\newcommand{\customlabel}[2]{%
   \protected@write \@auxout {}{\string \newlabel {#1}{{#2}{\thepage}{#2}{#1}{}} }%
   \hypertarget{#1}{#2}
}
\renewenvironment{abstract}{\minisec{Abstract}}{\par\vspace{.1in}}
\theoremstyle{plain}
\newtheorem{theorem}{Theorem}[section]
\newtheorem{corollary}[theorem]{Corollary}
\newtheorem{lemma}[theorem]{Lemma}
\newtheorem{assumption}[theorem]{Assumption}
\theoremstyle{remark}
\newcommand{\TheTitle}{Error estimates for the finite element approximation of bilinear boundary control problems}
\newcommand{\lnh}{\lvert\ln h\rvert}
\title{\TheTitle}
\author{Max Winkler\thanks{Chemnitz University of Technology, Faculty of
  Mathematics, Professorship Numerical Mathematics (Partial Differential
  Equations), Stra\ss e der Nationen 62, 09111 Chemnitz, Germany. max.winkler@mathematik.tu-chemnitz.de}
}
\date{April 16, 2018}
\DeclareMathOperator{\sgn}{sgn}
\DeclareMathOperator{\argmax}{arg\,max}
\begin{document}

\maketitle

\begin{abstract}
  In this article a special class of nonlinear optimal control problems
  involving a bilinear term in the boundary condition is studied. These kind of problems
  arise for instance in the identification of an unknown space-dependent Robin
  coefficient from a given measurement of the state, or when the Robin
  coefficient can be controlled in order to reach a desired state.
  To this end, necessary and sufficient optimality conditions are derived and
  several discretization approaches for the numerical solution
  the optimal control problem are investigated.
  Considered are both a full discretization and the postprocessing approach
  meaning that we compute an improved control by a pointwise evaluation of the
  first-order optimality condition. For both approaches finite element error
  estimates are shown and the validity of these results is confirmed by numerical experiments.
  \keywords{Bilinear boundary control, identification of Robin
    parameter,
    finite element error estimates,
    postprocessing approach}

  \AMS{35J05, 35Q93, 49J20, 65N21, 65N30}
\end{abstract}

\section{Introduction}

This paper is concerned with bilinear boundary control problems of the form
\begin{equation*}
\frac12 \|y-y_d\|_{L^2(\Omega)}^2 + \frac\alpha2\|u\|_{L^2(\Gamma)}^2 \to \min!
\end{equation*}
subject to
\begin{equation*}
  \begin{aligned}
    -\Delta y + y &= f &&\mbox{in}\ \Omega,\\
    \partial_n y + u\,y &= g && \mbox{on}\ \Gamma,
  \end{aligned}
\end{equation*}
\begin{equation*}
  u\in U_{ad}:=\{v\in L^2(\Gamma)\colon u_a \le u \le u_b\ \mbox{a.\,e.\ on}\ \Gamma\},
\end{equation*}
where $\Omega\subset\mathbb R^n$, $n\in\{2,3\}$, is a bounded domain, $\alpha>0$ is the regularization parameter, $y_d\in L^2(\Omega)$ is a desired state
and $0 \le u_a < u_b$ are the control bounds.

As an application of bilinear boundary control problems
we mentioned the identification of an unknown Robin coefficient
from a given measurement $y_d$ of the state quantity.
This is for instance of interest in the modeling of stem cell division processes \cite{FRT16},
where $u$ is the unknown parameter describing the chemical reactions between proteins
from the cell interior and the cell cortex. For further applications,
$u$ can be interpreted as a heat-exchange
coefficient in thermodynamics or as a quantity for corrosion damage
in electrostatics.
There are many publications dealing with the identification of the Robin
coefficient, see for instance \cite{Chaabane1999,Hetmaniok2017,LiuNakamura2017,MohebbiSellier2018}.
Only a few papers use an optimal control approach similar to the one  considered in the
present article.
We mention \cite{JL12,HaoThanhLesnic2013}, where the parabolic version of our model problem
is considered.
The authors prove convergence of a finite element approximation but no
convergence rate is established. A similar problem is discussed in 
\cite{Gwinner2018}, dealing with the recovery of the Robin parameter in a variational
inequality.

The aim of the present paper is to derive necessary and sufficient optimality conditions
for the optimal control problem and to investigate several numerical approximations
regarding convergence towards a local solution.
This complements a previous contribution of Kr\"oner and Vexler
\cite{KV09} where the distributed control case, meaning that the bilinear term
$u\,y$ appears in the differential equation, is discussed. In this article
error estimates for the approximate controls in the $L^2(\Omega)$-norm are derived 
for several finite element approximations. To this end, the authors approximate the
state and adjoint state by linear finite elements and 
derive the convergence rate $1$ for piecewise constant and $3/2$ for piecewise linear
approximations for the control.  Moreover, advanced discretization concepts like
the postprocessing approach \cite{MR11} and
the variational discretization \cite{Hin05} is investigated which allow an improvement
up to a convergence rate of $2$. It is the purpose of the present article to extend the results
to the case of boundary control.

The numerical analysis of boundary control problems 
is usually more difficult than for distributed control problems as the adjoint control--to--state
operator maps onto some Sobolev/Lebes\-gue space defined on the boundary.
As a consequence, error estimates for the traces of finite element solutions
have to be proved, more precisely, in the $L^2(\Gamma)$-norm.
Here, we consider two different discretization approaches. The first one is a full
discretization using piecewise linear finite elements for the states and
piecewise constant functions on the boundary for the control approximation.
Under the assumption that the domain has a Lipschitz boundary we show that the
discrete optimal control converges with the optimal rate $1$. To show this result we
exploit the local coercivity of the objective, best-approximation properties
of the control space and suboptimal error estimates for the state and adjoint equation.
In order to obtain a more accurate solution we also investigate the
postprocessing approach
where an improved control is computed by a pointwise application of the
first-order optimality condition to the discrete state
variables. For this approach we have to assume more regularity for the exact
solution and thus, we restrict our considerations to two-dimensional domains
with sufficiently smooth boundary. Under this assumption we show the optimal
convergence rate of $2-\varepsilon$ with arbitrary $\varepsilon>0$
which is the rate one would
also expect in the case of linear quadratic boundary control problems
and smooth solutions \cite{APR13,APW17,MR04} (even with $h^{-\varepsilon}$
replaced by $\lnh$, where $h$ is the maximal element diameter of the finite
element mesh).
The proof relies on the non-expansivity of the projection onto the feasible
set as well as sharp error estimates for the state and adjoint state in
$L^2(\Gamma)$. To obtain estimates in these norms superconvergence properties
of the midpoint interpolant, finite element error estimates for the Ritz
projection in $L^2(\Gamma)$ and a supercloseness result between the
midpoint interpolant of the exact and the discrete solution are exploited.
To show the $L^2(\Gamma)$-norm error estimate we will, as we consider smooth solutions,
derive a maximum norm estimate.
To the best of the authors knowledge these results are not available in the literature
for problems with Robin boundary conditions. Based on the
ideas from \cite{FR76} we formulate the missing proof.

We moreover note that the setting discussed here does not fit into the
well-known framework of the semilinear optimal control problems discussed
e.\,g.\ in \cite{ACT02,CM08,CMT05,KP14}, as these contributions deal with nonlinearities
depending solely on the state variable. However, many techniques can be reused for the problem
considered here. The only publication where more
general nonlinearities depending both on 
the state and the control variable is, to the best of the authors knowledge,
\cite{RoeschTroeltzsch1992}. Therein optimality conditions are discussed
but there is no theory on the numerical analysis of approximation methods
for this problem class available yet. However, we think that the consideration of
bilinear control problems may serve as a starting point for the investigation of a more general
class of nonlinear optimal control problems.

The article is structured as follows. In Section~\ref{sec:state_equation} we
discuss the solubility of the state equation and regularity results for its solution.
In Section~\ref{sec:optimal_control} we analyze the optimal
control problem. In particular, necessary and sufficient optimality conditions
are investigated. Section \ref{sec:fem} is devoted to the finite element
discretization of the state equation, where we show finite element error estimates
required for the numerical analysis of the optimal control problem later.
The discretization of the optimal control problem is considered in
Section~\ref{sec:discrete_optimal_control}.
In particular, we discuss convergence
rates for the numerical solution obtained by a full discretization of the
optimal control problem as well as for an improved control obtained by a
postprocessing step. The latter result requires some auxiliary results that we
discuss in the appendix.
To be more precise, a maximum norm error estimate for the finite element solution
of an elliptic equation with Robin boundary conditions is needed. A proof is
given in Appendix \ref{app:maximum_norm_estimate}.
Moreover, a proof of local error estimates for
the midpoint interpolant and the $L^2(\Gamma)$ projection onto piecewise
constant functions on the boundary is needed.
To the best of the authors knowledge these results are not available in the
literature in case of domains with curved boundaries. Thus, we discuss these
auxiliary results in Appendix \ref{app:midpoint}.
Finally, we will compare the theoretical results with numerical experiments
in Section~\ref{sec:experiments}.

\section{Analysis of the state equation}
\label{sec:state_equation}

We consider the boundary value problem 
\begin{equation*}
 -\Delta y + y = f \ \mbox{in}\ \Omega,\qquad \partial_n y + u\,y=g\ \mbox{on}\ \Gamma,
\end{equation*}
on a bounded Lipschitz domain $\Omega\subset\mathbb R^n$, $n\in\{2,3\}$, with data $f\in L^2(\Omega)$ and $g\in L^2(\Gamma)$.
The corresponding weak formulation reads
\begin{equation}\label{eq:weak_form}
  \text{Find}\ y\in H^1(\Omega)\colon\qquad a_u(y,v) = F(v)\qquad\forall v\in H^1(\Omega),
\end{equation}
with
\begin{align*}
  a_u(y,v)&:=(\nabla y,\nabla v)_{L^2(\Omega)} + (y,v)_{L^2(\Omega)} + (u\,y,v)_{L^2(\Gamma)},\\
  F(v) &:= (f,v)_{L^2(\Omega)} + (g,v)_{L^2(\Gamma)}.
\end{align*}
First, we show an existence and uniqueness result for \eqref{eq:weak_form}.
Therefore, we introduce a decomposition of the control into a positive and negative part $u^+,u^-\in L_+^2(\Gamma):=\{v\in L^2(\Gamma)\colon v\ge 0\mbox{\ a.\,e.\ on}\ \Gamma\}$ 
such that $u=u^+ - u^-$. The following result then relies on the Lax-Milgram-Lemma. However,
an assumption on the coefficient $u$ is required.
\begin{lemma}\label{lem:lax_milgram}
  Assume that $u\in L^2(\Gamma)$ satisfies
  \begin{equation}\label{eq:ass_coercivity}
    \|u^-\|_{L^2(\Gamma)} < \frac1{c_*^2}.
  \end{equation}
  with the constant $c^*$ which is due to the embedding $\|v\|_{L^4(\Gamma)} \le c^*\|v\|_{H^1(\Omega)}$.
  Then, the solution $y$ of \eqref{eq:weak_form} belongs to $H^1(\Omega)$
  and satisfies the a priori estimate
  \begin{equation*}
    \|y\|_{H^1(\Omega)}\le \frac{1}{\gamma_u}\,\left(\|f\|_{H^1(\Omega)^*} + \|g\|_{H^{-1/2}(\Gamma)}\right).
  \end{equation*}
  with $\gamma_u := 1-c_*^2\,\|u^-\|_{L^2(\Gamma)}>0$.
\end{lemma}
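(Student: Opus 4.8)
The plan is to apply the Lax--Milgram lemma to the bilinear form $a_u$ and the linear functional $F$ on the Hilbert space $H^1(\Omega)$, which requires checking boundedness of $F$, continuity of $a_u$, and coercivity of $a_u$; the a priori estimate then falls out directly from the coercivity constant. Boundedness of $F$ is routine: the volume term obeys $(f,v)_{L^2(\Omega)}\le\|f\|_{H^1(\Omega)^*}\,\|v\|_{H^1(\Omega)}$ since $L^2(\Omega)\hookrightarrow H^1(\Omega)^*$, while the boundary term is handled by the trace theorem through $(g,v)_{L^2(\Gamma)}\le\|g\|_{H^{-1/2}(\Gamma)}\,\|v\|_{H^{1/2}(\Gamma)}\le\|g\|_{H^{-1/2}(\Gamma)}\,\|v\|_{H^1(\Omega)}$. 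Combining these gives $\|F\|_{H^1(\Omega)^*}\le\|f\|_{H^1(\Omega)^*}+\|g\|_{H^{-1/2}(\Gamma)}$, the trace constant being absorbed into the definition of the boundary norm.

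For the continuity of $a_u$ the first two terms are bounded trivially by $\|y\|_{H^1(\Omega)}\,\|v\|_{H^1(\Omega)}$, and the boundary term I would estimate by Hölder's inequality followed by the stated embedding,
\[
  |(u\,y,v)_{L^2(\Gamma)}|\le\|u\|_{L^2(\Gamma)}\,\|y\|_{L^4(\Gamma)}\,\|v\|_{L^4(\Gamma)}\le c_*^2\,\|u\|_{L^2(\Gamma)}\,\|y\|_{H^1(\Omega)}\,\|v\|_{H^1(\Omega)},
\]
so that $a_u$ is bounded with a constant depending on $\|u\|_{L^2(\Gamma)}$.

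The main obstacle, and the only point where assumption \eqref{eq:ass_coercivity} is used, is coercivity, because the boundary term $(u\,y,y)_{L^2(\Gamma)}$ need not be nonnegative. Testing with $v=y$ gives
\[
  a_u(y,y)=\|y\|_{H^1(\Omega)}^2+\int_\Gamma u\,y^2\,\mathrm{d}s.
\]
Here I would insert the decomposition $u=u^+-u^-$: the nonnegative part yields $\int_\Gamma u^+y^2\,\mathrm{d}s\ge0$ and is simply dropped, while the negative part is controlled by the same Hölder/embedding chain,
\[
  \int_\Gamma u^-y^2\,\mathrm{d}s\le\|u^-\|_{L^2(\Gamma)}\,\|y\|_{L^4(\Gamma)}^2\le c_*^2\,\|u^-\|_{L^2(\Gamma)}\,\|y\|_{H^1(\Omega)}^2,
\]
using $\|y^2\|_{L^2(\Gamma)}=\|y\|_{L^4(\Gamma)}^2$. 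This produces $a_u(y,y)\ge\bigl(1-c_*^2\,\|u^-\|_{L^2(\Gamma)}\bigr)\,\|y\|_{H^1(\Omega)}^2=\gamma_u\,\|y\|_{H^1(\Omega)}^2$, and \eqref{eq:ass_coercivity} guarantees $\gamma_u>0$.

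With the three hypotheses verified, the Lax--Milgram lemma delivers a unique $y\in H^1(\Omega)$ solving \eqref{eq:weak_form} together with $\|y\|_{H^1(\Omega)}\le\gamma_u^{-1}\,\|F\|_{H^1(\Omega)^*}$; substituting the bound for $\|F\|_{H^1(\Omega)^*}$ from the first step yields the asserted estimate. The one point worth verifying carefully is that the critical exponent $4$ in the embedding $\|v\|_{L^4(\Gamma)}\le c_*\|v\|_{H^1(\Omega)}$ is admissible in both dimensions $n\in\{2,3\}$, which holds because on a Lipschitz boundary the trace lies in $H^{1/2}(\Gamma)$ and $H^{1/2}(\Gamma)\hookrightarrow L^4(\Gamma)$ whenever $\dim\Gamma\le2$; for $n=3$ this is exactly the borderline case.
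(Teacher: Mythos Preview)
Your proof is correct and follows essentially the same route as the paper: boundedness of $a_u$ via H\"older and the embedding $H^1(\Omega)\hookrightarrow L^4(\Gamma)$, coercivity via the decomposition $u=u^+-u^-$ with the same embedding to absorb the negative part, then Lax--Milgram. Your treatment is slightly more complete in that you spell out the bound on $F$ and justify the $L^4(\Gamma)$ embedding in both dimensions, but the argument is the same.
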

\begin{proof}
  The boundedness of $a_u$ follows directly from the Cauchy-Schwarz inequality and
  the embedding $H^1(\Omega)\hookrightarrow L^4(\Gamma)$. This implies.
  \begin{align*}
    a(y,z) &\le \|y\|_{H^1(\Omega)}\,\|z\|_{H^1(\Omega)}
             + \|u\|_{L^2(\Gamma)}\,\|y\|_{L^4(\Gamma)}\, \|z\|_{L^4(\Gamma)} \\
           &\le \left(1+c_*^2\,\|u\|_{L^2(\Gamma)}\right)\,\|y\|_{H^1(\Omega)}\, \|z\|_{H^1(\Omega)}.
  \end{align*}
  To show the coercivity we take into account the decomposition $u=u^+ - u^-$
  and the embedding $H^1(\Omega)\hookrightarrow L^4(\Gamma)$ to get
  \begin{align*}
    a(y,y) & \ge \|y\|_{H^1(\Omega)}^2 + \int_\Gamma u^-\,y^2 \ge \left(1-c_*^2\,\|u^-\|_{L^2(\Gamma)}\right)\,\|y\|_{H^1(\Omega)}^2.
  \end{align*}
  Here, the assumption \eqref{eq:ass_coercivity} will ensure the coercivity.
  An application of the Lax-Milgram Lemma leads to the desired result.
\end{proof}
Note that $\{v\in L^2(\Gamma)\colon \|v^-\|_{L^2(\Omega)} < c_*^{-2}\}$
is an open subset of $L^2(\Gamma)$. This is the key idea which allows us to
avoid the two-norm discrepancy for the optimal control problem
as we will see that the reduced objective functional is differentiable with respect
to the $L^2(\Gamma)$-topology. In the following we will hide the dependency of the estimates
on $\|u^-\|_{L^2(\Gamma)}$ and thus $\gamma_u$ in the generic constant as we impose
positive control bounds in the considered optimal control problem.

Later, we will frequently make use of the following Lipschitz estimate.
\begin{lemma}\label{lem:lipschitz_general}
  If $u_1,u_2\in L^2(\Gamma)$ satisfy the assumption \eqref{eq:ass_coercivity},
  the corresponding states $y_1,y_2\in H^1(\Omega)$ solving 
  \begin{equation*}
    a_{u_i}(y_i,v) = (f_i,v) + (g_i,v)_\Gamma  \quad\forall v\in H^1(\Omega),\ i=1,2,
  \end{equation*}
  fulfill the estimate
  \begin{align*}
    \|y_1-y_2\|_{H^1(\Omega)} &\le \big(\|u_1-u_2\|_{L^2(\Gamma)}\,\|y_2\|_{H^1(\Omega)} \\
    &\quad + \|f_1-f_2\|_{H^1(\Omega)^*}
      + \|g_1-g_2\|_{H^{-1/2}(\Gamma)}\big).
  \end{align*}
\end{lemma}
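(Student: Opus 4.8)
The plan is to subtract the two weak formulations and test with the difference of the states, reducing everything to an application of the coercivity estimate from Lemma~\ref{lem:lax_milgram}.

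First I would write $e := y_1 - y_2$ and subtract the two defining identities. The subtlety is that the two states solve equations with \emph{different} bilinear forms $a_{u_1}$ and $a_{u_2}$, so I cannot simply subtract the left-hand sides. The standard trick is to add and subtract a common term: for any test function $v$,
\begin{align*}
  a_{u_1}(y_1,v) - a_{u_1}(y_2,v)
  &= (f_1-f_2,v) + (g_1-g_2,v)_\Gamma + a_{u_2}(y_2,v) - a_{u_1}(y_2,v).
\end{align*}
By linearity of $a_u$ in its first argument the left-hand side equals $a_{u_1}(e,v)$, while the last two terms on the right collapse, using the definition of $a_u$, to
\begin{equation*}
  a_{u_2}(y_2,v) - a_{u_1}(y_2,v) = \big((u_2-u_1)\,y_2,\,v\big)_{L^2(\Gamma)}.
\end{equation*}
Thus $e$ solves $a_{u_1}(e,v) = (f_1-f_2,v) + (g_1-g_2,v)_\Gamma + ((u_2-u_1)y_2,v)_\Gamma$ for all $v$, which is exactly a problem of the form \eqref{eq:weak_form} for the coefficient $u_1$ with modified right-hand side.

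Next I would apply the a priori estimate of Lemma~\ref{lem:lax_milgram} to $e$, noting that $u_1$ satisfies the coercivity assumption \eqref{eq:ass_coercivity} by hypothesis, so $\gamma_{u_1}>0$. It remains to bound the three source contributions in the appropriate dual norms. The terms $\|f_1-f_2\|_{H^1(\Omega)^*}$ and $\|g_1-g_2\|_{H^{-1/2}(\Gamma)}$ appear directly. For the bilinear coupling term I would estimate the functional $v\mapsto((u_2-u_1)y_2,v)_\Gamma$ in $H^1(\Omega)^*$ (or equivalently absorb it into the $g$-type data): using Cauchy--Schwarz on $\Gamma$ together with the embedding $H^1(\Omega)\hookrightarrow L^4(\Gamma)$ twice gives
\begin{equation*}
  \big|((u_2-u_1)y_2,v)_\Gamma\big|
  \le \|u_1-u_2\|_{L^2(\Gamma)}\,\|y_2\|_{L^4(\Gamma)}\,\|v\|_{L^4(\Gamma)}
  \le c_*^2\,\|u_1-u_2\|_{L^2(\Gamma)}\,\|y_2\|_{H^1(\Omega)}\,\|v\|_{H^1(\Omega)},
\end{equation*}
so this contributes $\|u_1-u_2\|_{L^2(\Gamma)}\,\|y_2\|_{H^1(\Omega)}$ up to a constant.

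Collecting the three bounds and absorbing the factor $1/\gamma_{u_1}$ together with $c_*^2$ into the generic constant (as announced in the remark following Lemma~\ref{lem:lax_milgram}, since positive control bounds control $\gamma_u$ uniformly) yields exactly the claimed Lipschitz estimate. I do not expect any serious obstacle here: the only point requiring care is the bookkeeping of the two different bilinear forms in the first step, and making sure the coupling term is estimated against $\|y_2\|_{H^1(\Omega)}$ rather than $\|y_1\|_{H^1(\Omega)}$ — which is automatic once one chooses to add and subtract $a_{u_1}(y_2,\cdot)$ rather than $a_{u_2}(y_1,\cdot)$.
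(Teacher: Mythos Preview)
Your proof is correct and follows essentially the same approach as the paper: subtract the two weak formulations, absorb the mismatch between $a_{u_1}$ and $a_{u_2}$ into the boundary term $((u_2-u_1)y_2,\cdot)_\Gamma$, and apply the a~priori estimate of Lemma~\ref{lem:lax_milgram}. The only cosmetic difference is that the paper invokes the continuity of the product map $L^2(\Gamma)\times H^{1/2}(\Gamma)\to H^{-1/2}(\Gamma)$ from \cite[Theorem~1.4.4.2]{Gri85} for the coupling term, whereas you obtain the same bound directly via the $H^1(\Omega)\hookrightarrow L^4(\Gamma)$ embedding.
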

\begin{proof}
  Subtracting the variational formulations for $y_1$ and $y_2$ from each other leads to
  \begin{align*}
    & (\nabla(y_1-y_2),\nabla v)_{L^2(\Omega)} + (y_1-y_2,v)_{L^2(\Omega)} + (u_1\,(y_1-y_2),v)_{L^2(\Gamma)} \\
    &\qquad= (f_1-f_2,v)_{L^2(\Omega)} + (g_1-g_2,v)_{L^2(\Gamma)} + ((u_2-u_1)\,y_2,v)_{L^2(\Gamma)}.
  \end{align*}
  The result follows from Lemma \ref{lem:lax_milgram} and the continuity of
  the product mapping from $L^2(\Gamma)\times H^{1/2}(\Gamma)$ to $H^{-1/2}(\Gamma)$, see
  \cite[Theorem 1.4.4.2]{Gri85}.  
\end{proof}
In the following theorem we collect some regularity results for the solution of \eqref{eq:weak_form}.
\begin{lemma}\label{lem:props_S}
  Let $\Omega\subset\mathbb R^n$, $n\in\{2,3\}$, be a bounded Lipschitz
  domain. By $y\in H^1(\Omega)$ we denote the solution of \eqref{eq:weak_form}.  
  The following a priori estimates are valid, under the assumption that the
  input data possess the regularity demanded by the right-hand side:%
  \begin{align*}
    \intertext{a) If  $r>2n/(1+n)$ and $p > 2$ for $n=2$ and $p\ge 4$ for $n=3$, then} 
    \|y\|_{H^{3/2}(\Omega)} + \|y\|_{H^1(\Gamma)}
    &\le c\left(1+\|u\|_{L^p(\Gamma)}\right)\left( \|f\|_{L^{r}(\Omega)} + \|g\|_{L^2(\Gamma)}
     \right).\\
        \intertext{b) If  $r > n/2,\ s > n-1,$ and $p\ge2$ for $n=2$ and $p > 8/3$ for $n=3$, then}
    \|y\|_{C(\overline\Omega)}
    &\le c \left(1+\|u\|_{L^{p}(\Gamma)}\right)^2\left(\|f\|_{L^{r}(\Omega)}
      + \|g\|_{L^{s}(\Gamma)}\right). \\
    \intertext{c) Furthermore, if $\Omega$ is a convex polygonal/polyhedral
    domain, or possesses a boundary which is of class $C^{1,1}$, there holds}
    \|y\|_{H^2(\Omega)} &\le c\left(1+\|u\|_{H^{1/2}(\Gamma)}\right)^2\,\left(\|f\|_{L^2(\Omega)} + \|g\|_{H^{1/2}(\Gamma)}\right).
  \end{align*}
\end{lemma}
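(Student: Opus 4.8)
The plan is to reduce each of the three estimates to a \emph{linear} elliptic regularity result by treating the bilinear boundary term as a perturbation of the data. Writing the Robin condition $\partial_n y + u\,y = g$ in the equivalent form $\partial_n y = \tilde g$ with the modified Neumann datum $\tilde g := g - u\,y$, the state $y$ solves the linear problem $-\Delta y + y = f$ in $\Omega$, $\partial_n y = \tilde g$ on $\Gamma$. For this linear problem the sought regularity is classical in each of the three settings; the only nonstandard ingredient is to control $\tilde g$, i.e.\ the product $u\,y$, in the norm demanded by the corresponding linear estimate. I would therefore proceed by a bootstrapping argument that starts from the $H^1(\Omega)$ a priori bound of Lemma~\ref{lem:lax_milgram} (which already controls the trace $y|_\Gamma$ in $L^4(\Gamma)$ through the embedding $H^1(\Omega)\hookrightarrow L^4(\Gamma)$ defining $c_*$) and successively upgrades the integrability of $y|_\Gamma$. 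Each multiplication by $u$ costs one factor $\|u\|_{L^p(\Gamma)}$, which explains the powers $1$, $2$, $2$ appearing in (a), (b), (c).

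For part~(a) I would invoke the $H^{3/2}(\Omega)$ regularity for the Neumann problem on a Lipschitz domain (of Jerison--Kenig type), whose trace lies in $H^1(\Gamma)$, together with the embedding $L^r(\Omega)\hookrightarrow H^{1/2}(\Omega)^*$ into the dual space required by the $H^{3/2}$ estimate, which holds precisely for $r > 2n/(n+1)$ and accounts for the stated condition on $f$. It remains to bound $\|u\,y\|_{L^2(\Gamma)}$: by Hölder's inequality $\|u\,y\|_{L^2(\Gamma)} \le \|u\|_{L^p(\Gamma)}\,\|y\|_{L^{2p/(p-2)}(\Gamma)}$, and the trace of $y\in H^1(\Omega)$ lies in every $L^q(\Gamma)$, $q<\infty$, for $n=2$, but only in $L^4(\Gamma)$ for $n=3$. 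Requiring $2p/(p-2)\le 4$ forces $p\ge 4$ when $n=3$, whereas any $p>2$ suffices when $n=2$ --- exactly the hypotheses stated. This yields $\tilde g\in L^2(\Gamma)$ with a bound linear in $(1+\|u\|_{L^p(\Gamma)})$, and the linear estimate closes the argument.

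Part~(b) is obtained by a bootstrap that does \emph{not} rely on~(a) (indeed $p>8/3<4$ rules out using the $H^{3/2}$ step for $n=3$): from the $H^1(\Omega)$ bound the trace lies in $L^4(\Gamma)$, whence $u\,y\in L^{s_1}(\Gamma)$ with $1/s_1 = 1/p + 1/4$ by Hölder; an intermediate $L^q$-type (or $W^{1,q}$-type) regularity estimate for the linear Neumann problem then improves the boundary integrability of $y$, after which $\tilde g = g - u\,y\in L^s(\Gamma)$ with $s > n-1$ and the maximum-norm (De Giorgi/Stampacchia-type) estimate for the linear Neumann problem with $f\in L^r(\Omega)$, $r > n/2$, applies. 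Balancing the Hölder exponents against the trace embeddings across the iteration yields the conditions $p\ge 2$ for $n=2$ and $p > 8/3$ for $n=3$, and the two product estimates entering the chain account for the square $(1+\|u\|_{L^p(\Gamma)})^2$. For part~(c), on a convex polytope or a $C^{1,1}$ domain I would use the standard $H^2(\Omega)$ regularity of the Neumann problem, which requires $\tilde g\in H^{1/2}(\Gamma)$. Here I would combine the $H^1(\Gamma)$ trace bound from~(a) --- applicable since $H^{1/2}(\Gamma)\hookrightarrow L^p(\Gamma)$ for an admissible $p$, so $\|u\|_{L^p(\Gamma)}\le c\,\|u\|_{H^{1/2}(\Gamma)}$ --- with the multiplication property $H^{1/2}(\Gamma)\cdot H^1(\Gamma)\hookrightarrow H^{1/2}(\Gamma)$ to estimate $\|u\,y\|_{H^{1/2}(\Gamma)}\le c\,\|u\|_{H^{1/2}(\Gamma)}\,\|y\|_{H^1(\Gamma)}$; substituting the bound of~(a) again produces the square factor.

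The main obstacle I anticipate is bookkeeping rather than conceptual: one must track the Sobolev/Lebesgue exponents through each Hölder step and trace embedding and verify that the stated ranges of $r,s,p$ are exactly those for which every step closes --- in particular the $n=3$ cases, where the boundary trace of $H^1(\Omega)$ only reaches $L^4(\Gamma)$ and the intermediate iteration is needed to obtain the sharp bound $p>8/3$ in~(b). The two genuinely delicate analytic inputs are the sharp $H^{3/2}$ Neumann regularity on a merely Lipschitz domain, which must be quoted from the literature in the required generality, and the fractional multiplication estimate $H^{1/2}(\Gamma)\cdot H^1(\Gamma)\hookrightarrow H^{1/2}(\Gamma)$ used in part~(c), whose validity relies on $H^1(\Gamma)$ acting as a pointwise multiplier on $H^{1/2}(\Gamma)$ for boundaries of dimension $n-1\le 2$.
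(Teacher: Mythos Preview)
Your overall strategy matches the paper's: rewrite the Robin problem as a Neumann problem with datum $\tilde g = g - u\,y$, quote linear regularity for this problem, and control $u\,y$ in the appropriate norm via H\"older and trace embeddings, bootstrapping from the $H^1(\Omega)$ bound of Lemma~\ref{lem:lax_milgram}. Parts (b) and (c) are handled essentially as you describe; the paper makes the intermediate step in (b) explicit by passing through $H^{5/4+\varepsilon}(\Omega)$ (via a result of Dauge), whose trace lies in $L^8(\Gamma)$ and hence closes the H\"older chain exactly at $p>8/3$.

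There is, however, a genuine gap in your treatment of the $H^1(\Gamma)$ estimate in part~(a). You obtain $y\in H^{3/2}(\Omega)$ and then assert that ``its trace lies in $H^1(\Gamma)$''. On a merely Lipschitz domain this is the critical index: the trace map $H^s(\Omega)\to H^{s-1/2}(\Gamma)$ is known to be bounded only for $1/2<s<3/2$, and $s=3/2$ is precisely where it may fail. The paper circumvents this by splitting $y=y_f+y_g$, where $y_f$ carries the volume datum $f$ with homogeneous Neumann condition and $y_g$ carries the boundary datum $g-u\,y$ with zero right-hand side. For $y_g$ one quotes a direct $L^2(\Gamma)\to H^1(\Gamma)$ boundary regularity result for the Neumann problem (rather than going through the interior and tracing back); for $y_f$ one shows the slightly better interior regularity $y_f\in H^{3/2+\varepsilon}(\Omega)$, which places the trace safely above the critical line. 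Some device of this kind is needed; the one-line trace step does not close on a general Lipschitz domain.

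A smaller remark on (c): you invoke part~(a) to get $y\in H^1(\Gamma)$ and then the multiplication $H^{1/2}(\Gamma)\cdot H^1(\Gamma)\hookrightarrow H^{1/2}(\Gamma)$. The paper instead uses only the intermediate $H^{5/4+\varepsilon}(\Omega)$ bound to get $y|_\Gamma\in H^{3/4+\varepsilon}(\Gamma)$ before applying Grisvard's product rule and the $H^2$ shift theorem. Your route is legitimate once~(a) is repaired, and indeed furnishes a stronger multiplier; you are right to flag the product estimate as the borderline case when $\dim\Gamma=2$.
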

\begin{proof}
  a)
  In \cite[Theorem 1.12]{Dha12} it is shown that the problem
  \begin{equation*}
    -\Delta y = F\ \mbox{in}\ \Omega,\qquad \partial_n y = G \ \mbox{on}\ \Gamma
  \end{equation*}
  possesses a solution in $H^{3/2}(\Omega)$ provided that
  $F\in H^{s-2}(\Omega)$ for some $s\in (3/2,2]$ and $G\in L^2(\Gamma)$, as well as
  $\int_\Omega F + \int_\Gamma G = 0$. The solubility condition
  is satisfied in our situation with $F=f-y$ and $G=g-u\,y$ and becomes clear when testing
  \eqref{eq:weak_form} with $v\equiv 1$. The regularity required for $F$ follows from the embedding
  $f\in L^r(\Omega)\hookrightarrow H^{-1/2+\varepsilon}(\Omega)$ for sufficiently small
  $\varepsilon>0$.
  Moreover, the H\"older inequality and the embeddings
  $H^1(\Omega)\hookrightarrow L^q(\Gamma)$ with $q<\infty$ ($n=2$)
  or $H^1(\Omega)\hookrightarrow L^4(\Gamma)$ ($n=3$)
  imply $\|u\,y\|_{L^2(\Gamma)} \le c\,\|u\|_{L^p(\Omega)}\,\|y\|_{H^1(\Omega)}$, from which we conclude
  $G\in L^2(\Gamma)$. From \cite[Theorem 1.12]{Dha12} and Lemma \ref{lem:lax_milgram}
  we then obtain
  \begin{align}\label{eq:a_priori_h32}
    \|y\|_{H^{3/2}(\Omega)}
    &\le c \left( \|F\|_{L^r(\Omega)} + \|G\|_{L^2(\Gamma)} + \left\vert\int_\Omega y(x)\mathrm dx\right\vert\right) \nonumber\\
    &\le c \left(1+\|u\|_{L^p(\Gamma)}\right)\left(\|f\|_{L^r(\Omega)} + \|g\|_{L^2(\Gamma)}\right).
  \end{align}
        
  It remains to show the $H^1(\Gamma)$-norm estimate.
  We split the solution into the parts $y_f$ and $y_g$ solving
  \begin{equation*}
  \begin{aligned}
    -\Delta y_f + y_f &= f &\qquad -\Delta y_g + y_g &= 0 &\qquad& \mbox{in}\ \Omega,\\
    \partial_n y_f &= 0 & \partial_n y_g &= g - u y && \mbox{on}\ \Gamma.
  \end{aligned}
\end{equation*}
  Using \cite[Theorem 5.4]{GM11}  we directly deduce
  \begin{equation*}
    \|y_g\|_{H^1(\Gamma)} \le c\,\|g-u\,y\|_{L^2(\Gamma)} \le c \left(\|g\|_{L^2(\Gamma)} + \|u\|_{L^p(\Gamma)}\,\|y\|_{H^1(\Omega)}\right)
  \end{equation*}
  and Lemma \ref{lem:lax_milgram} leads to the desired estimate for $y_g$.
  For the function $y_f$, we get the desired estimate by an application of a trace theorem and the
  a priori estimate \eqref{eq:a_priori_h32} which can in case of $g\equiv 0$ be improved to
  \begin{equation*}
    \|y_f\|_{H^1(\Gamma)} \le c\,\|y_f\|_{H^{3/2+\varepsilon}(\Omega)}
    \le c\,\|f\|_{L^r(\Omega)},
  \end{equation*}
  provided that $\varepsilon >0$ is sufficiently small. The validity of the second step
  can be confirmed by means of \cite[Theorem 1.12]{Dha12} and \cite[Theorem 23.3]{Dau88}.
  The decomposition $y=y_f+y_g$ and the estimates shown above imply the desired estimate in
  the $H^1(\Gamma)$-norm.
  
  b) We prove the result for the case $n=3$. The two-dimensional case follows from the same arguments.
  From \cite[Theorem 3.1]{Cas93} it is known that the solution of \eqref{eq:weak_form}
  belongs to $C(\overline\Omega)$ if $f\in L^r(\Omega)$, $r>n/2$, and $g-uy\in L^s(\Gamma)$, $s>n-1$.
  The latter assumption can be concluded from the H\"older inequality, a Sobolev embedding and a trace theorem, which implies
  \[
    \|u\,y\|_{L^s(\Gamma)} \le c\,\|u\|_{L^p(\Gamma)}\,\|y\|_{L^8(\Gamma)}\le c\,\|u\|_{L^p(\Gamma)}\,\|y\|_{H^{5/4+\varepsilon}(\Omega)}
  \]
  for $1/p+1/8=1/(2+\varepsilon)$. A simple computation shows that $p>8/3$
  and $s=2+\varepsilon$ with $\varepsilon>0$ sufficiently small guarantee the validity of
  the previous steps.
  It remains to show $y\in H^{5/4+\varepsilon}(\Omega)$. This can be deduced from
  \cite[Theorem 23.3]{Dau88} where the a priori estimate  
  \begin{equation}\label{eq:h54_reg}
    \|y\|_{H^{5/4+\varepsilon}(\Omega)} \le c \left(\|f\|_{H^{3/4-\varepsilon}(\Omega)^*} + \|g - u\,y\|_{H^{-1/4+\varepsilon}(\Gamma)}\right)
  \end{equation}
  is stated.
  The regularity demanded by the right-hand side of \eqref{eq:h54_reg} is confirmed with the
  embeddings $f\in L^r(\Omega)\hookrightarrow H^{3/4-\varepsilon}(\Omega)^*$ and
  $g\in L^s(\Gamma)\hookrightarrow H^{-1/4+\varepsilon}(\Gamma)$.
  Moreover, there holds $\|u\,y\|_{H^{-1/4+\varepsilon}(\Gamma)} \le c\,\|u\|_{L^p(\Gamma)}\,\|y\|_{L^4(\Gamma)}$, see \cite[Theorem 1.4.4.2]{Gri85}.
  % \begin{equation*}
  %   \|u\,y\|_{H^{-1/4+\varepsilon}(\Gamma)} = \sup_{\genfrac{}{}{0pt}{}{\varphi\in H^{1/4-\varepsilon}(\Gamma)}{\varphi\ne 0}}\frac{\int_\Gamma u y \varphi}{\|\varphi\|_{H^{1/4-\varepsilon}(\Gamma)}}
  %   \le c \|u\|_{L^p(\Gamma)} \|y\|_{L^4(\Gamma)}
  % \end{equation*}
  % and the last step is a consequence of the embedding $H^{1/4-\varepsilon}(\Gamma)\hookrightarrow L^{p'}(\Gamma)$ with $p'$ chosen such that $1/p+1/4+1/p' = 1$.  
  Collecting up the arguments above leads to
  \begin{align*}
    \|y\|_{C(\overline\Omega)}
    &\le c \left(\|f\|_{L^r(\Omega)} + \|g\|_{L^s(\Gamma)} + \|u\|_{L^p(\Gamma)} \|y\|_{H^{5/4+\varepsilon}(\Omega)}\right) \\
    &\le c \left(1+\|u\|_{L^p(\Gamma)}\right)^2\left(\|f\|_{L^r(\Omega)} + \|g\|_{L^s(\Omega)}
      + \|y\|_{H^1(\Omega)}\right)
  \end{align*}
  and the assertion follows after insertion of the a priori estimate from Lemma \ref{lem:lax_milgram}.

  c)
  With an embedding we deduce from the assumption that $u\in L^4(\Gamma)$.
  Hence, \eqref{eq:h54_reg} is applicable which implies $y\in H^{3/4}(\Gamma)$ and thus, $u\,y\in H^{1/2}(\Gamma)$, see  \cite[Theorem 1.4.4.2]{Gri85}.
  The $H^2(\Omega)$-regularity of $y$ then follows from a shift theorem applied to the
  equation with boundary conditions $\partial_n y = g - u y\in H^{1/2}(\Gamma)$ on $\Gamma$,
  see \cite[Theorem 2.4.2.7]{Gri85} (for domains with smooth boundary) or \cite[Theorem 4.4.3.8]{Gri85} (for convex polygonal domains).
\end{proof}

\section{The optimal control problem}
\label{sec:optimal_control}

We introduce the control-to-state operator $S\colon U_{ad}\to H^1(\Omega)$
defined by $S(u):=y$, with $y$ the solution of \eqref{eq:weak_form}.
In this section we discuss the bilinear optimal control problem
\begin{equation}\label{eq:target}
  j(u):=\frac12\|S(u)-y_d\|_{L^2(\Omega)}^2 + \frac\alpha2\|u\|_{L^2(\Gamma)}^2 \to \min!
\end{equation}
subject to $u\in U_{ad}:=\{v\in L^2(\Gamma)\colon u_a\le v\le u_b\ \text{a.\,e.\ on}\ \Gamma\}$.
Here, $\alpha>0$ is the regularization parameter, $y_d\in L^2(\Omega)$ the desired state
and $0 < u_a < u_b$ the control bounds. 
Our aim is to derive necessary and sufficient optimality conditions as
well as regularity results for local solutions. Note, that the operator $S$ is non-affine and
consequently, $j$ is non-convex.

\subsection{Optimality conditions}

To derive optimality conditions differentiability properties of the (implicitly defined) operator $S$ are of interest.
\begin{lemma}\label{lem:differentiability}
 The operator $S\colon U_{ad}\to H^1(\Omega)$ is infinitely many times Fr\'echet differentiable with respect to the $L^2(\Gamma)$-topology.
 The first derivative $\delta y := S'(u)\delta u$ is the weak solution of
 \begin{equation}\label{eq:tangent_equation}
   \left\lbrace
     \begin{array}{rlll}
       -\Delta \delta y + \delta y &= 0 &\qquad&\mbox{in}\ \Omega,\\
       \partial_n\delta y + u\,\delta y &= -\delta u\,y &&\mbox{on}\ \Gamma.
     \end{array}
   \right.
 \end{equation}
\end{lemma}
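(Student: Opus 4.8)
The plan is to prove differentiability via the implicit function theorem applied to the operator equation defining $S$, which is the cleanest route given that $S$ is only implicitly defined. First I would introduce the map $e\colon L^2(\Gamma)\times H^1(\Omega)\to H^1(\Omega)^*$ given by $\langle e(u,y),v\rangle := a_u(y,v)-F(v)$, so that $e(u,S(u))=0$ for all admissible $u$. The key point is that $e$ is smooth jointly in $(u,y)$: it is affine in $y$, affine in $u$, and the only nontrivial term is the trilinear form $(u\,y,v)_{L^2(\Gamma)}$, which is a bounded multilinear map by the embedding $H^1(\Omega)\hookrightarrow L^4(\Gamma)$ exactly as used in Lemma~\ref{lem:lax_milgram}. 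Since $e$ is (jointly) a polynomial of degree two in its arguments, it is automatically $C^\infty$ with respect to the $L^2(\Gamma)$-topology on the control, and all derivatives of order three and higher vanish. This is precisely what makes the two-norm discrepancy disappear, as anticipated in the remark after Lemma~\ref{lem:lax_milgram}.

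Second, I would verify the invertibility hypothesis of the implicit function theorem. The partial derivative $e_y(u,y)\colon H^1(\Omega)\to H^1(\Omega)^*$ is the map $z\mapsto a_u(z,\cdot)$, which under the coercivity assumption \eqref{eq:ass_coercivity} is an isomorphism by Lemma~\ref{lem:lax_milgram} (this is exactly the Lax--Milgram statement). Hence the implicit function theorem applies on the open set $\{u\colon\|u^-\|_{L^2(\Gamma)}<c_*^{-2}\}$ and yields that $S$ is $C^\infty$, with derivative characterized by differentiating the identity $e(u,S(u))=0$.

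Third, I would identify the derivative concretely. Differentiating $a_u(S(u),v)=F(v)$ in the direction $\delta u$ gives, by the product and chain rules,
\begin{equation*}
  a_u(\delta y,v) + (\delta u\,S(u),v)_{L^2(\Gamma)} = 0 \qquad\forall v\in H^1(\Omega),
\end{equation*}
where $\delta y = S'(u)\delta u$, since $F$ does not depend on $u$. Rearranging, $\delta y$ solves $a_u(\delta y,v) = -(\delta u\,y,v)_{L^2(\Gamma)}$ for all $v$, which is exactly the weak form of \eqref{eq:tangent_equation}. One must check that the right-hand side $\delta u\,y$ defines an admissible functional in $H^1(\Omega)^*$; this follows again from $H^1(\Omega)\hookrightarrow L^4(\Gamma)$, giving $(\delta u\,y,v)_{L^2(\Gamma)}\le \|\delta u\|_{L^2(\Gamma)}\|y\|_{L^4(\Gamma)}\|v\|_{L^4(\Gamma)}$, so the tangent problem is well-posed by Lemma~\ref{lem:lax_milgram}.

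The main obstacle is not in any single step but in making the $L^2(\Gamma)$-differentiability rigorous without invoking the stronger $L^\infty$ regularity one usually needs for semilinear state-dependent nonlinearities. The crucial structural feature exploited here is that the nonlinearity $u\,y$ is \emph{bilinear} rather than a genuine superposition operator: the product map $L^2(\Gamma)\times H^{1/2}(\Gamma)\to H^{-1/2}(\Gamma)$ is bounded (see Lemma~\ref{lem:lipschitz_general} and \cite[Theorem 1.4.4.2]{Gri85}), so no Nemytskii-operator difficulties arise and the Fréchet derivatives exist in the weak $L^2$-topology directly. I would therefore take care to state explicitly that $e$ is multilinear-plus-affine and hence entire, so that higher derivatives are obtained simply by repeated differentiation, with $S''(u)(\delta u_1,\delta u_2)$ solving an analogous tangent equation with right-hand side built from $\delta u_1\,S'(u)\delta u_2 + \delta u_2\,S'(u)\delta u_1$ on the boundary.
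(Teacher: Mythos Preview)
Your proposal is correct and follows essentially the same route as the paper: both apply the implicit function theorem to the constraint map $e(u,y)=a_u(y,\cdot)-F$, use the bilinearity of the only nontrivial term $(u\,y,v)_{L^2(\Gamma)}$ together with the embedding $H^1(\Omega)\hookrightarrow L^4(\Gamma)$ to obtain $C^\infty$-smoothness of $e$, and invoke Lemma~\ref{lem:lax_milgram} for the invertibility of $e_y$. The only cosmetic difference is that the paper spells out the remainder estimate for the first Fr\'echet derivative explicitly, whereas you argue more abstractly that a degree-two polynomial map between Banach spaces is automatically entire; both are equally valid here.
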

\begin{proof}
 The result follows from an application of the implicit function theorem to
 the operator $e\colon H^1(\Omega)\times U  \to H^1(\Omega)^*$ with
 $U:=\{v\in L^2(\Gamma)\colon v\mbox{ fulfills \eqref{eq:ass_coercivity}}\}$ defined by
 \begin{equation*}
   e(y,u)v := (\nabla y,\nabla v)_{L^2(\Omega)} + (y,v)_{L^2(\Omega)} + (u\,y,v)_{L^2(\Gamma)} - (f,v)_{L^2(\Omega)} - (g,v)_{L^2(\Omega)},
 \end{equation*}
 whose roots are solutions of \eqref{eq:weak_form}.
 We choose $\delta y\in H^1(\Omega)$, $\delta u\in U$ such that
 $u+\delta u\in U$ (note that $U$ is an open subset of $L^2(\Gamma)$).
 First, we confirm that the linear operator $e'(y,u)\colon H^1(\Omega)\times U\to H^1(\Omega)^*$
 defined by
 \begin{equation}\label{eq:proof_derivative_S}
  e'(y,u)(\delta y, \delta u):=(\nabla \delta y,\nabla \cdot)_{L^2(\Omega)} + (\delta y,\cdot)_{L^2(\Omega)} + (u\,\delta y + y\,\delta u,\cdot)_{L^2(\Gamma)}
 \end{equation}
 is the Fr\'echet-derivative of $e$. This is a consequence of
 \begin{equation*}
  e(y+\delta y,u+\delta u) - e(y,u) = e'(y,u)(\delta y,\delta u) + (\delta u\,\delta y,\cdot)_{L^2(\Gamma)}
 \end{equation*}
 and the fact that the remainder term satisfies
 \begin{align}
   \label{eq:remainder_term}
   \|\delta u\,\delta y\|_{H^1(\Omega)^*}
   &= \sup_{\varphi\in H^1(\Omega)} \frac{(\delta u\,\delta y,\varphi)_{L^2(\Gamma)}}{\|\varphi\|_{H^1(\Omega)}}
     \le c \sup_{\varphi\in H^1(\Omega)} \frac{\|\varphi\|_{L^4(\Gamma)}}{\|\varphi\|_{H^1(\Omega)}}\,
     \|\delta u\|_{L^2(\Gamma)}\,\|\delta y\|_{L^4(\Gamma)}  \nonumber\\
   &\le c\,\|\delta u\|_{L^2(\Gamma)}\,\|\delta y\|_{H^1(\Omega)}
     \le c\left(\|\delta u\|_{L^2(\Gamma)}^2+\|\delta y\|_{H^1(\Omega)}^2\right) \nonumber\\
   &= o(\|(\delta y,\delta u)\|_{H^1(\Omega)\times L^2(\Gamma)}),
 \end{align}
 where we applied the generalized H\"older inequality and the embedding $H^1(\Omega)\hookrightarrow L^4(\Gamma)$.
 The second Fr\'echet derivative $e''\colon H^1(\Omega)\times U\to \mathcal L((H^1(\Omega)\times L^2(\Gamma))^2,H^1(\Omega)^*)$ is given by
 \begin{equation*}
  e''(y,u)(\delta y,\delta u)(\tau y,\tau u) := (\tau u\,\delta y + \delta u\,\tau y,\cdot)_{L^2(\Gamma)}
 \end{equation*}
 and the mapping $(y,u)\mapsto e''(y,u)$ is continuous.
 The derivatives of order $n\ge 3$ vanish.
 Hence, $e\colon H^1(\Omega)\times U\to H^1(\Omega)^*$ is of class $C^\infty$.
 
 Finally, due to Lemma \ref{lem:lax_milgram} we conclude that the linear mapping
 \[\delta y\mapsto e_y(y,u)\delta y = (\nabla \delta y,\nabla\cdot)_{L^2(\Omega)} + (\delta y,\cdot)_{L^2(\Omega)}+(u\,\delta y,\cdot)_{L^2(\Gamma)}\in H^1(\Omega)^*\]
 is bijective. The implicit function theorem implies the assertion and the derivative 
 $\delta y:=S'(u)\delta u$ is given by $e'(y,u)(\delta y,\delta u) = 0$.
 This corresponds to the weak formulation of \eqref{eq:tangent_equation}.
\end{proof}

From the chain rule and Lemma \ref{lem:differentiability} we directly conclude the following differentiability result
\begin{lemma}
 The functional $j\colon U_{ad}\to\mathbb R$ is infinitely many times Fr\'echet differentiable with respect to the $L^2(\Gamma)$-topology
 and the first derivative is given by
 \begin{equation}\label{eq:opt_cond_varineq}
  \left<j'(u),v\right> = (S(u)-y_d, S'(u)v)_{L^2(\Omega)} +
  \alpha\,(u,v)_{L^2(\Gamma)},
  \qquad v\in L^2(\Gamma).
 \end{equation}
 \end{lemma}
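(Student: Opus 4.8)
The plan is to obtain the differentiability of $j$ and the formula for $j'(u)$ as an almost immediate consequence of the two results already established, namely the infinite Fréchet differentiability of $S$ from Lemma~\ref{lem:differentiability} and the chain rule. First I would decompose the reduced objective as $j(u) = j_1(S(u)) + \frac{\alpha}{2}\|u\|_{L^2(\Gamma)}^2$, where $j_1\colon L^2(\Omega)\to\mathbb R$ is the smooth tracking functional $j_1(y) := \frac12\|y - y_d\|_{L^2(\Omega)}^2$. The strategy is to differentiate each of the two summands separately and then add the results.

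For the first summand I would argue that $j_1$ is infinitely Fréchet differentiable on $L^2(\Omega)$ with derivative $\langle j_1'(y), z\rangle = (y - y_d, z)_{L^2(\Omega)}$, which is the standard fact that a squared Hilbert-space norm is smooth. Since $S\colon U_{ad}\to H^1(\Omega)$ is $C^\infty$ with respect to the $L^2(\Gamma)$-topology by Lemma~\ref{lem:differentiability}, and the embedding $H^1(\Omega)\hookrightarrow L^2(\Omega)$ is bounded and linear (hence smooth), the composition $u\mapsto j_1(S(u))$ is $C^\infty$ by the chain rule, and its first derivative evaluated in direction $v$ is
\begin{equation*}
 \langle (j_1\circ S)'(u), v\rangle = \langle j_1'(S(u)), S'(u)v\rangle = (S(u) - y_d, S'(u)v)_{L^2(\Omega)}.
\end{equation*}
For the second summand, $u\mapsto \frac{\alpha}{2}\|u\|_{L^2(\Gamma)}^2$ is again a smooth quadratic form on $L^2(\Gamma)$ whose derivative is $v\mapsto \alpha\,(u,v)_{L^2(\Gamma)}$. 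Adding the two contributions yields precisely \eqref{eq:opt_cond_varineq}, and the $C^\infty$ property of $j$ follows since both summands are infinitely differentiable.

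There is essentially no hard step here; the content of the result lives entirely in Lemma~\ref{lem:differentiability}, and the present lemma is a formal application of the chain rule together with the smoothness of the quadratic terms. The only point requiring a word of care is that all differentiability is asserted with respect to the $L^2(\Gamma)$-topology on the control: one must check that the outer functional $j_1\circ(\text{embedding})$ is differentiable on the target space of $S$ in a way compatible with the $L^2(\Gamma)$-differentiability of $S$, which is immediate because $j_1'$ is a bounded linear functional on $L^2(\Omega)$ and $S'(u)$ maps $L^2(\Gamma)$ continuously into $H^1(\Omega)\hookrightarrow L^2(\Omega)$. This confirms that no two-norm discrepancy arises, consistent with the remark following Lemma~\ref{lem:lax_milgram}.
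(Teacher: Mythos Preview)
Your proof is correct and follows exactly the approach indicated by the paper, which states just before the lemma that the result follows directly from the chain rule and Lemma~\ref{lem:differentiability} and gives no further proof. You have simply spelled out the routine details of that chain rule argument.
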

 The optimality condition can be simplified using the adjoint of the linearized
 control-to-state operator, this is, 
 \[S'(u)^*\colon H^1(\Omega)^*\to L^2(\Gamma),\qquad
   S'(u)^*v:= -[S(u)\,p]_\Gamma\] with $p\in H^1(\Omega)$ solving the adjoint equation
 \begin{equation}
   \label{eq:adjoint_eq}
   \left\lbrace
     \begin{array}{rlll}
       -\Delta p + p &= v &\qquad&\mbox{in}\ \Omega,\\
       \partial_n p + u\,p &= 0 &&\mbox{on}\ \Gamma.
     \end{array}
   \right.
 \end{equation}
 In the following we denote the control-to-adjoint mapping $u\mapsto p=S'(u)^*(S(u)-y_d)$ with
 by $Z\colon L^2(\Gamma)\to H^1(\Omega)$.
 Consequently, we can rewrite the optimality condition \eqref{eq:opt_cond_varineq} as
 \begin{align}\label{eq:opt_cond}
   &
   \begin{aligned}
     -\Delta y + y &= f\qquad  & -\Delta p + p &= y-y_d &\qquad & \mbox{in}\ \Omega,\\
     \partial_n y + u\,y &= g & \partial_n p + u\,p &= 0 && \mbox{on}\ \Gamma,
   \end{aligned}\\[.3em]
   &\hspace{1cm}\left(\alpha\,u - y\,p,v-u\right)_{L^2(\Gamma)} \ge 0\hspace{1.95cm}\mbox{for all}\ v\in U_{ad}.   \nonumber
 \end{align}
 The variational inequality is equivalent to the projection formula
 \begin{equation}\label{eq:proj_formula}
   u = \Pi_{ad}\left(\frac1\alpha [y\,p]_\Gamma\right)
 \end{equation}
 with $\Pi_{ad}$ the $L^2(\Gamma)$-projection onto $U_{ad}$.
 
 To compute the second derivative of $j$ we need the solution
 $\delta y:=S'(u)\delta u\in H^1(\Omega)$ of the \emph{tangent equation}
 \begin{equation*}
   \left\lbrace
     \begin{array}{rlll}
       -\Delta \delta y + \delta y &= 0 &\quad&\mbox{in}\quad\Omega,\\
       \partial_n\delta y + u\,\delta y &= - y\,\delta u &&\mbox{on}\quad\Gamma,
     \end{array}
     \right.
   \end{equation*}
   and the solution $\delta p:=Z'(u)\delta u\in H^1(\Omega)$ of the \emph{dual for Hessian equation}
 \begin{equation*}
   \left\lbrace
     \begin{array}{rlll}
       -\Delta \delta p + \delta p &= \delta y &\qquad& \mbox{in}\quad\Omega,\\
       \partial_n\delta p + u\,\delta p &= -p\,\delta u &&\mbox{on}\quad\Gamma.
     \end{array}
   \right.
 \end{equation*}
 Then, the reduced Hessian in the directions $\delta u,\tau u\in L^2(\Gamma)$ then reads
 \begin{equation}\label{eq:second_deriv_j}
  j''(u)\left[\delta u,\tau u\right] = (\alpha\,\delta u - p\,\delta y - y\,\delta p,\tau u)_{L^2(\Gamma)}.
 \end{equation}
 Next, we derive some stability and Lipschitz properties of $S$, $Z$, $S'$ and $Z'$.
 As the following results require different assumptions on $f$, $y_d$ and $g$ we simply
 assume the most restrictive ones, this is, 
 \begin{equation*}
   f, y_d\in L^\infty(\Omega),\qquad g\in H^{1/2}(\Gamma).
 \end{equation*}
 Moreover, we will hide the dependency on these quantities in the generic constant
 to simplify the notation.
 \begin{lemma}\label{lem:stability_SZ}
   Let $u\in L^2(\Gamma)$ satisfy the assumption \eqref{eq:ass_coercivity}.
%   \begin{enumerate}[a)]
   The control-to-state operator $S$ satisfies the following inequalities:
   \begin{align*}
     \|S(u)\|_{H^1(\Omega)} &\le c,  &&\\
     \|S(u)\|_{H^{3/2}(\Omega)} + \|S(u)\|_{H^1(\Gamma)} &\le c\,(1+\|u\|_{L^{p_1}(\Gamma)}), \\
     \|S(u)\|_{L^\infty(\Omega)} &\le c\,(1+\|u\|_{L^{p_2}(\Gamma)})^2,
   \end{align*}
   with $p_1 > 2$ and $p_2\ge 2$ for $n=2$, and $p_1\ge 4$ and $p_2 > 8/3$ for $n=3$.
   The estimates remain valid when replacing the operator $S$ by the control-to-adjoint operator $Z$.
 \end{lemma}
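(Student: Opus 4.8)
I begin by observing that all three bounds are essentially already contained in the a priori estimates of Lemma~\ref{lem:props_S} and Lemma~\ref{lem:lax_milgram}. The plan is to apply these results once to the state equation \eqref{eq:weak_form} (to treat $S$) and once to the adjoint equation \eqref{eq:adjoint_eq} (to treat $Z$), after checking that the fixed data meet the respective hypotheses. Since $\Omega$ is bounded we have $f\in L^\infty(\Omega)\hookrightarrow L^r(\Omega)$ for every admissible $r$, and $g\in H^{1/2}(\Gamma)$ embeds into $L^2(\Gamma)$ as well as into $L^s(\Gamma)$ for a suitable $s$; hence every right-hand side norm occurring in Lemma~\ref{lem:props_S} is bounded by a constant depending only on the fixed data. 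The factor $\gamma_u^{-1}$ from Lemma~\ref{lem:lax_milgram} together with the norms of $f$, $g$ and $y_d$ are absorbed into the generic constant, as announced before the statement.

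For $S$ the argument is then direct. The first estimate is immediate from Lemma~\ref{lem:lax_milgram}, using $L^\infty(\Omega)\hookrightarrow H^1(\Omega)^*$ and $H^{1/2}(\Gamma)\hookrightarrow H^{-1/2}(\Gamma)$ to bound the right-hand side by a constant. The second estimate is part~a) of Lemma~\ref{lem:props_S}: choosing $r$ with $2n/(1+n)<r$ and employing $\|f\|_{L^r(\Omega)}\le c\,\|f\|_{L^\infty(\Omega)}$ and $\|g\|_{L^2(\Gamma)}\le c\,\|g\|_{H^{1/2}(\Gamma)}$, one identifies the exponent $p_1$ with the $p$ of that lemma. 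The third estimate follows from part~b) of Lemma~\ref{lem:props_S} with $p_2=p$; the only embedding that needs verification is $H^{1/2}(\Gamma)\hookrightarrow L^s(\Gamma)$ for some $s>n-1$, which holds for all finite $s$ when $n=2$ and for $s\in(2,4]$ when $n=3$, so an admissible $s>n-1$ always exists. Together with $C(\overline\Omega)\hookrightarrow L^\infty(\Omega)$ this yields the stated $L^\infty$-bound for $S$.

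For $Z$ I use that $Z(u)=p$ solves the Robin problem \eqref{eq:adjoint_eq}, which has exactly the structure of \eqref{eq:weak_form} but with interior source $S(u)-y_d$ and vanishing boundary datum; applying the same three results produces the claimed bounds once the source norm is controlled. The one point to watch, and the main subtlety of the proof, is that $\|S(u)-y_d\|_{L^r(\Omega)}$ must be estimated using only the uniform bound $\|S(u)\|_{H^1(\Omega)}\le c$ via the Sobolev embedding $H^1(\Omega)\hookrightarrow L^r(\Omega)$, and \emph{not} via the $L^\infty$-estimate for $S$. Each admissible exponent ($r>2n/(1+n)$ for the $H^{3/2}/H^1(\Gamma)$-bound, $r>n/2$ for the $L^\infty$-bound) can be taken within the range of that embedding (any finite $r$ for $n=2$, $r\le 6$ for $n=3$; e.g. $r=2$ works in both cases), so $\|S(u)-y_d\|_{L^r(\Omega)}\le c\,(\|S(u)\|_{H^1(\Omega)}+\|y_d\|_{L^\infty(\Omega)})\le c$. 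Inserting instead the $L^\infty$-estimate for $S(u)$ would introduce a spurious extra factor $(1+\|u\|_{L^{p_2}(\Gamma)})^2$ and destroy the stated powers of $(1+\|u\|)$. Since the adjoint boundary datum vanishes, no condition on $s$ is required for $Z$, and the three estimates for $Z$ follow with the same exponents $p_1,p_2$ as for $S$.
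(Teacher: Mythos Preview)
Your proof is correct and follows essentially the same approach as the paper: apply Lemmata~\ref{lem:lax_milgram} and~\ref{lem:props_S} to the state and adjoint equations, verifying that the fixed data $f,g,y_d$ have the required integrability, and for $Z$ bound $\|S(u)-y_d\|_{L^r(\Omega)}$ via the uniform $H^1(\Omega)$-estimate for $S(u)$ and a Sobolev embedding so that no additional powers of $(1+\|u\|)$ appear. Your write-up is in fact more detailed than the paper's, which merely sketches these steps.
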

 \begin{proof}
   The inequalities for $S$ are a direct consequence of Lemmata
   \ref{lem:lax_milgram} and \ref{lem:props_S}. The inequalities for $Z$ can
   be derived with similar arguments, but the right-hand side of the adjoint
   equation involves the corresponding state $S(u)$. However, in all cases the
   norms of $S(u)-y_d$ can be bounded by $c\,(1+\|S(u)\|_{H^1(\Omega)})\le c$.
 \end{proof}
 
 \begin{lemma}\label{lem:stability_SZ_prime}
   Given are $u,\delta u\in L^2(\Gamma)$ and it is assumed that $u$ 
   satisfies \eqref{eq:ass_coercivity}.
   Then, the following stability estimates hold true:   
   \begin{align*}
     \|S'(u)\delta u\|_{H^1(\Omega)} &\le c\,\|\delta u\|_{L^2(\Gamma)},\\
     \|S'(u)\delta u\|_{H^{3/2}(\Omega)} &\le c\left(1+\|u\|_{L^p(\Gamma)}\right)^3 \|\delta u\|_{L^2(\Gamma)},
   \end{align*}
   with $p>2$ for $n=2$ and $p\ge 4$ for $n=3$.
   The estimates remain valid when replacing $S'$ by $Z'$.
 \end{lemma}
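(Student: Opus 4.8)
The plan is to read off both estimates from the a priori bounds of Section~\ref{sec:state_equation} applied to the tangent problem. Writing $y:=S(u)$ and $\delta y:=S'(u)\delta u$, Lemma~\ref{lem:differentiability} tells us that $\delta y$ is the weak solution of \eqref{eq:tangent_equation}, that is, the state equation with right-hand side $f=0$ and boundary datum $g=-\delta u\,[y]_\Gamma$. Since $u$ satisfies \eqref{eq:ass_coercivity}, every estimate from Lemmata~\ref{lem:lax_milgram}, \ref{lem:props_S} and \ref{lem:stability_SZ} is at our disposal, and the whole proof reduces to bounding the boundary datum $-\delta u\,y$ in the norm demanded by the respective a priori estimate.

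For the $H^1(\Omega)$-bound I would apply Lemma~\ref{lem:lax_milgram} with $f=0$ and $g=-\delta u\,y$, which yields $\|\delta y\|_{H^1(\Omega)}\le c\,\|\delta u\,y\|_{H^{-1/2}(\Gamma)}$. The boundary datum is then estimated by the continuity of the product mapping $L^2(\Gamma)\times H^{1/2}(\Gamma)\to H^{-1/2}(\Gamma)$ from \cite[Theorem~1.4.4.2]{Gri85}, the trace theorem and the uniform bound $\|y\|_{H^1(\Omega)}=\|S(u)\|_{H^1(\Omega)}\le c$ from Lemma~\ref{lem:stability_SZ}; together these give $\|\delta u\,y\|_{H^{-1/2}(\Gamma)}\le c\,\|\delta u\|_{L^2(\Gamma)}\,\|y\|_{H^{1/2}(\Gamma)}\le c\,\|\delta u\|_{L^2(\Gamma)}$, which is the first claimed inequality.

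The $H^{3/2}(\Omega)$-bound follows the same pattern but uses Lemma~\ref{lem:props_S}~a) in place of Lemma~\ref{lem:lax_milgram}. That lemma contributes the prefactor $(1+\|u\|_{L^p(\Gamma)})$ and requires the datum $g=-\delta u\,y$ in $L^2(\Gamma)$. Here I would estimate by H\"older's inequality $\|\delta u\,y\|_{L^2(\Gamma)}\le\|\delta u\|_{L^2(\Gamma)}\,\|y\|_{L^\infty(\Gamma)}$ and insert the $L^\infty$-bound $\|y\|_{L^\infty(\Gamma)}\le\|S(u)\|_{C(\overline\Omega)}\le c\,(1+\|u\|_{L^{p_2}(\Gamma)})^2$ from Lemma~\ref{lem:stability_SZ}. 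Because $\Gamma$ is bounded we have $L^p(\Gamma)\hookrightarrow L^{p_2}(\Gamma)$ for the admissible exponents (for $n=2$ one may take $p_2=2<p$, for $n=3$ one has $8/3<p_2\le 4\le p$), so $\|u\|_{L^{p_2}(\Gamma)}\le c\,\|u\|_{L^p(\Gamma)}$. Multiplying the prefactor by this squared factor produces exactly the power $(1+\|u\|_{L^p(\Gamma)})^3$, which is the second claimed inequality; the bookkeeping of these powers is the only genuinely delicate point of the proof.

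For $Z'$ the argument is verbatim, only the data change. With $p:=Z(u)$, $\delta y:=S'(u)\delta u$ and $\delta p:=Z'(u)\delta u$, the function $\delta p$ solves the dual-for-Hessian equation, i.e.\ the state equation with $f=\delta y$ and $g=-p\,\delta u$. In the $H^1(\Omega)$-estimate the volume term contributes $\|\delta y\|_{H^1(\Omega)^*}\le c\,\|\delta y\|_{H^1(\Omega)}\le c\,\|\delta u\|_{L^2(\Gamma)}$ by the bound just proved for $S'$, while the boundary term $-p\,\delta u$ is treated exactly as above using $\|Z(u)\|_{H^1(\Omega)}\le c$. In the $H^{3/2}(\Omega)$-estimate one uses $\|\delta y\|_{L^r(\Omega)}\le c\,\|\delta y\|_{H^1(\Omega)}\le c\,\|\delta u\|_{L^2(\Gamma)}$ (the embedding $H^1(\Omega)\hookrightarrow L^r(\Omega)$ holds for the required $r>2n/(1+n)$) together with $\|p\,\delta u\|_{L^2(\Gamma)}\le c\,(1+\|u\|_{L^{p_2}(\Gamma)})^2\,\|\delta u\|_{L^2(\Gamma)}$, and the same power count as before yields the factor $(1+\|u\|_{L^p(\Gamma)})^3$.
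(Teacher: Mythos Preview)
Your proposal is correct and follows essentially the same route as the paper: apply Lemma~\ref{lem:lax_milgram} for the $H^1(\Omega)$-bound together with the multiplication estimate $\|\delta u\,y\|_{H^{-1/2}(\Gamma)}\le c\,\|\delta u\|_{L^2(\Gamma)}\,\|y\|_{H^1(\Omega)}$, and Lemma~\ref{lem:props_S}a) for the $H^{3/2}(\Omega)$-bound together with $\|\delta u\,y\|_{L^2(\Gamma)}\le\|\delta u\|_{L^2(\Gamma)}\,\|y\|_{L^\infty(\Omega)}$ and the $L^\infty$-stability of $S$ from Lemma~\ref{lem:stability_SZ}; the argument for $Z'$ is handled by the same machinery with the additional $\delta y$-term controlled via $H^1(\Omega)\hookrightarrow L^r(\Omega)$. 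Your bookkeeping of the exponents $p,p_2$ to arrive at the cubic factor is also exactly what the paper does implicitly.
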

 \begin{proof}
   In the following we write $y:=S(u)$ and $\delta y = S'(u)\delta u$.
   The stability in $H^1(\Omega)$ follows directly from Lemma \ref{lem:lax_milgram}
   and the estimate
   \begin{equation}\label{eq:multiplication_H-12}
     \|\delta u\,y\|_{H^{-1/2}(\Gamma)} = \sup_{\genfrac{}{}{0pt}{}{\varphi\in H^{1/2}(\Gamma)}{\varphi\not\equiv 0}} \frac{\left(\delta u \,y,\varphi\right)_{L^2(\Gamma)}}{\|\varphi\|_{H^{1/2}(\Gamma)}}
       \le c\,\|\delta u\|_{L^2(\Gamma)}\,\|y\|_{H^1(\Omega)},
     \end{equation}
     which follows from the same arguments used already in \eqref{eq:remainder_term}.
     The boundedness of $y:=S(u)$ in $H^1(\Omega)$ can be found in the previous Lemma.
     The estimate in the $H^{3/2}(\Omega)$-norm follows analogously with Lemma \ref{lem:props_S}a)
     and
     \[\|y\,\delta u\|_{L^2(\Gamma)}\le c\,\|y\|_{L^\infty(\Omega)}\,\|\delta u\|_{L^2(\Gamma)}\]
     and the stability in $L^\infty(\Omega)$ proved in Lemma \ref{lem:stability_SZ}.

     The estimates for $Z'$ are deduced with similar techniques.
     With the a priori estimate from Lemma \ref{lem:props_S}a)
     and the embedding $H^1(\Omega)\hookrightarrow L^r(\Omega)$ which holds for $r <\infty$ ($n=2$)
     or $r\le 6$ ($n=3$) we get
   \begin{align*}
     \|Z'(u)\delta u\|_{H^{3/2}(\Omega)} &\le c \left(1+\|u\|_{L^p(\Gamma)}\right)
     \left(\|p\,\delta u\|_{L^2(\Gamma)} + \|\delta y\|_{H^1(\Omega)}\right) \\
     &\le c\left(1+\|u\|_{L^p(\Gamma)}\right)
     \left(1+\|p\|_{L^\infty(\Gamma)}\right)\|\delta u\|_{L^2(\Gamma)}
   \end{align*}
   with $p=Z(u)$.
   The stability of $Z$ in $L^\infty(\Omega)$ is discussed in the previous Lemma.
 \end{proof}
 
 \begin{lemma}\label{lem:lipschitz_SZ}
  Let $u,v\in L^2(\Gamma)$ satisfy assumption \eqref{eq:ass_coercivity}. 
  Then, the following Lipschitz-estimates hold:
  \begin{align*}
    \|S(u) - S(v)\|_{H^1(\Omega)} &\le c\,\|u-v\|_{L^2(\Gamma)}, \\
    %\|Z(u) - Z(v)\|_{H^1(\Omega)} &\le c\,\|u-v\|_{L^2(\Gamma)},\\
    \|S'(u)\delta u - S'(v)\delta u\|_{H^1(\Omega)}
                                  &\le c\,\|u-v\|_{L^2(\Gamma)}\|\delta u\|_{L^2(\Gamma)}.
    %\|Z'(u)\delta u - Z'(v)\delta u\|_{H^1(\Omega)}
    %                              & \le c\,\|u-v\|_{L^2(\Gamma)} \|\delta u\|_{L^2(\Gamma)}.
  \end{align*}
  The estimates are also valid when replacing $S$ by $Z$ and $Z$ by $Z'$.
\end{lemma}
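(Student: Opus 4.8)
The plan is to obtain all four Lipschitz estimates from the abstract Lipschitz bound of Lemma~\ref{lem:lipschitz_general}, combined with the uniform stability bounds of Lemmata~\ref{lem:stability_SZ} and~\ref{lem:stability_SZ_prime} and the multiplication estimate \eqref{eq:multiplication_H-12}. The essential structural observation is that $S$, $S'$, $Z$ and $Z'$ are all solution operators of problems of the form $a_{u_i}(\,\cdot\,,v)=(f_i,v)+(g_i,v)_\Gamma$ treated in Lemma~\ref{lem:lipschitz_general}, differing only in their coefficient $u$ versus $v$ and in their data. I would therefore prove the four estimates in the order $S\to S'\to Z\to Z'$, so that each one feeds the next.

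For the first bound I set $y_1:=S(u)$, $y_2:=S(v)$ and observe that both solve \eqref{eq:weak_form} with identical data $f,g$. Applying Lemma~\ref{lem:lipschitz_general} with $f_1=f_2$ and $g_1=g_2$ leaves only the coefficient contribution $\|u-v\|_{L^2(\Gamma)}\,\|S(v)\|_{H^1(\Omega)}$, and the uniform bound $\|S(v)\|_{H^1(\Omega)}\le c$ from Lemma~\ref{lem:stability_SZ} closes this step. For $S'$ I set $\delta y_1:=S'(u)\delta u$, $\delta y_2:=S'(v)\delta u$; by \eqref{eq:tangent_equation} these solve the tangent problem with coefficients $u,v$ and boundary data $-S(u)\delta u$, $-S(v)\delta u$. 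Lemma~\ref{lem:lipschitz_general} now yields two terms: a coefficient term bounded by $\|\delta y_2\|_{H^1(\Omega)}\le c\,\|\delta u\|_{L^2(\Gamma)}$ via Lemma~\ref{lem:stability_SZ_prime}, and a boundary term $\|(S(u)-S(v))\delta u\|_{H^{-1/2}(\Gamma)}$, which \eqref{eq:multiplication_H-12} dominates by $c\,\|\delta u\|_{L^2(\Gamma)}\|S(u)-S(v)\|_{H^1(\Omega)}$; inserting the already-established Lipschitz bound for $S$ gives the claim.

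The adjoint operators follow the same pattern, the only new feature being that the bulk right-hand side now varies. For $Z$ the states $p_i:=Z(\cdot)$ solve the adjoint problem \eqref{eq:adjoint_eq} with bulk data $S(u)-y_d$, $S(v)-y_d$, so Lemma~\ref{lem:lipschitz_general} produces the extra term $\|S(u)-S(v)\|_{H^1(\Omega)^*}$, which I bound using $L^2(\Omega)\hookrightarrow H^1(\Omega)^*$ and then the $S$-estimate by $c\,\|u-v\|_{L^2(\Gamma)}$; the coefficient term is controlled by $\|Z(v)\|_{H^1(\Omega)}\le c$ from Lemma~\ref{lem:stability_SZ}. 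For $Z'$ one simply combines both mechanisms: the dual-for-Hessian equation carries bulk forcing $S'(\cdot)\delta u$ (handled by the $S'$-Lipschitz estimate together with $L^2(\Omega)\hookrightarrow H^1(\Omega)^*$) and boundary forcing $-Z(\cdot)\delta u$ (handled by \eqref{eq:multiplication_H-12} together with the $Z$-Lipschitz estimate), while the coefficient term is absorbed by $\|Z'(v)\delta u\|_{H^1(\Omega)}\le c\,\|\delta u\|_{L^2(\Gamma)}$.

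I do not expect a genuine analytic obstacle here, since every individual inequality is already available. The one point requiring care is purely organizational: the four estimates must be proved in the stated order because each relies on its predecessor, and the product terms in the data differences must be measured in exactly the norms demanded by the right-hand side of Lemma~\ref{lem:lipschitz_general}, namely $\|\cdot\|_{H^{-1/2}(\Gamma)}$ for the boundary forcing (where \eqref{eq:multiplication_H-12} applies) and $\|\cdot\|_{H^1(\Omega)^*}$ for the bulk forcing (where the embedding $L^2(\Omega)\hookrightarrow H^1(\Omega)^*$ applies). With the paper's convention of hiding the dependence on $\|u^-\|_{L^2(\Gamma)}$ and the control bounds in the generic constant, all intermediate constants are uniform, so the chaining produces the asserted products $\|u-v\|_{L^2(\Gamma)}$ and $\|u-v\|_{L^2(\Gamma)}\|\delta u\|_{L^2(\Gamma)}$ without difficulty.
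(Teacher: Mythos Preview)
Your proposal is correct and follows essentially the same approach as the paper: both derive all four Lipschitz estimates by applying Lemma~\ref{lem:lipschitz_general} together with the stability bounds of Lemmata~\ref{lem:stability_SZ} and~\ref{lem:stability_SZ_prime} and the multiplication rule~\eqref{eq:multiplication_H-12}, chaining the results in the order $S\to S'\to Z\to Z'$ so that each estimate feeds the next. Your write-up is in fact more detailed than the paper's, which merely sketches the argument and spells out only the $Z'$ case explicitly.
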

\begin{proof}
  The estimates for $S$ and $S'$ follow directly from Lemma
  \ref{lem:lipschitz_general} and the stability estimates for $S$ and $S'$ 
  in $H^1(\Omega)$ proved in the Lemmata \ref{lem:stability_SZ} and \ref{lem:stability_SZ_prime}.
  The Lipschitz estimate for $Z$ is proved in a similar way. In this case
  one has to apply the Lipschitz estimate shown for 
  $S$ to the term $\|S(u)-S(v)\|_{H^1(\Omega)}$ appearing due to the differences in
  the right-hand sides.  
  With the same idea we show the Lipschitz estimate for
  $Z'$. Using again Lemma~\ref{lem:lipschitz_general} we get
  \begin{align*}
    &\|Z'(u)\delta u - Z'(v)\delta u\|_{H^1(\Omega)}
    \le c\,\Big(\|u-v\|_{L^2(\Gamma)}\,\|Z'(u)\delta u\|_{H^1(\Omega)}  \\
    &\qquad+ \|S'(u)\delta u - S'(v)\delta v\|_{H^1(\Omega)}
      + \|\delta u\,(Z(u) - Z(v))\|_{H^{-1/2}(\Gamma)}\Big).
  \end{align*}
  It remains to bound the three terms on the right-hand side.
  To this end, we apply Lemma \ref{lem:stability_SZ_prime}
  to the first term, the Lipschitz estimate for $S'(\cdot)\delta u$ to the second term,
  and the multiplication rule \eqref{eq:multiplication_H-12} with $y=Z(u) - Z(v)$
  as well as the Lipschitz estimate for $Z$ to the third term. 
\end{proof}
As the optimal control problem is non-convex we have to deal with local solutions.
For some local solution $\bar u\in U_{ad}$ we require the following second-order
sufficient condition:
\begin{assumption}[SSC]\label{ass:ssc}
  The objective functional is locally convex near the local solution $\bar u$, i.\,e.,
  a constant $\delta > 0$ exists such that
  \begin{equation}\label{eq:ssc}
    j''(\bar u)(v,v) \ge \delta\,\|v\|_{L^2(\Gamma)}^2 \qquad\forall v\in L^2(\Gamma).
  \end{equation}
\end{assumption}
With standard arguments one can show that each function $\bar u\in U_{ad}$ fulfilling the
first-order necessary condition \eqref{eq:opt_cond} and the second-order sufficient condition
\eqref{eq:ssc} is indeed a local solution and satisfies the quadratic growth condition
\begin{equation*}
  j(\bar u) \le j(u) - \gamma\,\|u-\bar u\|_{L^2(\Gamma)}^2\qquad \forall u\in B_\tau(\bar u),
\end{equation*}
with certain constants $\gamma,\tau > 0$.
The author is aware that there are weaker assumptions which are sufficient for 
local minima, for instance one could formulate \eqref{eq:ssc} for all directions
$v$ from a critical cone. However, with this assumption the
convergence proof for the postprocessing approach presented in Section
\ref{sec:postprocessing} requires some more careful investigations, in
particular the construction of a modified interpolant onto $U_{ad}$.
One possible solution for this issue can be found in \cite{KP14}.

Later, we will require the following Lipschitz estimate for the Hessian of $j$.
\begin{lemma}\label{lem:lipschitz_2nd_deriv}
  Let $u,v \in L^2(\Gamma)$ fulfilling \eqref{eq:ass_coercivity} be given.
  Then, the Lipschitz-estimate 
  \begin{equation*}
    \left\vert j''(u)(\delta u,\delta u) - j''(v)(\delta u,\delta u)\right\vert \le
    c\,\|\delta u\|_{L^2(\Gamma)}^2\,\|u-v\|_{L^2(\Gamma)}.
    \end{equation*}
    is valid for all $\delta u\in L^2(\Gamma)$.
\end{lemma}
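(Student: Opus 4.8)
The plan is to start from the explicit representation \eqref{eq:second_deriv_j} of the reduced Hessian evaluated on the diagonal $\tau u=\delta u$. Writing $y_w:=S(w)$, $p_w:=Z(w)$, $\delta y_w:=S'(w)\delta u$ and $\delta p_w:=Z'(w)\delta u$ for $w\in\{u,v\}$, formula \eqref{eq:second_deriv_j} reads
\[
  j''(w)(\delta u,\delta u)=\bigl(\alpha\,\delta u-p_w\,\delta y_w-y_w\,\delta p_w,\,\delta u\bigr)_{L^2(\Gamma)}.
\]
The quadratic term $\alpha\,(\delta u,\delta u)_{L^2(\Gamma)}$ does not depend on the base point and cancels in the difference, leaving
\[
  j''(u)(\delta u,\delta u)-j''(v)(\delta u,\delta u)
  =-\bigl(p_u\delta y_u-p_v\delta y_v,\delta u\bigr)_{L^2(\Gamma)}
   -\bigl(y_u\delta p_u-y_v\delta p_v,\delta u\bigr)_{L^2(\Gamma)}.
\]
It thus remains to estimate these two boundary integrals.

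Next I would telescope each product so that exactly one factor carries the difference of base points. For the first term I write $p_u\delta y_u-p_v\delta y_v=(p_u-p_v)\,\delta y_u+p_v\,(\delta y_u-\delta y_v)$, and analogously $y_u\delta p_u-y_v\delta p_v=(y_u-y_v)\,\delta p_u+y_v\,(\delta p_u-\delta p_v)$. Each of the four resulting summands is a product of three boundary factors tested against $\delta u$. Since $\delta u$ carries no regularity beyond $L^2(\Gamma)$, it must always occupy the $L^2$-slot, and I would estimate the two remaining factors in $L^4(\Gamma)$ by the generalized H\"older inequality with exponents $(4,4,2)$ (as already used in \eqref{eq:remainder_term}) together with the trace embedding $H^1(\Omega)\hookrightarrow L^4(\Gamma)$.

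Then each factor is controlled by the estimates established earlier. In every summand precisely one factor is a difference: $p_u-p_v$ and $y_u-y_v$ are bounded in $H^1(\Omega)$ by $c\,\|u-v\|_{L^2(\Gamma)}$, whereas the two derivative differences $\delta y_u-\delta y_v$ and $\delta p_u-\delta p_v$ are bounded by $c\,\|u-v\|_{L^2(\Gamma)}\|\delta u\|_{L^2(\Gamma)}$, in all cases by the Lipschitz estimates of Lemma~\ref{lem:lipschitz_SZ}. A non-difference factor of type $\delta y_u$ or $\delta p_u$ is bounded by $c\,\|\delta u\|_{L^2(\Gamma)}$ via the stability estimate of Lemma~\ref{lem:stability_SZ_prime}, while a non-difference factor of type $y_v$ or $p_v$ is bounded by a constant via Lemma~\ref{lem:stability_SZ}. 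Multiplying the three bounds in each summand produces exactly $c\,\|\delta u\|_{L^2(\Gamma)}^2\,\|u-v\|_{L^2(\Gamma)}$, and summing the four contributions yields the claim.

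The computation is essentially routine once the splitting is fixed; the only point requiring care is the bookkeeping of the H\"older exponents and of where the factor $\|u-v\|_{L^2(\Gamma)}$ and the two factors $\|\delta u\|_{L^2(\Gamma)}$ are generated. The main obstacle is therefore to verify that in each of the four triple products the available norms multiply to the correct homogeneity---one power of $\|u-v\|_{L^2(\Gamma)}$ and two powers of $\|\delta u\|_{L^2(\Gamma)}$---without ever needing a boundary norm stronger than $L^4(\Gamma)$ of a quantity that is only bounded in $H^1(\Omega)$, which is precisely what the embedding $H^1(\Omega)\hookrightarrow L^4(\Gamma)$ guarantees.
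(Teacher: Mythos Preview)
Your proposal is correct and follows essentially the same route as the paper: the paper also starts from \eqref{eq:second_deriv_j}, telescopes each bilinear boundary term exactly as you do, estimates the resulting products via the embedding $H^1(\Omega)\hookrightarrow L^4(\Gamma)$ together with the H\"older inequality, and then invokes the Lipschitz estimates of Lemma~\ref{lem:lipschitz_SZ} and the stability estimates of Lemmata~\ref{lem:stability_SZ} and~\ref{lem:stability_SZ_prime}. The only cosmetic difference is that the paper absorbs the telescoping and the $L^4(\Gamma)$ bounds into a single displayed inequality rather than writing out the four summands separately.
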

\begin{proof}
  To shorten the notation we write
  $y_u=S(u)$, $p_u = Z(u)$, $\delta y_u = S'(u)\delta u$ and $\delta p_u = Z'(u)\delta u$.
  From the representation \eqref{eq:second_deriv_j} we obtain
  \begin{align*}
    & \left\vert j''(v)(\delta u,\delta u) - j''(u)(\delta u,\delta
      u)\right\vert \\
    &\quad\le \left\vert(p_u\, \delta y_u - p_v\, \delta y_v + y_u\,\delta p_u - y_v\,\delta
    p_v,\delta u)_{L^2(\Gamma)}\right\vert.
  \end{align*}
  We estimate the right-hand side using the Cauchy-Schwarz inequality, 
  the embedding $H^1(\Omega)\hookrightarrow L^4(\Gamma)$ and the Lipschitz
  estimates from Lemma \ref{lem:lipschitz_SZ} as well as the a priori estimates
  from Lemmata \ref{lem:stability_SZ} and
  \ref{lem:stability_SZ_prime}. This implies
\begin{align*}
  &\quad \left\vert(p_u\,\delta y_u - p_v\,\delta y_v,\delta u)_{L^2(\Gamma)}\right\vert \\
  &\le c\left(\|p_u - p_v\|_{H^1(\Omega)}\,\|\delta y_u\|_{H^1(\Omega)} +
    \|\delta y_u - \delta y_v\|_{H^1(\Omega)}\,\|p_v\|_{H^1(\Omega)}
  \right)\|\delta u\|_{L^2(\Gamma)}\\
  &\le c\,\|u-v\|_{L^2(\Gamma)}\,\|\delta u\|_{L^2(\Gamma)}^2.
\end{align*}
 With similar arguments we deduce 
 \begin{align*}
   &\quad \left\vert\left(y_u\,\delta p_u - y_v\,\delta p_v,\delta u\right)_{L^2(\Gamma)}\right\vert \\
   &\le c\left(\|y_u - y_v\|_{H^1(\Omega)}\,\|\delta p_u\|_{H^1(\Omega)} +
     \|\delta p_u - \delta p_v\|_{H^1(\Omega)}\,
     \|y_v\|_{H^1(\Omega)}\right)\|\delta u\|_{L^2(\Gamma)} \\
   &\le c\,\|u-v\|_{L^2(\Gamma)}\,\|\delta u\|_{L^2(\Gamma)}^2,
 \end{align*}
 and conclude the assertion.
\end{proof}
\begin{corollary}\label{cor:coervice_neighb}
  Let $\bar u\in U_{ad}$ be a local solution of \eqref{eq:target} satisfying Assumption \ref{ass:ssc}.
  Then, some $\varepsilon>0$ exists such that the inequality
  \[j''(u)(\delta u,\delta u) \ge \frac{\delta}2 \|\delta u\|^2_{L^2(\Gamma)}\]
  is valid for all $\delta u\in L^2(\Gamma)$ and $u\in L^2(\Gamma)$ with $\|u-\bar u\|_{L^2(\Gamma)}\le \varepsilon$.
\end{corollary}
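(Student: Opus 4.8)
The plan is to combine the second-order sufficient condition at $\bar u$ with the Lipschitz continuity of the Hessian from Lemma \ref{lem:lipschitz_2nd_deriv}, treating the deviation of $j''(u)$ from $j''(\bar u)$ as a perturbation that can be absorbed by choosing $\varepsilon$ sufficiently small.

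First I would check that $j''(u)$ is well defined for $u$ near $\bar u$. Since $\bar u\in U_{ad}$ with $u_a>0$, we have $\bar u^-=0$, so $\bar u$ lies in the open set $\{v\in L^2(\Gamma)\colon \|v^-\|_{L^2(\Gamma)}<c_*^{-2}\}$ on which the coercivity assumption \eqref{eq:ass_coercivity} holds. By openness there is some $\varepsilon_0>0$ such that every $u$ with $\|u-\bar u\|_{L^2(\Gamma)}\le\varepsilon_0$ still satisfies \eqref{eq:ass_coercivity}; in particular the pair $(u,\bar u)$ is then admissible in Lemma \ref{lem:lipschitz_2nd_deriv}.

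Next, for an arbitrary direction $\delta u\in L^2(\Gamma)$ and such a $u$, I would split
\[
  j''(u)(\delta u,\delta u) = j''(\bar u)(\delta u,\delta u) + \big(j''(u)(\delta u,\delta u)-j''(\bar u)(\delta u,\delta u)\big).
\]
The first term is bounded below by $\delta\,\|\delta u\|_{L^2(\Gamma)}^2$ using Assumption \ref{ass:ssc}, while the second is bounded below by $-c\,\|\delta u\|_{L^2(\Gamma)}^2\,\|u-\bar u\|_{L^2(\Gamma)}\ge -c\,\varepsilon\,\|\delta u\|_{L^2(\Gamma)}^2$ by Lemma \ref{lem:lipschitz_2nd_deriv}, where $c$ denotes the Lipschitz constant appearing there. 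Together these give
\[
  j''(u)(\delta u,\delta u) \ge (\delta - c\,\varepsilon)\,\|\delta u\|_{L^2(\Gamma)}^2 .
\]

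Finally, choosing $\varepsilon:=\min\{\varepsilon_0,\delta/(2c)\}$ makes the prefactor at least $\delta/2$, which is exactly the asserted inequality. There is no serious obstacle here, as the argument is a routine perturbation of coercivity; the only point deserving care is the well-definedness of $j''(u)$, that is, verifying that \eqref{eq:ass_coercivity} persists in an $L^2(\Gamma)$-neighborhood of $\bar u$, since the Lipschitz estimate of Lemma \ref{lem:lipschitz_2nd_deriv} is available only under that hypothesis.
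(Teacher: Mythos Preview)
Your proof is correct and is exactly the argument the paper has in mind: it merely states that the assertion follows immediately from Lemma~\ref{lem:lipschitz_2nd_deriv} and refers to \cite[Lemma~2.23]{KV09} for details, which is precisely the perturbation-of-coercivity computation you wrote out. Your extra remark on the persistence of \eqref{eq:ass_coercivity} in an $L^2(\Gamma)$-neighborhood of $\bar u$ is a welcome clarification that the paper leaves implicit.
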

\begin{proof}
  The assertion follows immediately from the previous Lemma. For further
  details we refer to \cite[Lemma 2.23]{KV09}.
\end{proof}

In the next Lemma we will collect some basic regularity results for the solution
of \eqref{eq:target}.
\begin{lemma}\label{lem:regularity_general}
  Let $\Omega\subset\mathbb R^n$, $n\in\{2,3\}$, be a Lipschitz domain.
  Each local solution  $\bar u\in U_{ad}$ of \eqref{eq:target}
  and the corresponding states $\bar y=S(\bar u)$, $\bar p=Z(\bar u)$ satisfy
  \begin{equation*}
    \bar u\in H^1(\Gamma)\cap L^\infty(\Gamma),\qquad \bar y,\bar p\in H^{3/2}(\Omega)
    \cap H^1(\Gamma)\cap C(\overline\Omega).
  \end{equation*}
\end{lemma}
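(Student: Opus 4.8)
The strategy is a bootstrapping argument that feeds the projection formula \eqref{eq:proj_formula} into the regularity estimates of Lemma \ref{lem:props_S} in successively stronger function spaces. I would start from the crudest information available: since $\bar u\in U_{ad}$ we have the uniform bound $0< u_a\le \bar u\le u_b$ pointwise, so in particular $\bar u\in L^\infty(\Gamma)$ and $\bar u\in L^p(\Gamma)$ for every $p$. This already makes all the hypotheses on the control exponents $p_1,p_2$ in Lemma \ref{lem:stability_SZ} trivially satisfied, and the same applies for $\bar p=Z(\bar u)$ since the control-to-adjoint operator is governed by the same boundary coefficient $\bar u$.

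From here I would invoke Lemma \ref{lem:stability_SZ} directly. Its three lines give $\bar y=S(\bar u)\in H^{3/2}(\Omega)\cap H^1(\Gamma)$ and $\bar y\in L^\infty(\Omega)$, and the final sentence of that lemma transfers the identical conclusions to $\bar p=Z(\bar u)$; combined with the continuity of the solution (part b of Lemma \ref{lem:props_S} yields membership in $C(\overline\Omega)$), this establishes $\bar y,\bar p\in H^{3/2}(\Omega)\cap H^1(\Gamma)\cap C(\overline\Omega)$, which is exactly the claimed state/adjoint regularity. The only piece still needing work is the improved regularity $\bar u\in H^1(\Gamma)$ of the control itself, and this is where the projection formula must be exploited rather than the mere pointwise bounds.

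For the control, I would argue from $\bar u=\Pi_{ad}\big(\tfrac1\alpha[\bar y\,\bar p]_\Gamma\big)$. Having just shown $\bar y,\bar p\in H^1(\Gamma)\cap C(\overline\Omega)$, the product $\bar y\,\bar p$ lies in $H^1(\Gamma)$ as well: each factor is in $H^1(\Gamma)$ and in $L^\infty(\Gamma)$, so by the usual product rule for the tangential derivative the product is again in $H^1(\Gamma)$ (this is where the simultaneous membership in $C(\overline\Omega)$ and $H^1(\Gamma)$ is used). It then remains to observe that the pointwise projection $\Pi_{ad}$ onto the interval $[u_a,u_b]$, namely $v\mapsto \min(u_b,\max(u_a,v))$, maps $H^1(\Gamma)$ into itself. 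This is a standard fact: truncation by constants is a Lipschitz superposition operator which preserves $H^1$ (the chain rule for Sobolev functions, e.g.\ via Stampacchia's theorem, shows $\|\nabla_\Gamma \Pi_{ad}(w)\|_{L^2(\Gamma)}\le\|\nabla_\Gamma w\|_{L^2(\Gamma)}$). Applying this with $w=\tfrac1\alpha[\bar y\,\bar p]_\Gamma$ gives $\bar u\in H^1(\Gamma)$, and $\bar u\in L^\infty(\Gamma)$ is immediate from the bounds $u_a\le\bar u\le u_b$.

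The only genuinely delicate point is the stability of $H^1(\Gamma)$ under the pointwise projection on a possibly curved boundary $\Gamma$; everything else is a direct citation of the already-established lemmata. I would therefore expect the $H^1(\Gamma)$-regularity of $\bar u$ to be the one step requiring a short justification, either by citing the Stampacchia chain rule for Sobolev functions on manifolds or by a localization-and-flattening argument reducing the claim to the flat case, where truncation is classically known to be bounded on $H^1$.
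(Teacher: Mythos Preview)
Your proposal is correct and follows essentially the same route as the paper's proof: both use $\bar u\in U_{ad}\subset L^\infty(\Gamma)$ to trigger Lemma~\ref{lem:props_S} (you cite it through the wrapper Lemma~\ref{lem:stability_SZ}) for the $H^{3/2}(\Omega)\cap H^1(\Gamma)\cap C(\overline\Omega)$ regularity of $\bar y,\bar p$, then bound $\|\bar y\,\bar p\|_{H^1(\Gamma)}$ via the product rule using the $H^1(\Gamma)$- and $L^\infty$-bounds just obtained, and finally invoke the Stampacchia lemma on the projection formula \eqref{eq:proj_formula} to conclude $\bar u\in H^1(\Gamma)$. The paper simply cites \cite[p.~50]{KinderlehrerStampacchia1980} for the last step where you sketch the truncation argument, but the content is the same.
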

\begin{proof}
  All regularity result, except $\bar u\in H^1(\Gamma)$, follow directly from Lemma \ref{lem:props_S}.
  To show $\bar u\in H^1(\Gamma)$ we apply the product rule
  \begin{equation*}
    \|\bar y\,\bar p\|_{H^1(\Gamma)} \le c\left(\|\bar y\|_{H^1(\Gamma)} \,\|\bar p\|_{L^\infty(\Omega)} + \|\bar y\|_{L^\infty(\Omega)} \,\|\bar p\|_{H^1(\Gamma)}\right) \le c
  \end{equation*}
  and confirm $\bar y\,\bar p\in H^1(\Omega)$. The desired result then follows after an
  application of the Stampacchia-Lemma, see \cite[p. 50]{KinderlehrerStampacchia1980},
  to the projection formula \eqref{eq:proj_formula}.
\end{proof}
Under additional assumptions on the geometry of $\Omega$ we can show even higher regularity. This is needed for the postprocessing approach studied in Section \ref{sec:postprocessing} where we will show
almost quadratic convergence of the control approximations.
\begin{lemma}\label{lem:regularity_improved}
  Let  $\Omega\subset\mathbb R^2$ be a bounded domain with a 
  $C^{1,1}$-boundary $\Gamma$. Then, there holds
  \begin{equation*}
    \bar u\in W^{1,q}(\Gamma)\cap H^{2-1/q}(\tilde\Gamma),\qquad \bar y, \bar p \in W^{2,q}(\Omega),
  \end{equation*}
  for all $\tilde\Gamma\subset\subset \mathcal A$ or $\tilde\Gamma\subset\subset \mathcal I$,
  where  $\mathcal A:=\{x\in\Gamma\colon u(x)\in\{u_a,u_b\}\}$ and
  $\mathcal I:=\Gamma\setminus\mathcal A$
  % \{x\in\Gamma\colon u(x)\in (u_a,u_b)\}$
  denote the active and inactive set, respectively.
\end{lemma}
\begin{proof}
  With the projection  formula \eqref{eq:proj_formula}, $\bar y, \bar p\in H^1(\Gamma)$ proved in
  Lemma \ref{lem:regularity_general}
  and the multiplication rule \cite[Theorem 1.4.4.2]{Gri85} we obtain
  $\bar u\in H^{1/2}(\Gamma)$. 
  From Lemma \ref{lem:props_S}c) we then conclude
  $\bar y, \bar p\in H^2(\Omega)\hookrightarrow W^{1,q}(\Gamma)$ for all $q<(1,\infty)$
  and a further application of the multiplication rule yields
  $\bar y\,\bar p\in W^{1,q}(\Gamma)$.
  From \eqref{eq:proj_formula} we conclude the property $\bar u\in W^{1,q}(\Gamma)$.
  Furthermore, we confirm that $\bar u\,\bar y, \bar u\,\bar p\in W^{1-1/q,q}(\Gamma)$
  and a standard shift theorem for the Neumann problem, compare also the
  technique used in the proof of Lemma \ref{lem:props_S}a), results in
  $\bar y,\bar p\in W^{2,q}(\Omega)$. Repeating the arguments above, i.\,e., using the multiplication
  rule and the projection formula, we obtain
  $\bar u\in W^{2-1/q,q}(\tilde\Gamma)\hookrightarrow H^{2-1/q}(\tilde\Gamma)$.
\end{proof}

We chose the assumptions of the previous Lemma in such a way that
the regularity is only restricted due to the projection formula. Of course, when
the control bounds are never active we could further improve the regularity results.

\section{Finite element approximation of the state equation}\label{sec:fem}

This section is devoted to the finite element approximation of the variational problem
\eqref{eq:weak_form}. While the results from the previous sections are valid for
arbitrary Lipschitz domains (unless otherwise explicitly assumed), we have to assume more smoothness
of the boundary $\Gamma$ in order to establish our discretization results:

\begin{enumerate}[align=left]
\item[\customlabel{ass:domain1}{\textbf{(A1)}}]
  The domain $\Omega\subset\mathbb R^n$, $n\in\{2,3\}$, possesses a
  Lipschitz continuous boundary $\Gamma$ which is piecewise $C^1$. 
\end{enumerate}

This definition includes arbitrary (possibly non-convex) polygonal or
polyhedral domains. Indeed, the regularity of solutions is in this case also
restricted by corner and edge singularities. However, for the first convergence result
we require only $H^{3/2}(\Omega)\cap H^1(\Gamma)$-regularity of the solution.
Later, we want to investigate improved discretization techniques for which
more regularity is needed. Then, we will use a stronger assumption on the domain.

First, we introduce shape-regular triangulations $\{\mathcal T_h\}_{h>0}$
of $\Omega$ consisting of triangles ($n=2$) or tetrahedra ($n=3$).
The elements $T$  may have curved edges/faces
such that the property 
\begin{equation*}
  \overline\Omega = \bigcup_{T\in\mathcal T_h} \overline T
\end{equation*}
is valid for an arbitrary domain $\Omega$.
Moreover, we assume that the triangulations are feasible in the sense of
Ciarlet \cite{Cia91}.

The mesh parameter $h>0$ is the maximal element diameter
\begin{equation*}
 h = \max_{T\in\mathcal T_h} h_T,\quad h_T:=\text{diam}(T).
\end{equation*}
The family of meshes $\{\mathcal T_h\}_{h>0}$ is assumed to be quasi-uniform,
this means some $\kappa > 0$ independent of $h$ exists such that
each element $T\in\mathcal T_h$ contains a ball with radius $\rho_T$
satisfying the estimate $\frac{\rho_T}{h} \ge \kappa$.
Each triangulation $\mathcal T_h$ of $\Omega$ induces also a triangulation $\mathcal E_h$
of the boundary $\Gamma$

By $F_T\colon \hat T\to T$ we denote the transformations from the reference
triangle/tetra\-hedron $\hat T$ to the world element $T\in\mathcal T_h$.
The transformations $F_T$ may be non-affine for elements with curved
faces. Here, we consider transformations of the form
\begin{equation*}
  F_T = \tilde F_T + \Phi_T,
\end{equation*}
with some affine function $\tilde F_T(\hat x) = \tilde B_T\hat x + \tilde b_T$, $\tilde B_T\in\mathbb R^{n\times n}$, $\tilde b\in\mathbb R^n$, chosen in such a way that if $T$ is a curved boundary
element, $\tilde T=\tilde F_T(\hat T)$ is an $n$-simplex whose vertices coincide with the vertices
of $T$. The assumed shape-regularity implies $\|\tilde B_T\|\le c\,h_T$ and
$\|\tilde B_T^{-1}\|\le h_T^{-1}$, see \cite[Theorem 15.2]{Cia91}.

To guarantee the validity of interpolation error estimates we assume:
\begin{enumerate}[align=left]
  \raggedright  
\item[\customlabel{ass:domain2}{\textbf{(A2)}}]
  The triangulations $\mathcal T_h$ are regular of order $2$ in the sense of \cite{Ber89},
  this is, for all sufficiently small $h>0$ there holds
  \begin{equation}\label{eq:props_trafo}
    \sup_{\hat x\in\hat T} \|D \Phi_T(\hat x)\cdot \tilde B_T^{-1}\| \le c < 1,\qquad 
    \sup_{\hat x\in \hat T}\|D^2 \Phi_T(\hat x)\| \le c h^2,
  \end{equation}
  for all $T\in\mathcal T_h$.
\end{enumerate}
There are multiple strategies to construct the mappings $F_T$ satisfying these assumptions
and we refer the reader for instance to \cite{Ber89,Sco73,Zla73}. Therein, it is assumed
that $\Gamma$ is piecewise $C^3$, only in the second reference $C^4$ is required.

The trial and test space is defined by
\begin{equation*}
  V_h:= \{ v_h\in C(\overline\Omega) \colon v_h = \hat v_h\circ F_T^{-1},\ \hat v_h \in \mathcal P_1(\hat T)\ \mbox{for all}\ T\in\mathcal T_h\}.
\end{equation*}
Next, we introduce an interpolation operator which maps functions from $W^{1,1}(\Omega)$ onto $V_h$.
Therefore, we partly use the quasi-interpolant proposed by Bernardi \cite{Ber89}, but
use a modification for boundary nodes as in \cite{SZ90}, see also \cite{Ape99}. To each interior node
$x_i$, $i=1,\ldots,N^{in}$, of $\mathcal T_h$, we associate the patch of elements
$\sigma_i:= \cup\{\bar T\colon T\in\mathcal T_h, x_i\in \bar T\}$. For the boundary nodes $x_i$,
$i=N^{in}+1,\ldots,N$, we define
$\sigma_i:=\cup\{\bar E\colon E\in\mathcal E_h, x_i\in \bar E\}$. 
Instead of using nodal values as for the Lagrange interpolant, we use the nodal values
of some regularized function computed by an $L^2$-projection over $\sigma_i$. Therefore, denote by $F_i\colon \hat\sigma_i\to \sigma_i$
a continuous transformation from a reference patch $\hat \sigma_i$ having diameter
$O(1)$ to $\sigma_i$.
% If $x_i$ is a boundary node ($i=N^{in}+1,\ldots,N$),
% the corresponding reference patch $\hat\sigma_i$
% is a $d-1$-dimensional manifold.
The interpolation operator $\Pi_h\colon W^{1,1}(\Omega)\to V_h$ is defined as follows.
To each node $x_i$, $i=1,\ldots,N$, we associate a first-order polynomial
$p_i=\hat p_i\circ F_i^{-1}$ defined by
\begin{equation*}
  \int_{\hat\sigma_i} (\hat p_i - \hat u)\,\hat q = 0\qquad\forall q\in\mathcal P_1(\hat\sigma_i),
\end{equation*}
where $\hat u$ is chosen such that $u = \hat u\circ F_i^{-1}$.
The interpolation operator is defined by
\begin{equation*}
  \Pi_h v(x) = \sum_{i=1}^N p_i(x_i)\,\varphi_i(x),
\end{equation*}
where $\{\varphi_i\}_{i=1,\ldots,N}$ is the nodal basis of $V_h$.
Note, that due to the modification for boundary nodes, this operator is only applicable to $W^{1,1}(\Omega)$-functions. The desired interpolation properties remain valid. In particular, there holds
\begin{equation}\label{eq:int_error}
  \|u-\Pi_h u\|_{H^m(\Omega)} \le c h^{\ell-m} \|u\|_{H^{\ell}(\Omega)},\quad m\le \ell\le 2,\ \ell\ge 1,
\end{equation}
see \cite[Theorem 4.1]{Ber89}, \cite[Theorem 4.1]{SZ90}.
Due to the special choice of the patches $\sigma_i$ for the boundary nodes we get similar
interpolation error estimates on the boundary, this is,
\begin{equation}\label{eq:int_error_boundary}
    \|u-\Pi_h u\|_{H^m(\Gamma)} \le c h^{\ell-m}\|u\|_{H^{\ell}(\Gamma)}, \quad m\le \ell\le 2.
\end{equation}
The proof follows from the same arguments as in \cite[Theorem 4.1]{SZ90}.

The finite element solutions of \eqref{eq:weak_form}
are characterized by the variational formulations
\begin{equation}\label{eq:fem}
  \text{Find } y_h\in V_h\colon\quad a_u(y_h,v_h) = F(v_h)\qquad\forall v_h\in V_h.
\end{equation}
As in the continuous case one can show that \eqref{eq:fem} possesses a unique solution for each $h>0$.

With the usual arguments we can derive an error estimate for the approximation error in the energy-norm. % and $L^2(\Omega)$-norm.
\begin{lemma}\label{lem:h1_l2_error}
  Assume that \ref{ass:domain1} and \ref{ass:domain2} are satisfied and that
  the solution $y$ of \eqref{eq:weak_form} belongs to $H^{s}(\Omega)$ with some
  $s\in [1,2]$. Then, there holds the error estimate
\begin{align}
  \|y-y_h\|_{H^1(\Omega)}&\le c\,h^{s-1}\,\|y\|_{H^{s}(\Omega)}.\label{eq:h1_error}
  %\\
  %\|y-y_h\|_{L^2(\Omega)}&\le c h^2\left(1+\frac{\|u\|_{H^{t}(\Gamma)}}{\gamma_u}\right) \|y\|_{H^2(\Omega)},&\label{eq:l2_error}
\end{align}
%where $t>(n-1)/2$.
\end{lemma}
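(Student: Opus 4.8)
The plan is to apply the classical Céa-type argument, since the discretization is conforming and produces no variational crime. First I would observe that the finite element space satisfies $V_h\subset C(\overline\Omega)\subset H^1(\Omega)$, so the method is conforming, and that by construction the curved elements yield $\overline\Omega=\bigcup_{T\in\mathcal T_h}\overline T$ exactly. Consequently the bilinear form $a_u$ and the functional $F$ appearing in the discrete problem \eqref{eq:fem} are identical to those in the weak formulation \eqref{eq:weak_form}; in particular all integrals are taken over the exact domain $\Omega$ and its boundary $\Gamma$. Subtracting \eqref{eq:fem} from \eqref{eq:weak_form} tested against $v_h\in V_h$ then yields the Galerkin orthogonality $a_u(y-y_h,v_h)=0$ for all $v_h\in V_h$.

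Next I would invoke the boundedness and coercivity of $a_u$ established in the proof of Lemma \ref{lem:lax_milgram}, namely $a_u(v,w)\le (1+c_*^2\|u\|_{L^2(\Gamma)})\,\|v\|_{H^1(\Omega)}\,\|w\|_{H^1(\Omega)}$ and $a_u(v,v)\ge\gamma_u\|v\|_{H^1(\Omega)}^2$. For an arbitrary $v_h\in V_h$ the orthogonality relation gives
\begin{equation*}
  \gamma_u\,\|y-y_h\|_{H^1(\Omega)}^2 \le a_u(y-y_h,y-y_h) = a_u(y-y_h,y-v_h)
  \le (1+c_*^2\|u\|_{L^2(\Gamma)})\,\|y-y_h\|_{H^1(\Omega)}\,\|y-v_h\|_{H^1(\Omega)},
\end{equation*}
whence the quasi-best-approximation estimate
\begin{equation*}
  \|y-y_h\|_{H^1(\Omega)}\le \frac{1+c_*^2\|u\|_{L^2(\Gamma)}}{\gamma_u}\,\inf_{v_h\in V_h}\|y-v_h\|_{H^1(\Omega)}.
\end{equation*}
Since positive control bounds are imposed, the factor in front of the infimum is absorbed into the generic constant $c$, as announced after Lemma \ref{lem:lax_milgram}.

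Finally I would bound the infimum from above by choosing $v_h=\Pi_h y$, which is admissible because $y\in H^s(\Omega)\subset H^1(\Omega)\subset W^{1,1}(\Omega)$, so that the quasi-interpolant is well defined. The interpolation error estimate \eqref{eq:int_error} with $m=1$ and $\ell=s\in[1,2]$ gives $\|y-\Pi_h y\|_{H^1(\Omega)}\le c\,h^{s-1}\,\|y\|_{H^s(\Omega)}$, and combining this with the previous display yields the claimed bound.

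I do not expect any serious obstacle; the only point requiring care is the justification that the discretization produces no consistency error. This is guaranteed by the exact covering $\overline\Omega=\bigcup_{T\in\mathcal T_h}\overline T$ together with assumption \ref{ass:domain2}, which ensures that the isoparametric elements represent the geometry faithfully and that $V_h$ is genuinely a subspace of $H^1(\Omega)$. Once this is granted, the Galerkin orthogonality is exact and the remainder is the standard Céa argument combined with the interpolation estimate \eqref{eq:int_error}.
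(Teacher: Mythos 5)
Your proposal is correct and follows exactly the paper's argument: the proof given there is precisely C\'ea's lemma (Galerkin orthogonality from the conforming discretization with exact geometry, plus the boundedness and coercivity of $a_u$ from Lemma \ref{lem:lax_milgram}) combined with the interpolation error estimate \eqref{eq:int_error} applied to $\Pi_h y$ with $m=1$ and $\ell=s$. Your additional remarks on the absence of a variational crime merely spell out what the paper's one-line proof leaves implicit.
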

\begin{proof}
  The proof follows from the C\'ea-Lemma and the interpolation error estimates \eqref{eq:int_error}.
\end{proof}
Of particular interest are error estimates on the boundary. This is required in order to derive
error estimates for boundary control problems. To this end, we prove first a suboptimal result
which is valid for arbitrary Lipschitz domains $\Omega$.
\begin{lemma}\label{lem:fe_error_suboptimal}
  Let the assumptions \ref{ass:domain1} and \ref{ass:domain2} be satisfied.
  It is assumed that the solution $y$ of \eqref{eq:weak_form} belongs to $H^{3/2}(\Omega)$.
  Moreover, the parameter $u$
  fulfills \eqref{eq:ass_coercivity} and belongs to
  $L^p(\Gamma)$ with $p>2$ for $n=2$ and $p\ge 4$ for $n=3$.
  Then, the error estimate
  \begin{equation*}
    \|y - y_h\|_{L^2(\Gamma)}
    \le c\,h\left(1+\|u\|_{L^p(\Gamma)}\right)\|y\|_{H^{3/2}(\Omega)}
    \le c\,h\left(1+\|u\|_{L^p(\Gamma)}\right)^2
  \end{equation*}
  holds, for all $h>0$.
\end{lemma}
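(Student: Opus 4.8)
The goal is an $L^2(\Gamma)$ error estimate of order $h$ for a function $y \in H^{3/2}(\Omega)$, where we already have the $H^1(\Omega)$ estimate from Lemma~\ref{lem:h1_l2_error}. The natural tool is the Aubin--Nitsche duality argument adapted to the boundary functional. The plan is to introduce the error $e := y - y_h$ and to represent $\|e\|_{L^2(\Gamma)}$ via a dual problem whose right-hand side is supported on the boundary.

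Let me describe the duality setup.

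\begin{proof}
  We use a duality argument. Let $e := y - y_h$ denote the error and introduce the solution $w\in H^1(\Omega)$ of the auxiliary problem
  \begin{equation*}
    a_u(v,w) = (v,e)_{L^2(\Gamma)}\qquad\forall v\in H^1(\Omega).
  \end{equation*}
  Since $u$ satisfies \eqref{eq:ass_coercivity}, this problem is well-posed by Lemma~\ref{lem:lax_milgram}, and it is the weak form of
  \begin{equation*}
    -\Delta w + w = 0\ \mbox{in}\ \Omega,\qquad \partial_n w + u\,w = e\ \mbox{on}\ \Gamma.
  \end{equation*}
  Choosing $v = e$ and using Galerkin orthogonality $a_u(e,w_h)=0$ for any $w_h\in V_h$ gives
  \begin{equation*}
    \|e\|_{L^2(\Gamma)}^2 = a_u(e,w) = a_u(e,w-w_h)
    \le c\,\|e\|_{H^1(\Omega)}\,\|w-w_h\|_{H^1(\Omega)},
  \end{equation*}
  where the final inequality uses the boundedness of $a_u$ established in Lemma~\ref{lem:lax_milgram}.
\end{proof}

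The heart of the argument now shifts to bounding $\|w - w_h\|_{H^1(\Omega)}$, which by Lemma~\ref{lem:h1_l2_error} requires a regularity estimate for the dual solution $w$. Here the boundary datum is only $e \in L^2(\Gamma)$, so one cannot expect full $H^2$ regularity on a general Lipschitz domain; instead I would invoke Lemma~\ref{lem:props_S}a) (with $f=0$, $g=e$) to obtain $w \in H^{3/2}(\Omega)$ with $\|w\|_{H^{3/2}(\Omega)} \le c\,(1+\|u\|_{L^p(\Gamma)})\,\|e\|_{L^2(\Gamma)}$. Applying Lemma~\ref{lem:h1_l2_error} with $s=3/2$ then yields $\|w-w_h\|_{H^1(\Omega)} \le c\,h^{1/2}\,\|w\|_{H^{3/2}(\Omega)}$. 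Combining with the $H^1$ error estimate $\|e\|_{H^1(\Omega)} \le c\,h^{1/2}\,\|y\|_{H^{3/2}(\Omega)}$ (Lemma~\ref{lem:h1_l2_error} with $s=3/2$) produces, after dividing by $\|e\|_{L^2(\Gamma)}$, exactly the factor $h^{1/2}\cdot h^{1/2}=h$ together with the prefactor $(1+\|u\|_{L^p(\Gamma)})$. The second inequality in the statement then follows from the $H^{3/2}(\Omega)$ stability bound $\|y\|_{H^{3/2}(\Omega)} \le c\,(1+\|u\|_{L^p(\Gamma)})$ recorded in Lemma~\ref{lem:stability_SZ}.

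The main obstacle I anticipate is the regularity of the dual solution: since its Neumann-type datum lives only in $L^2(\Gamma)$ and $\Omega$ is merely Lipschitz, one gets $H^{3/2}$ rather than $H^2$ regularity, which is precisely why the result is \emph{suboptimal} in the sense that only $h^{1/2}$ is harvested from each of the two factors rather than the full $h$ from a single sharp estimate. One must also be careful that Lemma~\ref{lem:props_S}a) applies to the dual equation, whose Robin coefficient is the same $u$ and whose compatibility/regularity hypotheses on the data are met because $e \in L^2(\Gamma)$ and the volume source vanishes. A minor technical point is that $a_u$ is not symmetric when $u$ is present in the boundary term, so the dual problem must be posed with $w$ in the second argument of $a_u$ as written above, and Galerkin orthogonality must be applied in that same slot.
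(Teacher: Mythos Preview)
Your proof is correct and follows essentially the same Aubin--Nitsche duality argument as the paper: introduce the dual problem with boundary datum $e=y-y_h$, use Galerkin orthogonality, apply the $H^{3/2}$ regularity from Lemma~\ref{lem:props_S}a) to the dual solution, and combine the two factors of $h^{1/2}$. One harmless slip: $a_u$ is actually symmetric (the Robin term $(u\,y,v)_{L^2(\Gamma)}=\int_\Gamma u\,y\,v$ is symmetric in $y$ and $v$), so your closing remark about needing to place $w$ carefully in the second slot is unnecessary, though it does no damage.
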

\begin{proof}
  We introduce the dual problem
  \begin{equation*}
    \mbox{Find $w\in H^1(\Omega)$}\colon\quad
    a_u(v,w) = (y - y_h,v)_{L^2(\Gamma)}\qquad \forall v\in H^1(\Omega)
  \end{equation*}
  and obtain with the typical arguments of the Aubin-Nitsche trick
  \begin{align*}
    \|y - y_h\|_{L^2(\Gamma)}^2
    \le c\,\|y - y_h\|_{H^1(\Omega)}\,\|w-\Pi_h w\|_{H^1(\Omega)}
    \le c\,h\,\|w\|_{H^{3/2}(\Omega)}\,\|y\|_{H^{3/2}(\Omega)}.
  \end{align*}
  The last step is an application of Lemma \ref{lem:h1_l2_error} and the interpolation error estimate
  \eqref{eq:int_error}.
  The regularity required for the dual solution $w$ can be deduced from Lemma~\ref{lem:props_S}
  with $f\equiv0$ and $g=y-y_h$. Taking into account the a priori estimate
  \[
    \|w\|_{H^{3/2}(\Omega)} \le c\left(1+\|u\|_{L^p(\Gamma)}\right)\|y-y_h\|_{L^2(\Gamma)}
  \] we conclude the assertion.  
\end{proof}
If the solution is more regular, we can also show a higher convergence rate.
In this case we will use the H\"older inequality and a trace theorem to obtain
$\|y-y_h\|_{L^2(\Gamma)} \le \|y-y_h\|_{L^\infty(\Omega)}$, and insert the following result.
\begin{theorem}\label{thm:fe_error_linfty}
  Consider a planar domain domain $\Omega\in\mathbb R^2$.
  Let $u\in H^{1/2}(\Gamma)$ with $u\ge 0$ a.\,e., and assume that  
  \ref{ass:domain1} and \ref{ass:domain2} are satisfied.
  Assume that the solution $y$ of \eqref{eq:weak_form} belongs to
  $y\in W^{2,q}(\Omega)$ with $q\in[2,\infty)$.
  Then, the error estimate
  \begin{equation*}
  \|y-y_h\|_{L^\infty(\Omega)} \le c\,h^{2-2/q}\,\lnh\,\|y\|_{W^{2,q}(\Omega)}
\end{equation*}
is valid.
\end{theorem}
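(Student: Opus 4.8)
The plan is to establish the estimate in two stages: first a quasi-optimality result up to a logarithmic factor, namely
\[
  \|y - y_h\|_{L^\infty(\Omega)} \le c\,\lnh\,\inf_{v_h\in V_h}\|y - v_h\|_{L^\infty(\Omega)},
\]
and then to insert the interpolant $\Pi_h y$ and exploit the $L^\infty$-interpolation error estimate for $W^{2,q}$-functions. A scaling argument together with the Sobolev embedding $W^{2,q}(\hat T)\hookrightarrow C(\hat T)$ (valid for all $q\ge 2$ in two dimensions) yields $\|y - \Pi_h y\|_{L^\infty(\Omega)} \le c\,h^{2-2/q}\|y\|_{W^{2,q}(\Omega)}$ elementwise, which produces exactly the claimed rate. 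The whole difficulty is therefore concentrated in the logarithmically weighted stability estimate for the finite element projection in the maximum norm; the factor $\lnh$ is the well-known loss occurring in two dimensions.

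For the first stage I would follow the regularized Green's function technique of Frehse and Rannacher \cite{FR76}. Since $y\in W^{2,q}(\Omega)\hookrightarrow C(\overline\Omega)$ and $y_h\in C(\overline\Omega)$, the error $e:=y-y_h$ is continuous. I would split $e=\eta+\chi$ with $\eta:=y-\Pi_h y$ and $\chi:=\Pi_h y - y_h\in V_h$, so that $\|\eta\|_{L^\infty(\Omega)}$ is already controlled by the interpolation error and it remains to bound the discrete part $\chi$ at a point $x_0$ where $|\chi|$ is maximal. To this end introduce a smooth regularized Dirac functional $\delta_h\ge 0$ concentrated in a ball of radius $O(h)$ around $x_0$ and normalized so that $(\delta_h,v_h)_{L^2(\Omega)} = v_h(x_0)$ for all $v_h\in V_h$. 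Let $g\in H^1(\Omega)$ solve the self-adjoint regularized Green's problem $a_u(\varphi,g) = (\delta_h,\varphi)_{L^2(\Omega)}$ for all $\varphi\in H^1(\Omega)$, and let $g_h\in V_h$ be its finite element approximation. Using the discrete equation for $g_h$, the symmetry of $a_u$, and Galerkin orthogonality $a_u(g_h,y-y_h)=0$, one obtains the representation
\[
  \chi(x_0) = (\delta_h,\chi)_{L^2(\Omega)} = a_u(g_h,\chi) = -a_u(g_h,\eta).
\]

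It then remains to show $|a_u(g_h,\eta)|\le c\,\lnh\,\|\eta\|_{L^\infty(\Omega)}$. Writing out the three contributions of $a_u$, the zeroth-order term is harmless since it only requires $\|g_h\|_{L^1(\Omega)}\le c\,\lnh$. For the principal part I would integrate by parts elementwise, moving the derivative from $\eta$ onto the piecewise (transformed) linear function $g_h$; this replaces $(\nabla g_h,\nabla\eta)_{L^2(\Omega)}$ by a sum of interior jump terms $[\partial_n g_h]$ across element edges, elementwise Laplacians (which are small on the curved elements by \eqref{eq:props_trafo} and vanish on affine ones), and a boundary contribution $(\partial_n g_h,\eta)_{L^2(\Gamma)}$. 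Each of these is then bounded by $\|\eta\|_{L^\infty(\Omega)}$ times an $L^1$-norm of $g_h$-quantities. The crucial, and in my view hardest, ingredient is therefore the weighted $L^1$-stability estimate
\[
  \|\nabla g_h\|_{L^1(\Omega)} + \sum_{e}\|[\partial_n g_h]\|_{L^1(e)} \le c\,\lnh
\]
for the discrete Green's function of the Robin problem on curved elements, precisely the estimate stated to be unavailable in the literature and to be proved in the appendix following \cite{FR76}. Its proof typically rests on a dyadic decomposition of $\Omega$ into annuli centered at $x_0$ together with local energy and weighted a priori estimates for $g$ and $g-g_h$. Finally, the additional boundary integral $(u\,g_h,\eta)_{L^2(\Gamma)}$ has to be controlled by Hölder's inequality and a trace estimate, using only $u\in H^{1/2}(\Gamma)\hookrightarrow L^p(\Gamma)$ for finite $p$ since $u$ need not be bounded, which is a minor but genuinely new technical point compared to the pure Neumann case.
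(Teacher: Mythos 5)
Your proposal is correct in outline and follows the same Frehse--Rannacher philosophy as the paper (regularized Dirac function, regularized Green's function for the Robin operator, duality plus Galerkin orthogonality), but it organizes the final pairing in a genuinely different way. The paper never integrates by parts: starting from $\|y-y_h\|_{L^\infty(\Omega)}\le c\,(\|y-\Pi_h y\|_{L^\infty(T^*)}+(\delta^h,y-y_h)_{L^2(\Omega)})$ with an $L^1$-normalized sign function supported on the maximal element $T^*$, it uses both Galerkin orthogonalities to write $(\delta^h,y-y_h)=a_u(y-\Pi_h y,\,g^h-g_h^h)$ and then pairs $\|y-\Pi_h y\|_{W^{1,\infty}(\Omega)}$ (rate $h^{1-2/q}$) against the $W^{1,1}(\Omega)$ \emph{error} of the Green's function, for which it proves $\|g^h-g_h^h\|_{W^{1,1}(\Omega)}\le c\,h\,\lnh$ via the weight $\sigma=(|x-x^*|^2+c\,h^2)^{1/2}$. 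You instead keep the \emph{discrete} Green's function in the pairing, integrate by parts elementwise, and test $\|\eta\|_{L^\infty}$ against $L^1$-norms of $\Delta g_h$ (nonzero only on curved elements), interior jumps $[\partial_n g_h]$, and $\partial_n g_h$ on $\Gamma$. This buys a stronger intermediate statement — true quasi-optimality $\|y-y_h\|_{L^\infty(\Omega)}\le c\,\lnh\,\inf_{v_h\in V_h}\|y-v_h\|_{L^\infty(\Omega)}$, which needs only the $L^\infty$ interpolation error — at the price of the jump terms, which the paper's pairing avoids entirely since one derivative stays on each factor. Be aware, though, that your key lemma is not independent of the paper's machinery: since $[\partial_n g]=0$ across interior edges, the bound $\sum_e\|[\partial_n g_h]\|_{L^1(e)}\le c\,\lnh$ would be proved by writing $[\partial_n g_h]=[\partial_n(g_h-g)]$ and combining scaled trace/inverse inequalities with precisely the weighted estimates $\|\sigma\,\nabla^2(g^h-g_h^h)\|_{L^2_{\textup{pw}}(\Omega)}\le c\,\lnh^{1/2}$ and $\|\sigma^{-1}\|_{L^2(\Omega)}\le c\,\lnh^{1/2}$ that constitute the bulk of the paper's appendix, so the hard work you defer is the same hard work consumed through a different interface. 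One point genuinely in your favor: your treatment of the Robin term by H\"older and a trace estimate, using only $u\in H^{1/2}(\Gamma)\hookrightarrow L^p(\Gamma)$, $p<\infty$, is more faithful to the theorem's hypotheses than the paper's own step \eqref{eq:max_est_orthogonality}, which invokes $\|u\|_{L^\infty(\Gamma)}\le c$ — an assumption not implied by $u\in H^{1/2}(\Gamma)$ in two dimensions, although harmless in the intended application where $u$ lies between the control bounds.
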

The proof requires rather technical arguments and is postponed to the appendix.

\section{The discrete optimal control problem}\label{sec:discrete_optimal_control}

In the following we investigate the discretized optimal control problem:
\begin{equation}\label{eq:discrete_target}
\text{Find}\ u_h\in U_h^{ad}\colon\quad  J_h(y_h,u_h) := \frac12 \|y_h-y_d\|_{L^2(\Omega)}^2 + \frac\alpha2 \|u_h\|_{L^2(\Gamma)}^2 \to\min!
\end{equation}
subject to
\begin{equation*}
  y_h\in V_h,\quad a_{u_h}(y_h,v_h) = F(v_h)\qquad\forall v_h\in V_h.
\end{equation*}
The reduced objective functional is denoted by $j_h(u_h):=J_h(S_h(u_h),u_h)$.
We use piecewise linear finite elements to approximate the state $y$, i.\,e., the space
$V_h$ is defined as in the previous section. The controls are sought in the space
of piecewise constant functions,
\[U_h^{ad}:=\{w_h\in L^\infty(\Gamma)\colon w_h|_E \in\mathcal P_0\quad\forall E\in\mathcal E_h\}\cap U_{ad},\]
where $\mathcal E_h$ is the triangulation of the boundary induced by $\mathcal T_h$.

As in the continuous case we can derive a first-order necessary optimality condition which reads
\begin{align*}
  a_{u_h}(y_h,v_h) &= F(v_h) &&\mbox{for all}\ v_h\in V_h,\\
  a_{u_h}(v_h,p_h) &= (y_h-y_d,v_h)_{L^2(\Omega)} && \mbox{for all}\ v_h\in V_h,\\
  (\alpha\,u_h - y_h\, p_h,w_h-u_h)_{L^2(\Gamma)} &\ge 0 && \mbox{for all}\ w_h\in U_h^{ad}.
\end{align*}
The discrete control--to--state operator is denoted by $S_h\colon L^2(\Gamma)\to V_h$
and the control--to--adjoint operator by $Z_h\colon L^2(\Gamma)\to V_h$.
Analogous to the continuous case we compute the first and second derivatives of $j_h$ and obtain
\begin{equation}\label{eq:deriv_jh}
  j_h'(u)\delta u = \left(\alpha\,u - S_h(u)\,Z_h(u),\delta u\right)_{L^2(\Gamma)}
\end{equation}
and
\begin{equation}\label{eq:second_deriv_jh}
  j_h''(u)(\delta u,\tau u) = \left(\alpha\,\delta u - S_h(u)\, Z_h'(u)\delta u - S_h'(u)\delta u\, Z_h(u),\tau u\right)_{L^2(\Gamma)}.
\end{equation}
The first-order optimality condition reads in the short form
\begin{equation}\label{eq:opt_cond_discrete}
  (\alpha\,u_h - S_h(u_h)\,Z_h(u_h), w_h-u_h)_{L^2(\Gamma)} \ge 0 \qquad \mbox{for all}\ w_h\in U_h^{ad}.
\end{equation}

\subsection{Properties of the discrete control--to--state/adjoint operator}

In Section~\ref{sec:optimal_control} we have derived several stability and Lipschitz properties for
the operators $S$, $Z$, $S'$ and $Z'$. Here, we will derive the discrete analogues that are needed
in the following. Throughout this section we assume that \ref{ass:domain1} and \ref{ass:domain2}
are fulfilled.

\begin{lemma}\label{lem:stability_ShZh}
  There hold the following properties:
  \begin{align*}
    \|S_h(u)\|_{H^1(\Gamma)}  &\le c\left(1+\|u\|_{L^{p_1}(\Gamma)}\right)^2,\\
    \|S_h(u)\|_{L^\infty(\Omega)} &\le c\left(1+\|u\|_{L^{p_2}(\Gamma)}\right)^2,
  \end{align*}
  for $p_1, p_2 >2$ for $n=2$ and $p_1\ge 4$, $p_2> 4$ for $n=3$.
  These estimates remain valid when replacing $S_h$ by $Z_h$.
\end{lemma}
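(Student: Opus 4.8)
The plan is to derive both discrete bounds by comparison with the continuous solution $y:=S(u)$ and by exploiting the finite element error estimates of Section~\ref{sec:fem}. As a preliminary I would record the discrete energy stability $\|S_h(u)\|_{H^1(\Omega)}\le c$: testing \eqref{eq:fem} with $v_h=y_h$ and using the coercivity of $a_u$ on $V_h$ (which holds verbatim as in Lemma~\ref{lem:lax_milgram}) yields this bound exactly as in the continuous case, and the trace embedding $H^1(\Omega)\hookrightarrow L^4(\Gamma)$ then gives $\|y_h\|_{L^4(\Gamma)}\le c\,\|y_h\|_{H^1(\Omega)}\le c$.

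For the $H^1(\Gamma)$ estimate I would insert the interpolant and split
\[
\|y_h\|_{H^1(\Gamma)}\le \|\Pi_h y\|_{H^1(\Gamma)}+\|\Pi_h y-y_h\|_{H^1(\Gamma)}.
\]
The first term is controlled by the $H^1(\Gamma)$-stability of $\Pi_h$ from \eqref{eq:int_error_boundary} together with the continuous bound $\|y\|_{H^1(\Gamma)}\le c(1+\|u\|_{L^{p_1}(\Gamma)})$ of Lemma~\ref{lem:stability_SZ}. Since $\Pi_h y-y_h$ is a trace of a $V_h$-function, an inverse inequality on the quasi-uniform boundary mesh $\mathcal E_h$ (valid under \ref{ass:domain2}) gives $\|\Pi_h y-y_h\|_{H^1(\Gamma)}\le c\,h^{-1}\|\Pi_h y-y_h\|_{L^2(\Gamma)}$. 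The $L^2(\Gamma)$-term I would bound via the triangle inequality through the boundary interpolation error \eqref{eq:int_error_boundary} and the suboptimal estimate of Lemma~\ref{lem:fe_error_suboptimal}, both of which carry a factor $h$; after inserting $\|y\|_{H^{3/2}(\Omega)}+\|y\|_{H^1(\Gamma)}\le c(1+\|u\|_{L^{p_1}(\Gamma)})$ this yields $\|\Pi_h y-y_h\|_{L^2(\Gamma)}\le c\,h\,(1+\|u\|_{L^{p_1}(\Gamma)})^2$. The factor $h^{-1}$ from the inverse estimate cancels the $h$ and produces the claimed bound $c(1+\|u\|_{L^{p_1}(\Gamma)})^2$.

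The $L^\infty(\Omega)$ estimate is the delicate part, and in three dimensions the naive route fails: the available regularity is only $H^{3/2}(\Omega)$, which is critical for the embedding into $L^\infty(\Omega)$, so combining a global inverse inequality with the $L^2(\Omega)$-error is not admissible. Instead I would recast $y_h$ as a Ritz projection for the pure Neumann form $a_0(y,v):=(\nabla y,\nabla v)_{L^2(\Omega)}+(y,v)_{L^2(\Omega)}$. Indeed \eqref{eq:fem} reads $a_0(y_h,v_h)=(f,v_h)_{L^2(\Omega)}+(g-u\,y_h,v_h)_{L^2(\Gamma)}$, so $y_h$ is the $a_0$-Ritz projection $R_h^{a_0}\tilde y$ of the function $\tilde y\in H^1(\Omega)$ solving the continuous Neumann problem with boundary datum $g-u\,y_h$. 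Here the hypothesis $p_2>4$ (for $n=3$) enters decisively: combined with the trace embedding $H^1(\Omega)\hookrightarrow L^4(\Gamma)$ and $\|y_h\|_{H^1(\Omega)}\le c$, Hölder's inequality gives $u\,y_h\in L^s(\Gamma)$ for some $s>n-1$ with $\|u\,y_h\|_{L^s(\Gamma)}\le c\,\|u\|_{L^{p_2}(\Gamma)}$. Lemma~\ref{lem:props_S}b) applied to the Neumann problem, whose constant is now independent of $u$ since the boundary coefficient is absent, then yields $\tilde y\in C(\overline\Omega)$ with $\|\tilde y\|_{C(\overline\Omega)}\le c(1+\|u\|_{L^{p_2}(\Gamma)})$. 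It remains to invoke the $L^\infty$-stability of the $a_0$-Ritz projection on quasi-uniform meshes, i.e. $\|y_h\|_{L^\infty(\Omega)}=\|R_h^{a_0}\tilde y\|_{L^\infty(\Omega)}\le c\,\|\tilde y\|_{C(\overline\Omega)}$, which is classical for $n\le 3$, giving the bound $c(1+\|u\|_{L^{p_2}(\Gamma)})^2$. In the planar case one may alternatively combine the continuous bound of Lemma~\ref{lem:stability_SZ} with the maximum-norm error estimate of Theorem~\ref{thm:fe_error_linfty}, at the cost of assuming the higher $W^{2,q}(\Omega)$-regularity.

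Finally, the bounds for $Z_h$ follow by the same two arguments, the only change being that the right-hand side of the discrete adjoint problem is $(y_h-y_d,\cdot)_{L^2(\Omega)}$; since $\|y_h-y_d\|_{L^2(\Omega)}\le c\,(1+\|y_h\|_{H^1(\Omega)})\le c$ by the energy stability above, every step carries over. I expect the main obstacle to be the three-dimensional $L^\infty(\Omega)$-bound, specifically the justification of the $L^\infty$-stability of the Neumann Ritz projection with the stated, $u$-independent constant, and the precise bookkeeping of the exponent $s>n-1$ that forces $p_2>4$.
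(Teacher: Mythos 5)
Your treatment of the $H^1(\Gamma)$-bound and of the extension to $Z_h$ coincides, up to a reordering of the triangle inequality, with the paper's own proof: both rest on the boundary interpolation estimate \eqref{eq:int_error_boundary}, an inverse inequality on $\mathcal E_h$, the suboptimal estimate of Lemma~\ref{lem:fe_error_suboptimal} and the continuous stability from Lemma~\ref{lem:stability_SZ}. Likewise, your reduction of the $L^\infty(\Omega)$-bound --- rewriting $y_h=S_h(u)$ as the Neumann Ritz projection of the auxiliary function $\tilde y$ with boundary datum $g-u\,y_h$, and bounding $\|\tilde y\|_{C(\overline\Omega)}\le c\,(1+\|u\|_{L^{p_2}(\Gamma)})$ via H\"older and Lemma~\ref{lem:props_S}b), with $p_2>4$ entering exactly as you say --- is precisely the paper's first step.

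The gap is the final step of the $L^\infty$-argument. You invoke $\|R_h^{a_0}\tilde y\|_{L^\infty(\Omega)}\le c\,\|\tilde y\|_{C(\overline\Omega)}$ as ``classical for $n\le 3$''. For piecewise linear elements such log-free max-norm stability of the Ritz projection is not merely hard to locate, it is false in general: stability with an $h$-independent constant would imply the best-approximation property $\|\tilde y-R_h^{a_0}\tilde y\|_{L^\infty(\Omega)}\le c\,\inf_{v_h\in V_h}\|\tilde y-v_h\|_{L^\infty(\Omega)}$ (write $\tilde y-R_h^{a_0}\tilde y=(\tilde y-v_h)-R_h^{a_0}(\tilde y-v_h)$), and for linear elements the logarithmic factor in this property is known to be sharp (Haverkamp). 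So your route can at best deliver $\|S_h(u)\|_{L^\infty(\Omega)}\le c\,\lvert\ln h\rvert\,(1+\|u\|_{L^{p_2}(\Gamma)})^2$, which is weaker than the statement; moreover, even the logarithmic version is not citable off the shelf in the present setting of curved elements and Neumann/Robin boundary conditions --- which is exactly why the paper proves its max-norm results by hand (Appendix~\ref{app:maximum_norm_estimate}). The paper's proof of this lemma avoids pure max-norm stability altogether: with $x^*$ the point where $|y_h|$ is maximal and $\delta^h$ a regularized delta function it writes $\|y_h\|_{L^\infty(\Omega)}\le c\,(\delta^h,\tilde y-y_h)_{L^2(\Omega)}+c\,\|\tilde y\|_{L^\infty(\Omega)}$, and estimates the first term through a regularized Green's function $g^h$: by Galerkin orthogonality $(\delta^h,\tilde y-y_h)=a^{\mathrm N}(\tilde y-\Pi_h\tilde y,\,g^h-g_h^h)\le c\,h^{1/2}\,\|\tilde y\|_{H^{3/2}(\Omega)}\,\|g^h\|_{H^1(\Omega)}$, together with the elementary bound $\|g^h\|_{H^1(\Omega)}\le c\,h^{-1/2}$ obtained from coercivity and $\|\delta^h\|_{L^{6/5}(\Omega)}\le c\,h^{-1/2}$. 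This yields a stability estimate in terms of $\|\tilde y\|_{H^{3/2}(\Omega)}+\|\tilde y\|_{L^\infty(\Omega)}$, which suffices here (since $\tilde y$ has controlled $H^{3/2}$-norm) and does not contradict the sharpness of the logarithm. Your planar fallback via Theorem~\ref{thm:fe_error_linfty} does not repair the argument either, since it presupposes $W^{2,q}(\Omega)$-regularity, $u\in H^{1/2}(\Gamma)$ and $u\ge 0$, none of which are hypotheses of the lemma.
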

\begin{proof}
  We start with the estimate in the $H^1(\Gamma)$-norm.
  With the triangle inequality and an inverse estimate we obtain
  \begin{align*}
    \|S_h(u)\|_{H^1(\Gamma)}
    &\le c \left(\|S(u) - S_h (u)\|_{H^1(\Gamma)}
      + \|S(u)\|_{H^1(\Gamma)} \right)\\
    &\le c\,\big( \|S(u) - \Pi_h S(u)\|_{H^1(\Gamma)}
      + h^{-1}  \|S(u) - \Pi_h S(u)\|_{L^2(\Gamma)} \\
    &\quad + h^{-1}\,\|S(u) - S_h(u)\|_{L^2(\Gamma)} +
    \|S(u)\|_{H^1(\Gamma)}\big).
  \end{align*}
  The first two terms are bounded by the last one due to
  \eqref{eq:int_error_boundary} and it remains to apply the stability estimate from
  Lemma \ref{lem:stability_SZ}.  For the third term we apply the error estimate from Lemma
  \ref{lem:fe_error_suboptimal}. This implies the first estimate.

  We prove the maximum norm estimate only for the case $n=3$.
  In the following, we write $y_h := S_h(u)$.
  We introduce the function $\tilde y\in H^1(\Omega)$ solving the problem
  \begin{equation*}
    -\Delta \tilde y + \tilde y = f\ \mbox{in}\ \Omega,\qquad \partial_n \tilde
    y = g-u\,y_h\ \mbox{on}\ \Gamma.
  \end{equation*}
  Obviously, $y_h$ is the Neumann Ritz-projection of $\tilde y$, i.\,e.,
  \begin{equation*}
    a^{\text N}(y_h - \tilde y,v_h)=\int_\Omega\left(\nabla(y_h-\tilde y)\cdot\nabla
      v_h + (y_h-\tilde y)\,v_h \right)= 0\quad\mbox{for all}\ v_h\in V_h.
  \end{equation*}
  Let $x^*\in \bar T^*$ with $T^*\in\mathcal
  T_h$ be the point where $|y_h|$ attains its maximum. 
  With an inverse inequality and the H\"older inequality we get
  \begin{align}\label{eq:yh_linfty_start}
    \|y_h\|_{L^\infty(\Omega)} &= |y_h(x^*)| \le c\,|T^*|^{-1}\,\|y_h\|_{L^1(T^*)} \nonumber\\
    &\le c \left(|T^*|^{-1}\,\|\tilde y-y_h\|_{L^1(T^*)} + \|\tilde y\|_{L^\infty(T^*)}\right) \nonumber\\
    &= c\,(\delta^h,\tilde y-y_h)_{L^2(\Omega)} + c\,\|\tilde y\|_{L^\infty(\Omega)},
  \end{align}
  where $\delta^h$ is a regularized delta function defined by 
  $\delta^h(x) = |T^*|^{-1}\sgn(\tilde y(x)-y_h(x))$ if $x\in T^*$ and $\delta^h(x)=0$
  otherwise. The second term on the
  right-hand side can be treated with the arguments used already in 
  the proof of Lemma \ref{lem:props_S}b), namely
  \begin{align*}
    \|\tilde y\|_{L^\infty(\Omega)}
    &\le c\left(\|f\|_{L^r(\Omega)} + \|g\|_{L^s(\Gamma)} + \|u\,y_h\|_{L^{s}(\Gamma)} \right)
  \end{align*}
  with $r>3/2$ and $s=2+\varepsilon$ with $\varepsilon>0$ sufficiently small such that the following
  arguments remain valid.
  Furthermore, we estimate the last term with the H\"older inequality
  with $p_2=4\,(2+\varepsilon)/(2-\varepsilon)$ and $p'=4$ (note that $1/{p_2}+1/p'=1/s$)
  and the embedding $H^1(\Omega)\hookrightarrow L^4(\Gamma)$. This yields
  \begin{equation*}
    \|u\,y_h\|_{L^{s}(\Gamma)} \le c\,\|u\|_{L^{p_2}(\Gamma)}\,\|y_h\|_{L^{4}(\Gamma)}
    \le c\,\|u\|_{L^{p_2}(\Gamma)}\,\|y_h\|_{H^1(\Omega)}.
  \end{equation*}
  It remains to exploit stability of $S_h$ in the $H^1(\Omega)$-norm to conclude
  \begin{equation}\label{eq:linfty_est_y_tilde}
    \|\tilde y\|_{L^\infty(\Omega)} \le c\,(1+\|u\|_{L^{p_2}(\Gamma)}).
  \end{equation}
  
  The estimate for the first term on the right-hand side of \eqref{eq:yh_linfty_start} is based
  on the ideas from \cite[Section 3.6]{Win08}. First, we introduce a 
  regularized Green's function $g^h\in H^1(\Omega)$ solving the variational
  problem
  $a^{\text N}(z,g^h) = (\delta^h,z)_{L^2(\Omega)}$ for all $z\in H^1(\Omega)$.
  The Neumann Ritz-projection of $g^h$ is denoted by $g_h^h$.
  Using the Galerkin orthogonality we obtain
  \begin{align}\label{eq:est_delta_err}
    (\delta^h,\tilde y-y_h) & = a^{\text N}(\tilde y-y_h,g^h)
    = a^{\text N}(\tilde y-\Pi_h \tilde y,g^h-g_h^h)\nonumber\\
    &\le c\,h^{1/2}\,\|\tilde y\|_{H^{3/2}(\Omega)}\,\|g^h\|_{H^1(\Omega)},
  \end{align}
  where the last step follows form the stability of the Ritz projection
  and the interpolation error estimate \eqref{eq:int_error}.
  To bound the $H^1(\Omega)$-norm of $g^h$ we apply the ellipticity of
  $a^{\text N}$, the definition of $g^h$, the H\"older inequality and an embedding to
  arrive at
  \begin{align*}
    c\,\|g^h\|_{H^1(\Omega)}^2
    &\le a^{\text N}(g^h,g^h) =(\delta^h,g^h)_{L^2(\Omega)} \\
    &\le c\,\|\delta^h\|_{L^{6/5}(\Omega)}\,\|g^h\|_{L^6(\Omega)}
      \le c\,h^{-1/2}\,\|g^h\|_{H^1(\Omega)}.
  \end{align*}
  The last step follows from the property $\|\delta^h\|_{L^{6/5}(\Omega)}\le
  c\,|T^*|^{-1/6}\le c\,h^{-1/2}$ that can be confirmed with a simple
  computation.
  Insertion into \eqref{eq:est_delta_err} and taking into account \eqref{eq:yh_linfty_start} and
  \eqref{eq:linfty_est_y_tilde} yields the desired stability estimate.

  The estimates for $Z_h$ follow in a similar way.
  One just has to replace $f$ by $S_h(u)-y_d$ and the result follows from the estimates
  proved already for $S_h(u)$.
\end{proof}
\begin{lemma}\label{lem:Sh_lipschitz}
  Assume that $u,v\in L^2(\Gamma)$ satisfy the assumption
  \eqref{eq:ass_coercivity}.
  Then, the Lipschitz estimate
  \begin{equation*}
    \|S_h(u) - S_h(v)\|_{H^1(\Omega)} \le c\,\|u-v\|_{L^2(\Gamma)}
  \end{equation*}
  holds.
\end{lemma}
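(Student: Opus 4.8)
The plan is to mimic the proof of the continuous Lipschitz estimate in Lemma~\ref{lem:lipschitz_SZ}, exploiting that the discrete space $V_h$ is a conforming subspace of $H^1(\Omega)$, so that the coercivity and boundedness of $a_u$ established in Lemma~\ref{lem:lax_milgram} remain valid when the test functions are restricted to $V_h$. In particular, the discrete Lax--Milgram argument yields the a priori bound $\|S_h(v)\|_{H^1(\Omega)}\le c$ for every $v$ fulfilling \eqref{eq:ass_coercivity}, which is exactly the discrete stability already invoked in the proof of Lemma~\ref{lem:stability_ShZh}.

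First I would abbreviate $y_h:=S_h(u)$ and $z_h:=S_h(v)$, and subtract the two defining variational identities $a_u(y_h,w_h)=F(w_h)$ and $a_v(z_h,w_h)=F(w_h)$. Writing $u\,y_h-v\,z_h=u\,(y_h-z_h)+(u-v)\,z_h$ in the boundary term, this produces the discrete error equation
\begin{equation*}
  a_u(y_h-z_h,w_h)=-\big((u-v)\,z_h,w_h\big)_{L^2(\Gamma)}\qquad\forall w_h\in V_h,
\end{equation*}
which is the discrete counterpart of the identity behind Lemma~\ref{lem:lipschitz_general}.

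Next I would test with $w_h=y_h-z_h$ and apply the discrete coercivity of $a_u$ to bound the left-hand side from below by $\gamma_u\,\|y_h-z_h\|_{H^1(\Omega)}^2$. For the right-hand side I would use the generalized H\"older inequality together with the trace embedding $H^1(\Omega)\hookrightarrow L^4(\Gamma)$, exactly as in \eqref{eq:remainder_term}, to estimate
\begin{equation*}
  \big|\big((u-v)\,z_h,y_h-z_h\big)_{L^2(\Gamma)}\big|
  \le c\,\|u-v\|_{L^2(\Gamma)}\,\|z_h\|_{H^1(\Omega)}\,\|y_h-z_h\|_{H^1(\Omega)}.
\end{equation*}
Inserting the stability bound $\|z_h\|_{H^1(\Omega)}=\|S_h(v)\|_{H^1(\Omega)}\le c$ and cancelling one factor of $\|y_h-z_h\|_{H^1(\Omega)}$ (the degenerate case $y_h=z_h$ being trivial) then yields the claim.

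There is no genuine obstacle here beyond bookkeeping; the estimate is the verbatim discrete translation of the continuous argument. The only point deserving care is that the boundary product must be handled through the $L^4(\Gamma)$ trace embedding rather than through the $H^{-1/2}(\Gamma)$--$H^{1/2}(\Gamma)$ duality used in \eqref{eq:multiplication_H-12}, since discrete functions do not gain boundary regularity; as we only need an $H^1(\Omega)$ bound, this causes no difficulty.
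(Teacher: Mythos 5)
Your proof is correct and is essentially the paper's own argument: the paper's proof of Lemma~\ref{lem:Sh_lipschitz} simply states that it follows ``with the same arguments as in the continuous case'' (Lemmata~\ref{lem:lipschitz_general} and~\ref{lem:lipschitz_SZ}), which is precisely what you carry out --- the discrete error equation, coercivity of $a_u$ on the conforming subspace $V_h\subset H^1(\Omega)$, the H\"older/$L^4(\Gamma)$-trace bound, and the discrete stability $\|S_h(v)\|_{H^1(\Omega)}\le c$. One small correction to your closing remark: the $H^{-1/2}(\Gamma)$--$H^{1/2}(\Gamma)$ duality route of \eqref{eq:multiplication_H-12} would work equally well here, since $z_h=S_h(v)\in V_h\subset H^1(\Omega)$ does have a trace in $H^{1/2}(\Gamma)$ by conformity; your $L^4(\Gamma)$ treatment is an equivalent choice, not a forced one.
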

\begin{proof}
  The proof follows with the same arguments as in the continuous case, see Lemmata
  \ref{lem:lipschitz_general} and \ref{lem:lipschitz_SZ}.
\end{proof}

Next, we discuss some error estimates for the approximation of the control-to-state and control-to-adjoint operator.
While estimates for $S_h$ and $Z_h$ are a direct consequence of Lemma 
\ref{lem:fe_error_suboptimal}, the results for the linearized operators $S_h'$ and $Z_h'$
require some more effort as for instance
$S'(u)\delta u - S_h'(u)\delta u$ does not fulfill the Galerkin orthogonality.
\begin{lemma}\label{lem:fe_error_h1}
  For each $u\in U_{ad}$ and $\delta u\in L^2(\Gamma)$
  the error estimates
  \begin{align*}
    \|S(u) - S_h(u)\|_{H^1(\Omega)} &\le c\,h^{1/2}\,(1+\|u\|_{L^p(\Gamma)}),\\
    \|S'(u)\delta u - S_h'(u)\delta u \|_{H^1(\Omega)}&\le c\,h^{1/2}\,(1+\|u\|_{L^p(\Gamma)})^3\,\|\delta u\|_{L^2(\Gamma)}
  \end{align*}
  are valid for $p>2$ for $n=2$ and $p\ge 4$ for $n=3$.
  The results are also valid when replacing $S$ and $S_h$ by $Z$ and $Z_h$,
  as well as $S'$ and $S_h'$ by $Z'$ and $Z_h'$, respectively.
\end{lemma}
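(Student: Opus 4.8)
The plan is to prove the two error estimates separately, beginning with the estimate for $S(u)-S_h(u)$ and then leveraging it to handle the linearized operator. For the first estimate I would invoke Lemma~\ref{lem:fe_error_suboptimal}, which already gives the $L^2(\Gamma)$-error of order $h$ for solutions in $H^{3/2}(\Omega)$. However, the claim here is an $H^1(\Omega)$-estimate of order $h^{1/2}$, so instead I would apply Lemma~\ref{lem:h1_l2_error} with $s=3/2$, which directly yields
\begin{equation*}
  \|S(u)-S_h(u)\|_{H^1(\Omega)} \le c\,h^{1/2}\,\|S(u)\|_{H^{3/2}(\Omega)}.
\end{equation*}
The regularity $S(u)\in H^{3/2}(\Omega)$ together with the bound $\|S(u)\|_{H^{3/2}(\Omega)}\le c\,(1+\|u\|_{L^p(\Gamma)})$ comes from Lemma~\ref{lem:props_S}a), under the stated assumptions on $p$. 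This establishes the first inequality.

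For the second estimate the difficulty is that $\delta y := S'(u)\delta u$ and its discrete counterpart $\delta y_h := S_h'(u)\delta u$ do \emph{not} solve equations with the same right-hand side in the finite element sense: the continuous tangent equation has boundary datum $-y\,\delta u$ with $y=S(u)$, whereas the discrete tangent equation uses $y_h=S_h(u)$. Consequently the difference $\delta y - \delta y_h$ fails to satisfy Galerkin orthogonality, as the statement itself points out. The plan is to split the error via an auxiliary function $\widehat{\delta y}_h\in V_h$ defined as the discrete solution of the tangent equation with the \emph{exact} right-hand side $-y\,\delta u$, that is $a_u(\widehat{\delta y}_h,v_h) = -(y\,\delta u,v_h)_{L^2(\Gamma)}$ for all $v_h\in V_h$. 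Then I would write
\begin{equation*}
  \|\delta y - \delta y_h\|_{H^1(\Omega)} \le \|\delta y - \widehat{\delta y}_h\|_{H^1(\Omega)} + \|\widehat{\delta y}_h - \delta y_h\|_{H^1(\Omega)}.
\end{equation*}
The first term does satisfy Galerkin orthogonality and is bounded exactly as above by Lemma~\ref{lem:h1_l2_error}, requiring the $H^{3/2}(\Omega)$-regularity and stability of $\delta y$ from Lemma~\ref{lem:stability_SZ_prime}, which supplies the factor $(1+\|u\|_{L^p(\Gamma)})^3$.

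The main obstacle is the second term $\|\widehat{\delta y}_h - \delta y_h\|_{H^1(\Omega)}$, which quantifies the mismatch between using $y$ and $y_h$ in the boundary data. Here both functions lie in $V_h$ and their difference solves a discrete problem whose right-hand side is $-((y-y_h)\,\delta u,v_h)_{L^2(\Gamma)}$. Applying the discrete coercivity of $a_u$ on $V_h$ (uniform in $h$ by the same Lax--Milgram argument as Lemma~\ref{lem:lax_milgram}) reduces the task to estimating $\|(y-y_h)\,\delta u\|_{H^1(\Omega)^*}$, or more directly the linear functional it induces. Using the multiplication rule \eqref{eq:multiplication_H-12} or the generalized H\"older inequality together with the embedding $H^1(\Omega)\hookrightarrow L^4(\Gamma)$, I would bound this by $c\,\|y-y_h\|_{L^4(\Gamma)}\,\|\delta u\|_{L^2(\Gamma)}$, and then control $\|y-y_h\|_{L^4(\Gamma)}$ through the already-established $H^1(\Omega)$-error of $S$ and a trace embedding, producing another factor of order $h^{1/2}$. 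Collecting the powers of $(1+\|u\|_{L^p(\Gamma)})$ from the stability bound on $\delta y$ and from $\|y-y_h\|_{H^1(\Omega)}$ yields the stated cubic dependence. Finally, the identical argument transfers to $Z$ and $Z'$ by replacing $f$ with $S(u)-y_d$ and using that $\|S(u)-y_d\|$ is bounded as noted in Lemma~\ref{lem:stability_SZ}, so the regularity and stability inputs are unchanged.
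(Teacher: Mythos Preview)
Your proposal is correct and follows essentially the same route as the paper: the first estimate via Lemma~\ref{lem:h1_l2_error} combined with the $H^{3/2}$-stability of $S$, and the second via the same auxiliary Galerkin projection $\widehat{\delta y}_h$ (the paper's $\delta\tilde y_h$) to restore Galerkin orthogonality, then bounding the data-mismatch term through the multiplication rule \eqref{eq:multiplication_H-12} and the already-proved $H^1$-error for $S(u)-S_h(u)$. The only cosmetic difference is that the paper packages the discrete-coercivity-plus-multiplication-rule step by invoking Lemma~\ref{lem:lipschitz_general} (noting it holds verbatim for discrete solutions), whereas you spell it out directly.
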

\begin{proof}
  The first estimate is just a combination of the Lemmata \ref{lem:h1_l2_error} and
  \ref{lem:stability_SZ}.
  To show the estimate for the linearized operators we introduce again 
  the abbreviations $y:=S(u)$, $y_h:=S_h(u)$, $\delta y := S'(u)\delta u$ and $\delta y_h:= S_h'(u)\delta u$.
  Moreover, define the auxiliary function $\delta \tilde y_h\in V_h$ as the solution of
  \begin{equation*}
    a_u(\delta \tilde y_h,v_h) = (y\,\delta u,v_h)_{L^2(\Gamma)}\qquad \forall v_h\in V_h.
  \end{equation*}
  This function fulfills the Galerkin orthogonality, i.\,e.,
  $a_u(\delta y-\delta \tilde y_h,v_h) = 0$ for all $v_h\in V_h$.
  Hence, we obtain with Lemma \ref{lem:h1_l2_error} and the Lipschitz-property from Lemma
  \ref{lem:lipschitz_general} (note that this Lemma is also valid for the discrete solutions)
  \begin{align*}
    \|\delta y - \delta y_h\|_{H^1(\Omega)}
    &\le c\left(\|\delta y - \delta \tilde y_h\|_{H^1(\Omega)}
      + \|\delta \tilde y_h - \delta y_h\|_{H^1(\Omega)} \right)\\
    &\le c\left(h^{1/2}\,\|\delta y\|_{H^{3/2}(\Omega)}
      + \|\delta u\,(y-y_h)\|_{H^{-1/2}(\Gamma)}\right).
  \end{align*}
  For the first term we simply insert the second estimate from Lemma \ref{lem:stability_SZ_prime}.
  The second term on the right-hand side is further estimated by means of
  \cite[Theorem 1.4.4.2]{Gri85} and a trace theorem which yield
  \begin{equation*}
    \|\delta u\,(y-y_h)\|_{H^{-1/2}(\Gamma)} \le c\,\|\delta u\|_{L^2(\Gamma)} \,\|y-y_h\|_{H^1(\Omega)},
  \end{equation*}
  and the assertion follows after an application of the estimate shown already for $S(u)-S_h(u)$.
  The estimates for $Z$ and $Z'$ follow with similar arguments.
\end{proof}

\subsection{Convergence of the fully discrete solutions}
\label{sec:full_discretization}

Throughout this subsection we assume that the properties \ref{ass:domain1} and \ref{ass:domain2}
are fulfilled. These assumptions are needed to guarantee the required regularity of the solution
and the validity of interpolation error estimates.

As the solutions of both the continuous and discrete optimal control problem \eqref{eq:target} and \eqref{eq:discrete_target}, respectively, are not unique
we have to construct a sequence of discrete local solutions converging towards a continuous one.
The first question which arises is whether such a sequence exists. To this end, we introduce
a localized problem
\begin{equation}\label{eq:discrete_local}
  j_h(u_h)\to\min! \quad \mbox{s.\,t.}\ u_h\in U_h^{ad}\cap B_\varepsilon(\bar u),
\end{equation}
where $\bar u\in U_{ad}$ is a fixed local solution of \eqref{eq:target} and $\varepsilon>0$ is some small parameter. First, we show that this problem possesses a unique local solution
which would immediately follow if we could show that the coercivity discussed in Corollary
\ref{cor:coervice_neighb} is transferred to the discrete case.
The following arguments are similar to the investigations in \cite{CMT05}, in
particular Theorem 4.4 and 4.5 therein.
\begin{lemma}\label{lem:discrete_coerc}
  Let $\bar u\in U_{ad}$ be a local solution of \eqref{eq:target}.
  Assume that $\varepsilon>0$ and $h>0$ are sufficiently small. Then, the inequality
  \begin{equation*}
    j_h''(u)\delta u^2\ge \frac\delta4\|\delta u\|_{L^2(\Gamma)}^2
  \end{equation*}
  is valid for all $u$ satisfying $\|u-\bar u\|_{L^2(\Gamma)} \le
  \varepsilon$. 
\end{lemma}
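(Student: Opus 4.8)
The plan is to show that the continuous coercivity from Corollary~\ref{cor:coervice_neighb} transfers to the discrete Hessian $j_h''$ by estimating the difference $\lvert j''(u)\delta u^2 - j_h''(u)\delta u^2\rvert$ and absorbing it into the coercivity constant $\delta$. Comparing the representations \eqref{eq:second_deriv_j} and \eqref{eq:second_deriv_jh}, both second derivatives have the same $\alpha\|\delta u\|_{L^2(\Gamma)}^2$ term, so this difference reduces to bounding
\[
  \big\lvert(p\,\delta y + y\,\delta p - S_h(u)\,S_h'(u)\delta u - Z_h(u)\,S_h'(u)\delta u,\delta u)_{L^2(\Gamma)}\big\rvert,
\]
where I use the shorthand $y=S(u)$, $p=Z(u)$, $\delta y = S'(u)\delta u$, $\delta p = Z'(u)\delta u$ and the corresponding discrete quantities. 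If I can show this is bounded by $c\,\eta(h)\,\|\delta u\|_{L^2(\Gamma)}^2$ with $\eta(h)\to 0$ as $h\to 0$, then for $h$ small enough the discrete Hessian inherits at least $\tfrac\delta4$-coercivity on the $\varepsilon$-ball.

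\textbf{The key steps.} First I would insert mixed terms to split each product difference into contributions coming separately from the state/adjoint error and the error in the linearized operators, e.g.
\[
  p\,\delta y - Z_h(u)\,S_h'(u)\delta u = (p-Z_h(u))\,\delta y + Z_h(u)\,(\delta y - S_h'(u)\delta u),
\]
and analogously for $y\,\delta p$. Each of the four resulting terms is then paired against $\delta u$ and estimated by the Cauchy--Schwarz inequality together with the embedding $H^1(\Omega)\hookrightarrow L^4(\Gamma)$, which converts the $H^1(\Omega)$-norms of the factors into the $L^4(\Gamma)$-norms appearing on the boundary. For the factors carrying a discretization error I would apply the finite element error estimates of Lemma~\ref{lem:fe_error_h1}: these supply $\|S(u)-S_h(u)\|_{H^1(\Omega)}$, $\|Z(u)-Z_h(u)\|_{H^1(\Omega)}$ and the corresponding linearized estimates, all of order $h^{1/2}$ up to the usual polynomial factor in $\|u\|_{L^p(\Gamma)}$. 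The remaining factors (e.g.\ $\delta y$, $Z_h(u)$) are controlled by the stability estimates of Lemmata~\ref{lem:stability_SZ}, \ref{lem:stability_SZ_prime} and \ref{lem:stability_ShZh}, where the $L^p(\Gamma)$-norms of the argument $u$ stay uniformly bounded because $\|u-\bar u\|_{L^2(\Gamma)}\le\varepsilon$ and $\bar u\in L^\infty(\Gamma)\cap H^1(\Gamma)$ by Lemma~\ref{lem:regularity_general}. Collecting everything yields the bound $\eta(h)=c\,h^{1/2}$.

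\textbf{Conclusion and main obstacle.} Given that estimate, I would write
\[
  j_h''(u)\delta u^2 \ge j''(u)\delta u^2 - \lvert j''(u)\delta u^2 - j_h''(u)\delta u^2\rvert
  \ge \Big(\frac\delta2 - c\,h^{1/2}\Big)\|\delta u\|_{L^2(\Gamma)}^2,
\]
using Corollary~\ref{cor:coervice_neighb} for the first term, and then choose $h$ so small that $c\,h^{1/2}\le\delta/4$, which delivers the claimed $\tfrac\delta4$-coercivity. The main obstacle is purely technical: ensuring that the stability factors depending on $\|u\|_{L^p(\Gamma)}$ (rather than $\|u\|_{L^2(\Gamma)}$) remain uniformly bounded over the entire $\varepsilon$-ball. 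This is where I rely on the extra regularity $\bar u\in L^\infty(\Gamma)$ and the positive control bounds $0<u_a<u_b$, which keep every $u$ in the ball within the coercivity regime \eqref{eq:ass_coercivity} and within a fixed $L^p(\Gamma)$-bound, so that the generic constant $c$ can indeed be taken independent of $u$ and of $h$.
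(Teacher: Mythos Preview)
Your proposal is correct and follows essentially the same route as the paper: split $j''(u)\delta u^2 - j_h''(u)\delta u^2$ into product differences, estimate each factor via $H^1(\Omega)\hookrightarrow L^4(\Gamma)$ combined with the $h^{1/2}$ error bounds of Lemma~\ref{lem:fe_error_h1} and the stability results of Lemmata~\ref{lem:stability_SZ}, \ref{lem:stability_SZ_prime}, \ref{lem:stability_ShZh}, and then choose $h$ so that $c\,h^{1/2}\le\delta/4$. There is a typo in your first displayed difference (both discrete terms carry $S_h'(u)\delta u$; one of them should be $S_h(u)\,Z_h'(u)\delta u$), but the splitting you write out afterwards is the correct one; the paper likewise handles the uniform $L^p(\Gamma)$-boundedness of $u$ simply by invoking the control bounds, so your treatment of that obstacle matches as well.
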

\begin{proof}
  With the explicit representations of $j''$ and $j_h''$ from \eqref{eq:second_deriv_j}
  and \eqref{eq:second_deriv_jh}, respectively, and Corollary \ref{cor:coervice_neighb}, we obtain
  \begin{align}\label{eq:discrete_corec_initial}
    &\qquad \frac\delta2 \|\delta u\|_{L^2(\Gamma)}^2
    \le \left(j''(u)\delta u^2 - j_h''(u)\delta u^2\right) +
      j_h''(u)\delta u^2 \nonumber\\
    &\le \Big( \|y_h\, \delta p_h - y\, \delta p\|_{L^2(\Gamma)}
     + \|\delta y_h\, p_h -  \delta y\,p\|_{L^2(\Gamma)} \Big)\,\|\delta u\|_{L^2(\Gamma)} + j_h''(u)\delta u^2,
  \end{align}
  with $y=S(u)$, $p = Z(u)$, $\delta y= S'(u)\delta u$
  and $\delta p=Z'(u)\delta u$, and the discrete analogues $y_h=S_h(u)$,
  $p_h = Z_h(u)$, $\delta y_h= S_h'(u)\delta u$ and $\delta p_h=Z_h'(u)\delta u$.
  It remains to bound the two norms in parentheses appropriately. Therefore, we apply
  the triangle inequality,  
  the stability properties for $S'$, $S_h$, $Z'$ and $Z_h$ from
  Lemmata \ref{lem:stability_SZ}, \ref{lem:stability_SZ_prime} and \ref{lem:stability_ShZh}
  as well as the error estimates from Lemma \ref{lem:fe_error_h1}.
  Note that the control bounds provide the regularity for $u$ that is required for these estimates.
  As a consequence we obtain
  \begin{align*}
    \|y_h\, \delta p_h - y\, \delta p\|_{L^2(\Gamma)}
    & \le c\left(\|y-y_h\|_{H^1(\Omega)}\,\|\delta p\|_{H^1(\Omega)}
      + \|\delta p-\delta p_h\|_{H^1(\Omega)}\,\|y_h\|_{H^1(\Omega)}\right) \\
    &\le c\,h^{1/2}\,\|\delta u\|_{L^2(\Gamma)}.
  \end{align*}
  With similar arguments we can show
  \begin{align*}
    \|\delta y_h\, p_h -  \delta y \,p\|_{L^2(\Gamma)}
    & \le c \left(\|\delta y - \delta y_h\|_{H^1(\Omega)}\,\|p_h\|_{H^1(\Omega)} 
      +  \|p - p_h\|_{H^1(\Omega)}\,\|\delta y\|_{H^1(\Omega)}\right) \\
    &\le c\, h^{1/2}\,\|\delta u\|_{L^2(\Gamma)}.
  \end{align*}
  The previous two estimates together with \eqref{eq:discrete_corec_initial} imply
  \begin{equation*}
    \frac\delta2\, \|\delta u\|_{L^2(\Gamma)}^2
    \le c\,h^{1/2}\,\|\delta u\|_{L^2(\Gamma)}^2 + j_h''(\bar u)\delta u^2.
  \end{equation*}
  Choosing $h$ sufficiently small such that $c\, h^{1/2} \le \frac\delta4$ leads to the assertion.  
\end{proof}

\begin{theorem}\label{thm:convergence_fully_discrete}
  Let $\bar u\in U_{ad}$ be a local solution of \eqref{eq:target}
  satisfying Assumption~\ref{ass:ssc}.  
  Assume that $\varepsilon>0$ and $h_0>0$ are sufficiently small. Then, the auxiliary problem \eqref{eq:discrete_local} possesses a unique solution for each $h\le h_0$ denoted by $\bar u_h^\varepsilon$, and there holds
  \begin{equation*}
    \lim_{h\to 0} \|\bar u-\bar u_h^\varepsilon\|_{L^2(\Gamma)} = 0.
  \end{equation*}
\end{theorem}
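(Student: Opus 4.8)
The plan is to establish existence and uniqueness of the solution $\bar u_h^\varepsilon$ to the localized problem \eqref{eq:discrete_local} first, and then prove convergence. For existence, I would observe that $U_h^{ad}\cap B_\varepsilon(\bar u)$ is a nonempty (for $h$ small, since $\Pi_h\bar u$ or a suitable piecewise-constant approximation lies in it), closed, bounded, convex and finite-dimensional set, and that $j_h$ is continuous on it. Hence the Weierstrass theorem yields at least one minimizer. For uniqueness I would invoke Lemma \ref{lem:discrete_coerc}: since $j_h$ is twice continuously differentiable and $j_h''(u)\delta u^2\ge\frac\delta4\|\delta u\|_{L^2(\Gamma)}^2$ for all $u$ in the $\varepsilon$-ball, the functional $j_h$ is strictly (indeed uniformly) convex on $U_h^{ad}\cap B_\varepsilon(\bar u)$, so the minimizer is unique. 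This is the routine part.

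For the convergence statement, I would argue by a standard compactness/optimality comparison. Let $\bar u_h^\varepsilon$ denote the unique discrete solutions. First I would extract the quadratic growth afforded by Lemma \ref{lem:discrete_coerc}: applying a Taylor expansion of $j_h$ around $\bar u_h^\varepsilon$ together with its first-order optimality condition and the uniform coercivity gives
\begin{equation*}
  \frac\delta8\,\|w_h-\bar u_h^\varepsilon\|_{L^2(\Gamma)}^2 \le j_h(w_h)-j_h(\bar u_h^\varepsilon)\qquad\forall w_h\in U_h^{ad}\cap B_\varepsilon(\bar u).
\end{equation*}
I would then choose $w_h$ to be a suitable discrete approximation of $\bar u$, for instance $w_h = R_h\bar u$ where $R_h$ is the $L^2(\Gamma)$-projection onto piecewise constants; by the regularity $\bar u\in H^1(\Gamma)$ from Lemma \ref{lem:regularity_general} one has $\|\bar u - R_h\bar u\|_{L^2(\Gamma)}\to 0$, so $w_h\in B_\varepsilon(\bar u)$ for $h$ small.

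The core estimate is to bound $j_h(w_h)-j_h(\bar u_h^\varepsilon)$ from above by a quantity tending to zero. I would compare the discrete functional values to the continuous ones: using the error estimates for $S_h,Z_h$ versus $S,Z$ from Lemma \ref{lem:fe_error_h1} and the Lipschitz/stability bounds of Lemmata \ref{lem:stability_SZ}--\ref{lem:lipschitz_SZ}, one shows $|j_h(v)-j(v)|\to 0$ uniformly for $v$ in the bounded ball. Combining this with the optimality of $\bar u$ for the continuous problem ($j(\bar u)\le j(w_h)$ need not hold, but $j(w_h)\to j(\bar u)$ by continuity of $j$ and $R_h\bar u\to\bar u$), one deduces
\begin{equation*}
  j_h(\bar u_h^\varepsilon)\le j_h(w_h) = j(w_h)+o(1) = j(\bar u)+o(1).
\end{equation*}
Plugging this into the growth inequality with $w_h=R_h\bar u$ and using $j_h(R_h\bar u)\to j(\bar u)$ forces $\|R_h\bar u-\bar u_h^\varepsilon\|_{L^2(\Gamma)}\to 0$, and the triangle inequality with $\|\bar u-R_h\bar u\|_{L^2(\Gamma)}\to 0$ yields the claim.

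The main obstacle I anticipate is the uniform comparison $|j_h(v)-j(v)|\to 0$, since the discrete operators $S_h,Z_h$ converge only at rate $h^{1/2}$ in $H^1(\Omega)$ (Lemma \ref{lem:fe_error_h1}) and one must track how products such as $S_h(v)Z_h(v)$ entering the objective inherit these bounds via the embedding $H^1(\Omega)\hookrightarrow L^4(\Gamma)$; care is also needed because the ball $B_\varepsilon(\bar u)$ keeps all arguments within the coercivity region so that Lemma \ref{lem:discrete_coerc} applies. A subtle point is to ensure $\bar u_h^\varepsilon$ lies in the \emph{interior} of the localization ball in the limit, so that it is genuinely a local solution of the unconstrained-on-$B_\varepsilon$ discrete problem rather than an artifact of the cutoff; this follows once convergence is established, since then $\|\bar u-\bar u_h^\varepsilon\|_{L^2(\Gamma)}<\varepsilon$ for $h$ small.
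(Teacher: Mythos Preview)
Your approach is sound and in fact more direct than the paper's, but there is a logical slip in the convergence argument. From the discrete quadratic growth you obtain
\[
  \tfrac{\delta}{8}\,\|R_h\bar u - \bar u_h^\varepsilon\|_{L^2(\Gamma)}^2 \le j_h(R_h\bar u) - j_h(\bar u_h^\varepsilon),
\]
and you must show the right-hand side tends to zero. You correctly argue $j_h(R_h\bar u)\to j(\bar u)$, but then you only establish the \emph{upper} bound $j_h(\bar u_h^\varepsilon)\le j(\bar u)+o(1)$. That inequality goes the wrong way: it gives a lower bound on $j_h(R_h\bar u)-j_h(\bar u_h^\varepsilon)$, not the upper bound you need. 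What is missing is a \emph{lower} bound on $j_h(\bar u_h^\varepsilon)$. This follows at once from the continuous local optimality of $\bar u$ (recall $\bar u_h^\varepsilon\in U_{ad}\cap B_\varepsilon(\bar u)$, so $j(\bar u)\le j(\bar u_h^\varepsilon)$) combined with your uniform comparison $|j_h(v)-j(v)|\to 0$:
\[
  j_h(\bar u_h^\varepsilon) \ge j(\bar u_h^\varepsilon) - o(1) \ge j(\bar u) - o(1).
\]
With this the sandwich closes and the rest of your argument goes through.

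Once this is fixed, your route genuinely differs from the paper's. The paper argues by weak compactness: it extracts a weakly convergent subsequence, proves strong convergence of the associated states $S_h(\bar u_h^\varepsilon)\to S(\tilde u)$ via the compact embedding $L^2(\Gamma)\hookrightarrow H^{-s}(\Gamma)$, uses lower semicontinuity to show the weak limit $\tilde u$ minimizes $j$ on $B_\varepsilon(\bar u)$ and hence equals $\bar u$, and finally upgrades to strong convergence via convergence of norms. Your argument bypasses all of this by exploiting the discrete coercivity of Lemma~\ref{lem:discrete_coerc} directly; it is essentially a qualitative version of the proof of Theorem~\ref{thm:convergence_full_disc}. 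The advantage of your approach is that it is shorter and already quantitative in spirit; the paper's compactness proof, by contrast, requires no uniform estimate for $|j_h(v)-j(v)|$ and would survive under weaker assumptions on the discretization.
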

\begin{proof}
  The existence of at least one solution of \eqref{eq:discrete_local} follows immediately
  from the compactness and non-emptyness of  $U_h^{ad}\cap B_\varepsilon(\bar u)$.
  Note that $Q_h \bar u\in U_h^{ad}\cap B_{\varepsilon}(\bar u)$ for sufficiently small $h>0$, this
  confirms that the feasible set is non-empty.
  Due to Lemma \ref{lem:discrete_coerc} this solution is unique.

  Moreover, the family $\{\bar u_h^\varepsilon\}_{h\le h_0}$ is bounded and hence, a weakly convergent 
  sequence $\{\bar u_{h_k}^\varepsilon\}_{k\in\mathbb N}$ with $h_k \searrow 0$
  exists.
  The weak limit is denoted by $\tilde u\in L^2(\Gamma)$
  and from the convexity of the feasible set we deduce $\tilde u\in U_h^{ad}\cap B_\varepsilon(\bar u)$.
  Without loss of generality it is assumed that $\bar u_h^\varepsilon\rightharpoonup \tilde u$ in $L^2(\Gamma)$ as $h\searrow 0$.

  Next, we show that $\tilde u$ is a local minimum of the continuous problem.
  First, we show the convergence of the corresponding states
  which follows with the arguments from \cite{CM02}.
  First, we employ the triangle inequality to get
  \begin{equation}\label{eq:conv_state_triangle}
    \|S(\tilde u) - S_h(\bar u_h^\varepsilon)\|_{H^1(\Omega)} \le
    c\left( \|S(\tilde u) - S_h(\tilde u)\|_{H^1(\Omega)} + \|S_h(\tilde u) - S_h(\bar u_h^\varepsilon)\|_{H^1(\Omega)} \right).
  \end{equation}
  For the first term on the right-hand side
  we exploit convergence of the finite element method proved in
  Lemma \ref{lem:fe_error_h1}  which yields
  \begin{equation*}
    \|S(\tilde u) - S_h(\tilde u)\|_{H^1(\Omega)}\to 0,\quad h\searrow 0.
  \end{equation*}
  With similar arguments as in the proof of Lemma \ref{lem:lipschitz_general}
  we moreover deduce
  \begin{align*}
    \|S_h(\tilde u) - S_h(\bar u_h^\varepsilon)\|_{H^1(\Omega)}^2
    &= -(\tilde u\,S_h(\tilde u) - \bar u_h^\varepsilon\,S_h(\bar u_h^\varepsilon), S_h(\tilde u) - S_h(\bar u_h^\varepsilon))_{L^2(\Gamma)} \\
    &= -((\tilde u-\bar u_h^\varepsilon)\,S_h(\tilde u), S_h(\tilde u) - S_h(\bar u_h^\varepsilon))_{L^2(\Gamma)} \\
    &- \int_\Gamma \bar u_h^\varepsilon\,(S_h(\tilde u) - S_h(\bar u_h^\varepsilon))^2.
    % &\le c \|\tilde u-\bar u_h^\varepsilon\|_{H^{-1/2}(\Gamma)} \|S_h(\tilde u)\|_{H^1(\Gamma)} \|S_h(\tilde u) - S_h(\bar u_h^\varepsilon)\|_{H^1(\Omega)} 
  \end{align*}
  The integral term on the right-hand side is non-negative due to the lower control bounds
  $\bar u_h^\varepsilon \ge u_a\ge 0$. We can bound the first term on the right-hand side with
  the Cauchy-Schwarz inequality and the multiplication rule from \cite[Theorem 1.4.4.2]{Gri85}
  which provides
  \begin{align*}
    &    \|S_h(\tilde u) - S_h(\bar u_h^\varepsilon)\|_{H^1(\Omega)}^2  \le \|\tilde u - \bar u_h^\varepsilon\|_{H^{-s}(\Gamma)}
      \, \|S_h(\tilde u)\|_{H^1(\Gamma)} \, \|S_h(\tilde u) - S_h(\bar u_h^\varepsilon)\|_{H^1(\Omega)}
  \end{align*}
  for arbitrary $s\in(0,1/2)$.
  Note that there holds $\|\tilde u - \bar u_h^\varepsilon\|_{H^{-s}(\Gamma)}\to 0$ for $h\searrow 0$
  due to the compact embedding $L^2(\Gamma)\hookrightarrow H^{-s}(\Gamma)$, $s>0$.
  It remains to bound the second factor on the right-hand side by an application of Lemma
  \ref{lem:stability_ShZh} and to divide the whole estimate by the third factor.
  After insertion of this estimate into \eqref{eq:conv_state_triangle}
  we obtain the strong convergence of the states, this is, 
  \begin{equation}\label{eq:strong_states_convergence}
    \|S(\tilde u) - S_h(\bar u_h^\varepsilon)\|_{H^1(\Omega)} \to 0\quad\mbox{for}\quad h\searrow 0.
  \end{equation}
  Next, we show that $\tilde u$ is a local solution of the continuous problem \eqref{eq:target}.
  To this end we exploit \eqref{eq:strong_states_convergence} and the lower semi-continuity of
  the norm map to arrive at
  \begin{equation}\label{eq:conv_target}
    j(\tilde u)
    \le \liminf_{h\searrow 0} j_h(\bar u_h^\varepsilon)
      \le \limsup_{h\searrow 0} j_h(\bar u_h^\varepsilon) 
    \le \limsup_{h\searrow 0} j_h(Q_h \bar u) \le j(\bar u).      
  \end{equation}
  The second to last step follows from the optimality of $\bar u_h^\varepsilon$ for
  \eqref{eq:discrete_local} and the admissibility of the $L^2(\Gamma)$-projection 
  $Q_h \bar u$ for sufficiently small $h>0$.
  The last step follows from the strong convergence of the $L^2(\Gamma)$-projection $Q_h$
  in $L^2(\Gamma)$. Note that this implies
  $\lim_{h\searrow 0} \|S_h(Q_h \bar u) - S(\bar u)\|_{L^2(\Omega)} = 0$.
  Due to Assumption \ref{ass:ssc} the solution $\bar u$ is unique within $B_\varepsilon(\bar u)$
  when $\varepsilon > 0$ is sufficiently small. This implies $\tilde u = \bar u$.
  Note that all ``$\le$'' signs in \eqref{eq:conv_target} then turn to ``$=$'' signs. 

  To conclude the strong convergence of the sequence $\{\bar u_h^\varepsilon\}_{h>0}$ 
  we show additionally the convergence of the norms.
  This follows from \eqref{eq:conv_target} and the strong convergence of the states
  from which we infer
  \begin{align*}
    \frac\alpha2\lim_{h\searrow 0} \|\bar u_h^\varepsilon\|_{L^2(\Gamma)}^2
    &= \lim_{h\searrow 0}\left(j_h(\bar u_h^\varepsilon) - \frac12\|S_h(\bar u_h^\varepsilon) - y_d\|_{L^2(\Omega)}^2\right) \\
    &= j(\bar u) - \frac12 \|S(\bar u) - y_d\|_{L^2(\Omega)}^2 = \frac\alpha2 \|\bar u\|_{L^2(\Gamma)}^2.
  \end{align*}
\end{proof}
The previous Lemma guarantees that each local solution $\bar u\in U_{ad}$
can be approximated by a sequence of local solutions of the discretized problems
\eqref{eq:discrete_local}. Due to $\bar u_h^\varepsilon\in B_\varepsilon(\bar u)$
and $\bar u_h^\varepsilon\rightarrow \bar u$ for $h\searrow 0$ (i.\,e., the constraint $\bar u_h^\varepsilon\in B_\varepsilon(\bar u)$ is never active),
the functions $\bar u_h^\varepsilon$ are local solutions of the
discrete problems \eqref{eq:discrete_target} provided that $h>0$ is small enough.
Hence, we neglect the superscript $\varepsilon$ in the following and denote by
$\bar u_h$ the sequence of discrete local solutions converging to the
local solution $\bar u$.

Next, we show linear convergence of the sequence $\bar u_h$.
\begin{theorem}\label{thm:convergence_full_disc}
  Let $\bar u\in U_{ad}$ be a local solution of \eqref{eq:target}
  which fulfills Assumption~\ref{ass:ssc}, and $\{\bar u_h\}_{h>0}$
  are local solutions of \eqref{eq:discrete_target} with $\bar u_h\to \bar u$ for $h\searrow 0$.
  Then, the error estimate
 \begin{equation*}
   \|\bar u-\bar u_h\|_{L^2(\Gamma)} \le \frac{c}{\sqrt \delta} h
 \end{equation*}
 holds.
\end{theorem}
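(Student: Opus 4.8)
The plan is to exploit the discrete coercivity (Lemma~\ref{lem:discrete_coerc}) together with the two variational inequalities---the continuous one \eqref{eq:opt_cond} at $\bar u$ and the discrete one \eqref{eq:opt_cond_discrete} at $\bar u_h$---to set up a standard quadratic-growth argument. Since the problem is nonconvex, I would work locally: because $\bar u_h\to\bar u$, for $h$ small enough both controls lie in the $\varepsilon$-ball where Lemma~\ref{lem:discrete_coerc} gives $j_h''(u)\,\delta u^2\ge\frac\delta4\|\delta u\|_{L^2(\Gamma)}^2$. The strategy is therefore to estimate $\|\bar u-\bar u_h\|_{L^2(\Gamma)}^2$ from below by a second derivative of $j_h$ evaluated on the error $\bar u-\bar u_h$, and from above by consistency terms that are $O(h)$.

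First I would apply the mean-value form of Taylor's theorem to $j_h$ along the segment from $\bar u_h$ to $\bar u$. Writing $\e_h:=\bar u-\bar u_h$, there is an intermediate point $\xi_h$ (still in $B_\varepsilon(\bar u)$ for small $h$) with
\begin{equation*}
  j_h'(\bar u)\e_h - j_h'(\bar u_h)\e_h = j_h''(\xi_h)(\e_h,\e_h)
  \ge \frac\delta4\,\|\e_h\|_{L^2(\Gamma)}^2,
\end{equation*}
using the discrete coercivity. The discrete optimality condition \eqref{eq:opt_cond_discrete} gives $j_h'(\bar u_h)(w_h-\bar u_h)\ge 0$ for all $w_h\in U_h^{ad}$; testing with the projection $w_h=Q_h\bar u$ controls the term $-j_h'(\bar u_h)\e_h$ up to the best-approximation error $\bar u-Q_h\bar u$. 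For the remaining term $j_h'(\bar u)\e_h$, I would use the continuous variational inequality \eqref{eq:opt_cond} tested with $\bar u_h$ (or $Q_h\bar u$, to keep things admissible) to replace $j'(\bar u)\e_h$ by a nonpositive quantity, and then bound the consistency defect $|(j_h'(\bar u)-j'(\bar u))\e_h|$.

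The consistency defect is the heart of the estimate: from the representations \eqref{eq:deriv_jh} and \eqref{eq:opt_cond_varineq} it equals
\begin{equation*}
  \left|\big(S(\bar u)\,Z(\bar u) - S_h(\bar u)\,Z_h(\bar u),\,\e_h\big)_{L^2(\Gamma)}\right|,
\end{equation*}
which I would split as $S(\bar u)\,Z(\bar u)-S_h(\bar u)\,Z_h(\bar u) = (S(\bar u)-S_h(\bar u))\,Z(\bar u) + S_h(\bar u)\,(Z(\bar u)-Z_h(\bar u))$ and estimate each product in $L^2(\Gamma)$ using the Cauchy--Schwarz inequality on $\Gamma$, the stability bounds of Lemmata~\ref{lem:stability_SZ} and~\ref{lem:stability_ShZh}, and crucially the \emph{sharp} boundary error estimate $\|S(\bar u)-S_h(\bar u)\|_{L^2(\Gamma)}\le c\,h$ from Lemma~\ref{lem:fe_error_suboptimal} (applicable since $\bar y,\bar p\in H^{3/2}(\Omega)$ by Lemma~\ref{lem:regularity_general}). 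This yields a defect of order $h\,\|\e_h\|_{L^2(\Gamma)}$. Collecting everything gives $\frac\delta4\|\e_h\|^2\le c\,h\,\|\e_h\|$ plus best-approximation terms, and dividing by $\|\e_h\|$ produces the claimed rate.

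\textbf{The main obstacle} will be bookkeeping the variational inequalities so that all test directions are admissible for the space in which they are posed---$\bar u$ is not in $U_h^{ad}$ and $\bar u_h$ is not a natural test function for the continuous inequality---and doing so \emph{without} losing the full order $h$ through the projection error of $Q_h\bar u$. This is why the sharp $O(h)$ estimate in $L^2(\Gamma)$ (rather than the suboptimal $H^1$-based rate) is indispensable, and why one must test the continuous and discrete inequalities against carefully chosen feasible functions so that the cross terms combine into the coercive form rather than degrading the rate.
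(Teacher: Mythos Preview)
Your plan is correct and very close to the paper's argument, with two cosmetic swaps: the paper uses the \emph{continuous} coercivity of $j''$ (Corollary~\ref{cor:coervice_neighb}) at an intermediate point and then inserts the consistency defect $(j_h'-j')(\bar u_h)$ evaluated at the \emph{discrete} control, whereas you use the discrete coercivity of $j_h''$ (Lemma~\ref{lem:discrete_coerc}) and evaluate the defect $(j_h'-j')(\bar u)$ at the \emph{continuous} control. Both choices work and lead to the same $O(h)$ rate via Lemma~\ref{lem:fe_error_suboptimal}. Note that $\bar u_h\in U_h^{ad}\subset U_{ad}$, so testing the continuous variational inequality with $\bar u_h$ is admissible without passing through $Q_h\bar u$.

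The one point you should make explicit is the ``best-approximation'' term $-j_h'(\bar u_h)(\bar u-Q_h\bar u)$: a naive Cauchy--Schwarz bound would only give $O(h)$ times a constant (not $\|\e_h\|$), which you cannot absorb. The paper resolves this exactly as you anticipate---orthogonality of $Q_h$ kills the $\alpha\bar u_h$ part (since $\bar u_h\in U_h$), and the remaining product $S_h(\bar u_h)Z_h(\bar u_h)$ is in $H^1(\Gamma)$ by Lemma~\ref{lem:stability_ShZh}, so inserting $Q_h$ once more yields an $O(h^2)$ bound via $\|\bar u\|_{H^1(\Gamma)}$ from Lemma~\ref{lem:regularity_general}. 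With that detail filled in, your argument is complete.
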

\begin{proof}
  Let $\xi = \bar u+t(\bar u_h-\bar u)$ with $t\in(0,1)$.
  From Corollary \ref{cor:coervice_neighb} we obtain for sufficiently small $h$  the estimate
  \begin{align*}
    \frac{\delta}2 \|\bar u-\bar u_h\|_{L^2(\Gamma)}^2
    &\le j''(\xi)(\bar u-\bar u_h)^2\\
    &= j'(\bar u)(\bar u-\bar u_h) - j'(\bar u_h)(\bar u-\bar u_h),      
  \end{align*}
  where the last step follows from the mean value theorem for some $t\in (0,1)$.
  Next, we confirm with the first-order optimality conditions that
  \begin{equation*}
    j'(\bar u)(\bar u-\bar u_h) \le 0 \le j_h'(\bar u_h)(Q_h \bar u-\bar u_h)
  \end{equation*}
  with the $L^2(\Gamma)$ projection $Q_h$ onto $U_h$.
  Note that the property $Q_h\bar u\in U_{ad}$ is trivially satisfied.
  Insertion into the inequality above leads to
  \begin{equation}\label{eq:error_full_disc_start}
    \frac\delta2 \|\bar u-\bar u_h\|_{L^2(\Gamma)}^2 \le
    (j_h'(\bar u_h) - j'(\bar u_h))(Q_h \bar u-\bar u_h) - j'(\bar u_h)(\bar u-Q_h \bar u).
  \end{equation}
  An estimate for the second part follows from orthogonality of the $L^2(\Gamma)$-projection,
  this is,
  \begin{align}\label{eq:error_full_disc_1}
    j'(\bar u_h)(\bar u-Q_h \bar u) &= (\alpha \,\bar u_h + S(\bar u_h)\,Z(\bar u_h), \bar u-Q_h \bar u)_{L^2(\Gamma)} \nonumber\\
                       &= (S(\bar u_h)\,Z(\bar u_h) - Q_h(S(\bar u_h)\,Z(\bar u_h)), \bar u-Q_h \bar u)_{L^2(\Gamma)} \nonumber\\
                       &\le c\,h^2\,\|S(\bar u_h)\,Z(\bar u_h)\|_{H^1(\Gamma)}\,\|\bar u\|_{H^1(\Gamma)}.
  \end{align}
  Furthermore, we exploit the Leibniz rule and the stability properties for $S$ and $Z$ 
  from Lemma \ref{lem:stability_SZ} to obtain
  \begin{align}\label{eq:reg_Suh_Zuh}
    \|S(\bar u_h)\,Z(\bar u_h)\|_{H^1(\Gamma)}
    &\le c\Big(\|S(\bar u_h)\|_{H^1(\Gamma)} \,\|Z(\bar u_h)\|_{L^\infty(\Omega)}\nonumber\\
    &\phantom{\le} + \|S(\bar u_h)\|_{L^\infty(\Omega)}\, \|Z(\bar u_h)\|_{H^1(\Gamma)}\Big)
      \le c.    
  \end{align}
    
  Next, we discuss the first term on the right-hand side of \eqref{eq:error_full_disc_start}.
  Insertion of the definition of $j_h'$ and $j'$ and the stability of $Q_h$ yield
  \begin{align*}
    & (j_h'(\bar u_h) - j'(\bar u_h))(Q_h \bar u-\bar u_h)\\
    &\quad =(S_h(\bar u_h)\,Z_h(\bar u_h) - S(\bar u_h)\,Z(\bar u_h), Q_h(\bar u-\bar u_h))_{L^2(\Gamma)} \\
    &\quad \le c\,\Big(\|S_h(\bar u_h)\|_{L^\infty(\Gamma)}\, \|Z(\bar u_h) -
      Z_h(\bar u_h)\|_{L^2(\Gamma)} \nonumber\\
      &\qquad \phantom{\le} + \|Z(\bar u_h)\|_{L^\infty(\Gamma)}\, \|S(\bar u_h) - S_h(\bar u_h)\|_{L^2(\Gamma)}\Big)\, \|\bar u-\bar u_h\|_{L^2(\Gamma)} \\
    &\quad \le c\,h\left(\|S_h(\bar u_h)\|_{L^\infty(\Gamma)}\,\|Z(\bar
      u_h)\|_{H^{3/2}(\Omega)} + \|Z(\bar u_h)\|_{L^\infty(\Gamma)}\,\|S(\bar
      u_h)\|_{H^{3/2}(\Omega)}\right)\\
    &\qquad \times\|\bar u-\bar u_h\|_{L^2(\Gamma)}.
  \end{align*}
  In the last step we inserted the finite element error estimates from Lemma \ref{lem:fe_error_suboptimal}. Exploiting also the stability estimates from Lemmata \ref{lem:stability_SZ} and
  \ref{lem:stability_ShZh}
   we obtain
  \begin{equation*}
    (j_h'(\bar u_h) - j'(\bar u_h))(Q_h \bar u-\bar u_h)
    \le c\,h\,\|\bar u - \bar u_h\|_{L^2(\Gamma)}.
  \end{equation*}
  Together with \eqref{eq:error_full_disc_start}, \eqref{eq:error_full_disc_1}
  and \eqref{eq:reg_Suh_Zuh} we arrive at the assertion.
\end{proof} 

\subsection{Postprocessing approach}\label{sec:postprocessing}

In this section we consider the so-called postprocessing approach introduced in \cite{MR04}.
The basic idea is to compute an ``improved'' control $\tilde u_h$ by a pointwise evaluation of
the projection formula, i.\,e.,
\begin{equation}\label{eq:def_postprocessing}
  \tilde u_h := \Pi_{\text{ad}}\left(-\frac1\alpha [\bar y_h\,\bar p_h]_\Gamma\right),
\end{equation}
where $\bar y_h$ and $\bar p_h$ is the discrete state and adjoint state,
respectively, obtained by the full discretization approach discussed in
Section~\ref{sec:full_discretization}.
As we require higher regularity of the exact solution in order to observe a
higher convergence rate than for the full discretization approach,
we replace \ref{ass:domain1} by the stronger assumption
\begin{enumerate}[align=left]
\item[\customlabel{ass:domain1b}{\textbf{(A1')}}]
  The domain $\Omega$ is planar and its boundary is globally $C^3$.
\end{enumerate}
The most technical part of convergence proofs for this approach
is the proof of $L^2$-norm estimates for the state variables.
This is usually done by considering the following three terms separately:
\begin{align}\label{eq:postprocessing_basic}
  &\|\bar y-\bar y_h\|_{L^2(\Gamma)} \nonumber\\
  &\quad \le
  c \left(\|\bar y-S_h(\bar u)\|_{L^2(\Gamma)} + \|S_h(\bar u) - S_h(R_h \bar u)\|_{L^2(\Gamma)}
    + \|S_h(R_h \bar u) - \bar y_h\|_{L^2(\Gamma)}\right).
\end{align}
In \cite{MR04} $R_h\colon C(\Gamma)\to U_h$ is chosen as the midpoint interpolant.
We will construct and investigate such an operator in Appendix \ref{app:midpoint}.
Note that a definition of a midpoint interpolant on curved elements is not straight-forward.
The first term on the right-hand side of \eqref{eq:postprocessing_basic} is a finite element error in the $L^2(\Gamma)$-norm. We collect the required estimates in the following Lemma.
\begin{lemma}\label{lem:postprocessing_first}
  For all $q<\infty$ there hold the estimates
  \begin{align*}
    \|\bar y - S_h(\bar u)\|_{L^2(\Gamma)}&\le c\,h^{2-2/q}\,\lnh\, \|\bar y\|_{W^{2,q}(\Omega)} \\
    \|\bar p - Z_h(\bar u)\|_{L^2(\Gamma)}&\le c\,h^{2-2/q}\,\lnh\, \left(\|\bar p\|_{W^{2,q}(\Omega)} +
                                            \|\bar y\|_{H^2(\Omega)}\right).
  \end{align*}
\end{lemma}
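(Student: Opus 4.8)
The key observation is that Lemma~\ref{lem:postprocessing_first} asks for an $L^2(\Gamma)$ error estimate for the finite element approximation of the state and adjoint state, evaluated at the \emph{exact} control $\bar u$ (not the discrete one). Since $\bar y = S(\bar u)$ and $S_h(\bar u)$ solve the same bilinear form with identical coefficient $u = \bar u$, the difference $\bar y - S_h(\bar u)$ is a genuine Galerkin error for the linear(ized) Robin problem; there is no nonlinearity or control perturbation to worry about at this stage. The plan is therefore to reduce both estimates to the maximum-norm finite element result already provided in Theorem~\ref{thm:fe_error_linfty}, using the elementary bound $\|v\|_{L^2(\Gamma)} \le c\,\|v\|_{L^\infty(\Omega)}$ (which follows from a trace theorem, since $\Gamma$ is bounded and $v$ is continuous up to the boundary).

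For the first estimate I would argue as follows. By Lemma~\ref{lem:regularity_improved}, the assumption \ref{ass:domain1b} guarantees $\bar y \in W^{2,q}(\Omega)$ for all $q<\infty$, so Theorem~\ref{thm:fe_error_linfty} is applicable with $y = \bar y$ and $u = \bar u \in H^{1/2}(\Gamma)$, $\bar u \ge 0$. This directly yields
\begin{equation*}
  \|\bar y - S_h(\bar u)\|_{L^2(\Gamma)} \le c\,\|\bar y - S_h(\bar u)\|_{L^\infty(\Omega)}
  \le c\,h^{2-2/q}\,\lnh\,\|\bar y\|_{W^{2,q}(\Omega)},
\end{equation*}
which is exactly the first claimed inequality.

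The second estimate, for the adjoint state, requires one additional step because the right-hand side of the adjoint equation \eqref{eq:adjoint_eq} depends on the state. Here $\bar p = Z(\bar u)$ solves the Robin problem with datum $\bar y - y_d$ in $\Omega$, whereas $Z_h(\bar u)$ is its discrete counterpart; but the discrete adjoint is driven by the discrete state $S_h(\bar u)$, not by $\bar y$. I would therefore split $\bar p - Z_h(\bar u)$ into the pure Galerkin error of the adjoint problem (same right-hand side $\bar y - y_d$), handled again by Theorem~\ref{thm:fe_error_linfty} and Lemma~\ref{lem:regularity_improved} to give the $\|\bar p\|_{W^{2,q}(\Omega)}$ contribution, plus the consistency error arising from replacing $\bar y$ by $S_h(\bar u)$ in the discrete right-hand side. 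The latter is controlled by a stability estimate for the discrete solution operator applied to the source difference $\bar y - S_h(\bar u)$, whose norm is bounded using the first part of the Lemma together with the $H^2(\Omega)$-regularity of $\bar y$ (which accounts for the extra term $\|\bar y\|_{H^2(\Omega)}$ on the right-hand side). The main obstacle is this second, cross-term bookkeeping: one must verify that the stability estimate for the discrete adjoint operator does not degrade the rate $h^{2-2/q}\lnh$, which relies on the fact that the source perturbation $\bar y - S_h(\bar u)$ converges with at least this rate and that the corresponding solution operator is bounded uniformly in $h$, e.g.\ by Lemma~\ref{lem:stability_ShZh}. Everything else is a routine application of the maximum-norm estimate and the trace inequality.
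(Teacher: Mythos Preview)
Your plan is correct and matches the paper's proof almost exactly: both use the trivial bound $\|v\|_{L^2(\Gamma)}\le c\,\|v\|_{L^\infty(\Omega)}$ together with Theorem~\ref{thm:fe_error_linfty} for the first estimate, and for the second estimate both split off the Ritz projection $p^h(\bar u)\in V_h$ of $\bar p$ (same right-hand side $\bar y - y_d$) and then bound the remaining consistency piece $p^h(\bar u)-Z_h(\bar u)$ coming from the perturbed source.

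One small point of imprecision in your outline: for the consistency term you write that the source difference $\bar y - S_h(\bar u)$ is bounded ``using the first part of the Lemma''. That first part is an $L^2(\Gamma)$ estimate, which is not what is needed here since the source difference enters through an $L^2(\Omega)$ inner product. The paper instead tests the difference equation with $v_h = p^h(\bar u)-Z_h(\bar u)$, uses the coercivity of $a_{\bar u}$ (with $\bar u\ge 0$) to obtain an $H^1(\Omega)$ bound, and controls $\|\bar y - S_h(\bar u)\|_{L^2(\Omega)}$ by the standard Aubin--Nitsche argument, which produces exactly the factor $h^2\,\|\bar y\|_{H^2(\Omega)}$ appearing in the statement. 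This is presumably what you had in mind when you invoked the $H^2(\Omega)$-regularity of $\bar y$; just be aware that Lemma~\ref{lem:stability_ShZh} is not quite the right reference for this step.
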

\begin{proof}
  The first estimate follows from the H\"older inequality and the maximum norm estimate derived
  in Theorem \ref{thm:fe_error_linfty}. The second estimate requires an intermediate step.
  We denote by $p^h(\bar u)\in V_h$ the solution of the equation
  \begin{equation*}
    a_{\bar u}(p^h(\bar u),v_h) = (S(\bar u)-y_d,v_h)_{L^2(\Omega)}\qquad \forall v_h\in V_h.
  \end{equation*}
  As $p^h(\bar u)$ is the Ritz-projection of $\bar p$ we can apply Theorem \ref{thm:fe_error_linfty}
  again and obtain
  \begin{equation*}
    \|\bar p - p^h(\bar u)\|_{L^2(\Gamma)} \le c\,h^{2-2/q}\,\lnh\,\|\bar p\|_{W^{2,q}(\Gamma)}.
  \end{equation*}
  To show an estimate for the error between $p^h(\bar u)$ and $Z_h(\bar u)$ we
  test the equations defining both functions by $v_h = p^h(\bar u) - Z_h(\bar
  u)$, compare the proof of Lemma \ref{lem:lipschitz_general}.
  Together with the non-negativity of $\bar u$ we obtain
  \begin{align*}
    &\|p^h(\bar u)-Z_h(\bar u)\|_{H^1(\Omega)}^2\\
    &\quad = -\int_\Gamma \bar u\,(p^h(\bar u) - Z_h(\bar u))^2 
      + (S(\bar u) - S_h(\bar u),p^h(\bar u)-Z_h(\bar u))_{L^2(\Omega)} \\
    &\quad\le c\,h^2\,\|y\|_{H^2(\Omega)}\,\|p^h(\bar u)-Z_h(\bar u)\|_{H^1(\Omega)}.
  \end{align*}
  The last step follows from the estimate $\|S(\bar u) - S_h(\bar u)\|_{L^2(\Omega)} \le c\,h^2\,\|S(\bar u)\|_{H^2(\Omega)}$ which is a consequence of the Aubin-Nitsche trick.
  With the triangle inequality we conclude the desired estimate for the discrete
  control--to--adjoint operator  .
\end{proof}

To obtain an optimal error estimate for the second term we need an additional assumption
which is used in all contributions studying the postprocessing approach.
To this end, 
define the subsets $\mathcal K_2:=\cup\{\bar E\colon E\in\mathcal E_h,\ E\subset\mathcal A,\ \mbox{or}\ E\subset\mathcal I\}$ and $\mathcal K_1:=\Gamma\setminus \mathcal K_2$.
In the following we will assume that $\mathcal K_1$ satisfies 
\begin{equation}\label{eq:assumption}
  |\mathcal K_1|\le c\,h.
\end{equation}
The idea of this assumption is, that the control can only switch between active and inactive set on
$\mathcal K_1$. Only due to these switching points the regularity of the control is reduced,
see also Lemma \ref{lem:regularity_improved}.
One can in general expect that this happens at finitely many points
and thus, the assumption \eqref{eq:assumption} is not very restrictive.

As an intermediate result required to prove estimates for $Z_h(\bar u)-Z_h(R_h\bar u)$ in
$L^2(\Gamma)$, we need an estimate for $S_h(\bar u) - S_h(R_h\bar u)$ in $L^2(\Omega)$.
\begin{lemma}\label{lem:postprocessing_second_aux}
  For all $q<\infty$ there holds the estimate
  \begin{equation*}
    \|S_h(\bar u) - S_h(R_h \bar u)\|_{L^2(\Omega)} \le
    c\,h^{2-2/q}\left(1 + \|\bar u\|_{W^{1,q}(\Gamma)} + \|\bar u\|_{H^{2-1/q}(\mathcal K_2)}\right).
  \end{equation*}
\end{lemma}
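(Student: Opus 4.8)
The plan is to characterize $w_h:=S_h(\bar u)-S_h(R_h\bar u)\in V_h$ as a discrete solution and then to use a duality argument to pass to the $L^2(\Omega)$-norm. Subtracting the equations defining $S_h(\bar u)$ and $\tilde y_h:=S_h(R_h\bar u)$ and regrouping the boundary terms by means of $\bar u\,S_h(\bar u)-R_h\bar u\,\tilde y_h=\bar u\,w_h+(\bar u-R_h\bar u)\,\tilde y_h$, one finds
\[
  a_{\bar u}(w_h,v_h)=-\big((\bar u-R_h\bar u)\,\tilde y_h,v_h\big)_{L^2(\Gamma)}\qquad\forall v_h\in V_h.
\]
Since $\bar u\ge u_a>0$ implies $R_h\bar u\ge u_a>0$, the assumption \eqref{eq:ass_coercivity} holds for $R_h\bar u$ and Lemma \ref{lem:stability_ShZh} applies; together with $\bar u\in L^\infty(\Gamma)$ from Lemma \ref{lem:regularity_general} this bounds $\tilde y_h$ in $H^1(\Gamma)\cap L^\infty(\Omega)$ uniformly in $h$.

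Next I would introduce the adjoint state $\phi\in H^1(\Omega)$ defined by $a_{\bar u}(\phi,v)=(w_h,v)_{L^2(\Omega)}$ for all $v\in H^1(\Omega)$, together with its Galerkin approximation $\phi_h\in V_h$ satisfying $a_{\bar u}(\phi_h,v_h)=(w_h,v_h)_{L^2(\Omega)}$ for all $v_h\in V_h$. Exploiting the symmetry of $a_{\bar u}$, testing the relation defining $\phi_h$ with $v_h=w_h$ and the equation for $w_h$ with $v_h=\phi_h$ yields the identity
\[
  \|w_h\|_{L^2(\Omega)}^2=a_{\bar u}(w_h,\phi_h)=-\big((\bar u-R_h\bar u)\,\tilde y_h,\phi_h\big)_{L^2(\Gamma)},
\]
so the whole estimate reduces to bounding the boundary functional with test function $\psi_h:=(\tilde y_h\,\phi_h)|_\Gamma$. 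The stability arguments of Lemma \ref{lem:stability_ShZh}, applied to the discrete problem defining $\phi_h$ (whose right-hand side scales with $w_h$), give $\|\phi_h\|_{H^1(\Gamma)}+\|\phi_h\|_{L^2(\Gamma)}\le c\,\|w_h\|_{L^2(\Omega)}$ and $\|\phi_h\|_{L^\infty(\Gamma)}\le c\,\lnh\,\|w_h\|_{L^2(\Omega)}$. Crucially, estimating the product by Cauchy--Schwarz on the gradient term rather than through an $L^\infty$-bound yields the \emph{logarithm-free} estimate $\|\psi_h\|_{W^{1,1}(\Gamma)}\le c\,\|w_h\|_{L^2(\Omega)}$, while $\|\psi_h\|_{L^\infty(\Gamma)}\le c\,\lnh\,\|w_h\|_{L^2(\Omega)}$.

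The core of the argument is a superconvergence estimate for the midpoint interpolant, and here the decomposition $\Gamma=\mathcal K_1\cup\mathcal K_2$ is essential. On the union $\mathcal K_2$ of elements lying entirely in the active or inactive set the control is smooth, $\bar u\in H^{2-1/q}(\mathcal K_2)$ by Lemma \ref{lem:regularity_improved}, so the local midpoint-interpolation estimates of Appendix \ref{app:midpoint}, which are tailored precisely to this regularity, apply and yield after a Hölder summation over the boundary elements
\[
  \left|\int_{\mathcal K_2}(\bar u-R_h\bar u)\,\psi_h\right|\le c\,h^{2-2/q}\,\big(1+\|\bar u\|_{H^{2-1/q}(\mathcal K_2)}\big)\,\|\psi_h\|_{W^{1,1}(\Gamma)}.
\]
On the remaining part $\mathcal K_1$, where $\bar u$ is only $W^{1,q}(\Gamma)$-regular, I would use the elementwise oscillation bound $\|\bar u-R_h\bar u\|_{L^\infty(E)}\le c\,h^{1-1/q}\|\bar u\|_{W^{1,q}(E)}$ together with $|\mathcal K_1|\le c\,h$ from \eqref{eq:assumption} and the $L^\infty(\Gamma)$-bound for $\psi_h$, obtaining
\[
  \left|\int_{\mathcal K_1}(\bar u-R_h\bar u)\,\psi_h\right|\le c\,h^{1-1/q}\,\|\bar u\|_{W^{1,q}(\Gamma)}\,|\mathcal K_1|\,\|\psi_h\|_{L^\infty(\Gamma)}\le c\,h^{2-1/q}\,\lnh\,\|\bar u\|_{W^{1,q}(\Gamma)}\,\|w_h\|_{L^2(\Omega)}.
\]
Because $2-1/q>2-2/q$, the factor $h^{1/q}\lnh$ tends to zero, so this contribution is of strictly higher order and is absorbed into $c\,h^{2-2/q}\,\|\bar u\|_{W^{1,q}(\Gamma)}\,\|w_h\|_{L^2(\Omega)}$; note that the logarithm causes no harm \emph{only} on $\mathcal K_1$.

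Adding the two contributions, inserting $\|\psi_h\|_{W^{1,1}(\Gamma)}\le c\,\|w_h\|_{L^2(\Omega)}$, substituting into the duality identity and dividing by $\|w_h\|_{L^2(\Omega)}$ would give the claimed bound. I expect the main obstacle to be the superconvergence estimate on $\mathcal K_2$: the midpoint interpolant must be controlled on curved boundary elements, so the commutator terms produced by the non-affine element maps (governed by \eqref{eq:props_trafo}) have to be shown to be of the same order $h^{2-2/q}$, and this is exactly why the corresponding local estimates are deferred to Appendix \ref{app:midpoint}; the "$1+$" in the bound originates from these curvature contributions, which do not carry a factor of $\|\bar u\|_{H^{2-1/q}(\mathcal K_2)}$. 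A secondary technical point is the verification of the stability bounds for $\phi_h$ — in particular that the $W^{1,1}(\Gamma)$-bound of $\psi_h$ is logarithm-free, which is what keeps the final estimate free of the factor $\lnh$ — but this follows from the arguments already used in the proof of Lemma \ref{lem:stability_ShZh}.
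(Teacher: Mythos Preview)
Your approach is essentially the same as the paper's: both reduce the estimate, via a duality argument, to the boundary functional $\int_\Gamma(\bar u-R_h\bar u)\cdot(\text{state}\times\text{dual})$, then split along $\mathcal K_1\cup\mathcal K_2$ and invoke the midpoint-interpolant estimates of Appendix~\ref{app:midpoint}. The one structural difference is your choice of test function: you use the \emph{discrete} dual $\phi_h$ and obtain the clean identity $\|w_h\|_{L^2(\Omega)}^2=-((\bar u-R_h\bar u)\tilde y_h,\phi_h)_{L^2(\Gamma)}$, whereas the paper inserts the \emph{continuous} dual $w$ and splits $w=(w-\Pi_hw)+\Pi_hw$, paying for the first piece with the Lipschitz bound $\|e_h\|_{H^1(\Omega)}\le c\,\|\bar u-R_h\bar u\|_{L^2(\Gamma)}$. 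Your route is slightly tidier, at the price of needing the discrete $H^1(\Gamma)$-stability of $\phi_h$ (which follows exactly as in Lemma~\ref{lem:stability_ShZh}).

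Two remarks on details. First, your worry about a logarithm in $\|\phi_h\|_{L^\infty(\Gamma)}$ is unnecessary: since $n=2$, the boundary is one-dimensional and $H^1(\Gamma)\hookrightarrow L^\infty(\Gamma)$, so $\|\phi_h\|_{L^\infty(\Gamma)}\le c\,\|\phi_h\|_{H^1(\Gamma)}\le c\,\|w_h\|_{L^2(\Omega)}$ without any $\lnh$. The paper exploits exactly this embedding (see the step after \eqref{eq:superapprox_omega_second_1}), which is why the detour through $W^{1,1}(\Gamma)$ is not needed; the natural estimate of Lemma~\ref{lem:int_error_Rh} involves $\|\psi_h\|_{H^1(\Gamma)}$ and $\|\psi_h\|_{L^\infty(\Gamma)}$, and both are logarithm-free. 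Second, the ``$1+$'' in the statement does not come from curvature commutators but simply from absorbing lower-order norms of $\bar u$ (bounded by the control constraints) into the constant; the paper in fact proves the slightly stronger rate $h^{2-1/q}$.
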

\begin{proof}
  To shorten the notation we write $e_h:= S_h(\bar u) - S_h(R_h \bar u)$.
  Moreover, we introduce the function $w\in H^1(\Omega)$ solving the equation
  \begin{equation}\label{eq:dual_problem}
    a_{\bar u}(v,w) = (e_h,v)_{L^2(\Omega)}\qquad\forall v\in H^1(\Omega).
  \end{equation}
  This implies
  \begin{equation}\label{eq:superapprox_omega_begin}
    \|e_h\|_{L^2(\Omega)}^2 = a_{\bar u}(e_h,w-\Pi_h w) + a_{\bar u}(e_h,\Pi_h w).
  \end{equation}
  Next, we discuss both terms on the right-hand side separately.
  The first one is treated with the Cauchy-Schwarz inequality and
  the interpolation error estimate \eqref{eq:int_error}.
  These arguments lead to
  \begin{equation*}
    a_{\bar u}(e_h,w-\Pi_h w) \le c\,h\,\|e_h\|_{H^1(\Omega)}\,\|e_h\|_{L^2(\Omega)}.
  \end{equation*}
  The $H^1(\Omega)$-norm of $e_h$ is further estimated by the Lipschitz property from
  Lemma~\ref{lem:Sh_lipschitz} and an interpolation error estimate for the midpoint interpolant. This yields
  \begin{equation*}
    \|e_h\|_{H^1(\Omega)} \le c\,\|\bar u - R_h\bar u\|_{L^2(\Gamma)} \le c\,h\,\|\bar u\|_{W^{1,q}(\Gamma)}
  \end{equation*}
  for all $q\ge 2$.
  Insertion into the estimate above taking into account the stability estimates
  from Lemma \ref{lem:stability_ShZh} yields
  \begin{equation}\label{eq:superapprox_omega_first}
    a_{\bar u}(e_h,w-\Pi_h w)  \le c h^{2} \|\bar u\|_{W^{1,q}(\Gamma)}  \|e_h\|_{L^2(\Omega)}.
  \end{equation}
  
  Next, we consider the second term on the right-hand side of \eqref{eq:superapprox_omega_begin}.
  After a reformulation by means of the definition of $S_h$ we get
  \begin{align}\label{eq:superapprox_omega_second_0}
    a_{\bar u}(e_h,\Pi_h w)
    &= a_{\bar u} (S_h(\bar u),\Pi_h w) - a_{\bar u}(S_h(R_h\bar u), \Pi_h w) \nonumber\\
    &= a_{R_h\bar u} (S_h(R_h \bar u),\Pi_h w) - a_{\bar u}(S_h(R_h\bar u), \Pi_h w)\nonumber\\
    &= ((R_h\bar u - \bar u)\,S_h(R_h \bar u),\Pi_h w)_{L^2(\Gamma)}.
  \end{align}  
  We can further estimate this term with the interpolation error estimate
  from Lemma \ref{lem:int_error_Rh}
  \begin{align}\label{eq:superapprox_omega_second_1}
    &a_{\bar u}(e_h,\Pi_h w)
    \le c\,h^2\,\|\bar u\|_{H^1(\Gamma)}\,\|S_h(R_h\bar u)\,\Pi_hw\|_{H^1(\Gamma)} \nonumber\\
    &\qquad + \|S_h(R_h\bar u)\|_{L^\infty(\Gamma)}\,\|\Pi_h
      w\|_{L^\infty(\Gamma)}\,\sum_{E\in\mathcal E_h} |\int_E(\bar u-R_h\bar
      u)| \nonumber\\
    &\le c\,\Big(h^2 + \sum_{E\in\mathcal E_h} |\int_E(\bar u-R_h\bar
      u)|\Big)\left(1+\|\bar u\|_{H^1(\Gamma)}\right)
      \|S_h(R_h\bar u)\|_{H^1(\Gamma)}\, \|\Pi_h w\|_{H^1(\Gamma)}.
  \end{align}
  The last step follows from the embedding $H^1(\Gamma)\hookrightarrow
  L^\infty(\Gamma)$ and the multiplication rule
$\|u\,v\|_{H^1(\Gamma)} \le c\,\|u\|_{H^1(\Omega)}\,\|v\|_{H^1(\Gamma)}$,
see \cite[Theorem 1.4.4.2]{Gri85}. Both properties are only fulfilled in case of $n=2$.

  Let us discuss the terms on the right-hand side separately.
  For elements $E\subset\mathcal K_1$ we can exploit the assumption~\eqref{eq:assumption}
  which provides the estimate $\sum_{E\subset\mathcal K_1} |E| \le c\,h$ and the second
  interpolation error
  estimate from Lemma~\ref{lem:local_est_midpoint} to arrive at
  \begin{equation*}
    \sum_{\genfrac{}{}{0pt}{}{E\in\mathcal E_h}{E\subset\mathcal K_1}} |\int_E(\bar u-R_h\bar u)|
    \le c\,h\,\|\bar u-R_h\bar u\|_{L^\infty(\Gamma)}
    \le c\,h^{2-1/q}\,\|\nabla \bar u\|_{L^q(\Gamma)}.
  \end{equation*}
  Moreover, with the first estimate from Lemma~\ref{lem:local_est_midpoint} and the discrete
  Cauchy-Schwarz inequality we obtain for elements
  in $\mathcal K_2$ the estimate
  \begin{align*}
    \sum_{\genfrac{}{}{0pt}{}{E\in\mathcal E_h}{E\subset\mathcal K_2}} |\int_E(\bar u-R_h\bar u)|
    &\le c\,h^{5/2-1/q}\,\|\bar u\|_{H^{2-1/q}(\mathcal K_2)}\,
      \Big(\sum_{\genfrac{}{}{0pt}{}{E\in\mathcal E_h}{E\subset\mathcal K_2}}1\Big)^{1/2} \\
    &\le c\,h^{2-1/q}\,\|\bar u\|_{H^{2-1/q}(\mathcal K_2)}.
  \end{align*}
  The remaining terms on the right-hand side of
  \eqref{eq:superapprox_omega_second_1}
  can be treated with stability estimates for $S_h$ (see
  Lemma~\ref{lem:stability_ShZh}) and $R_h$,
  the estimate $\|\Pi_h w\|_{H^1(\Gamma)}\le c\,\|w\|_{H^1(\Gamma)}$
  stated in \eqref{eq:int_error_boundary} and the a~priori estimate
  $\|w\|_{H^1(\Gamma)} \le c\,\|e_h\|_{L^2(\Omega)}$ from Lemma \ref{lem:props_S}a).
  Insertion of the previous estimates into \eqref{eq:superapprox_omega_second_1}
  yields
  \begin{equation}\label{eq:superapprox_omega_second}
    a_{\bar u}(e_h,\Pi_h w) \le c\,h^{2-1/q}\left(1+\|\bar u\|_{W^{1,q}(\Gamma)} + \|\bar u\|_{H^{2-1/q}(\mathcal K_2)}\right)\|e_h\|_{L^2(\Omega)}.
  \end{equation}
  Note that we hide the of lower-order norms of $\bar u$ in the generic constant
  as these quantities may be estimated by means of the control bounds $u_a$ and $u_b$.
  Insertion of \eqref{eq:superapprox_omega_first} and \eqref{eq:superapprox_omega_second}
  into \eqref{eq:superapprox_omega_begin} and dividing by $\|e_h\|_{L^2(\Omega)}$
  implies the assertion.
\end{proof}

\begin{lemma}\label{lem:postprocessing_second}
    Under the assumption \eqref{eq:assumption} the estimates 
    \begin{align*}
      \|S_h(\bar u) - S_h(R_h \bar u)\|_{L^2(\Gamma)} &\le c\,h^{2-1/q}\, \left(1 + \|\bar u\|_{H^{2-1/q}(\mathcal K_2)} + \|\bar u\|_{W^{1,q}(\Gamma)}\right), \\
      \|Z_h(\bar u) - Z_h(R_h \bar u)\|_{L^2(\Gamma)} &\le c\,h^{2-1/q}\, \left(1 + \|\bar u\|_{H^{2-1/q}(\mathcal K_2)} + \|\bar u\|_{W^{1,q}(\Gamma)}\right)
    \end{align*}
    are valid for arbitrary $q\in[2,\infty)$.
\end{lemma}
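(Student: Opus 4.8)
The plan is to establish both bounds by an Aubin--Nitsche duality argument on the boundary, reusing the volume estimate of Lemma~\ref{lem:postprocessing_second_aux}. I begin with the state difference $e_h:=S_h(\bar u)-S_h(R_h\bar u)\in V_h$. Subtracting the two discrete state equations and using $a_{R_h\bar u}(\cdot,\cdot)=a_{\bar u}(\cdot,\cdot)+((R_h\bar u-\bar u)\,\cdot\,,\cdot)_{L^2(\Gamma)}$ shows, exactly as in \eqref{eq:superapprox_omega_second_0}, that
\begin{equation*}
  a_{\bar u}(e_h,v_h)=((R_h\bar u-\bar u)\,S_h(R_h\bar u),v_h)_{L^2(\Gamma)}\qquad\forall v_h\in V_h.
\end{equation*}
I then introduce the boundary dual problem $a_{\bar u}(v,w)=(e_h,v)_{L^2(\Gamma)}$ for all $v\in H^1(\Omega)$, whose solution satisfies $w\in H^{3/2}(\Omega)$ and $\|w\|_{H^{3/2}(\Omega)}+\|w\|_{H^1(\Gamma)}\le c\,\|e_h\|_{L^2(\Gamma)}$ by Lemma~\ref{lem:props_S}a) (with $f\equiv0$, $g=e_h$). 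Testing by $v=e_h$ and splitting $w=(w-\Pi_h w)+\Pi_h w$ yields
\begin{equation*}
  \|e_h\|_{L^2(\Gamma)}^2=a_{\bar u}(e_h,w-\Pi_h w)+((R_h\bar u-\bar u)\,S_h(R_h\bar u),\Pi_h w)_{L^2(\Gamma)}.
\end{equation*}

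The second term is handled verbatim as in the proof of Lemma~\ref{lem:postprocessing_second_aux}: splitting $\Gamma$ into $\mathcal K_1$ and $\mathcal K_2$, using the local midpoint estimates of Lemmata~\ref{lem:local_est_midpoint} and \ref{lem:int_error_Rh} together with the assumption \eqref{eq:assumption} and the $H^1(\Gamma)$--stability of $S_h$ from Lemma~\ref{lem:stability_ShZh}, and finally $\|\Pi_h w\|_{H^1(\Gamma)}\le c\,\|w\|_{H^1(\Gamma)}\le c\,\|e_h\|_{L^2(\Gamma)}$ from \eqref{eq:int_error_boundary}. This produces the contribution $c\,h^{2-1/q}(1+\|\bar u\|_{W^{1,q}(\Gamma)}+\|\bar u\|_{H^{2-1/q}(\mathcal K_2)})\,\|e_h\|_{L^2(\Gamma)}$, which is of the desired order.

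The main obstacle is the first term. By the boundedness of $a_{\bar u}$ (Lemma~\ref{lem:lax_milgram}) and the interpolation estimate \eqref{eq:int_error} one has $a_{\bar u}(e_h,w-\Pi_h w)\le c\,\|e_h\|_{H^1(\Omega)}\,\|w-\Pi_h w\|_{H^1(\Omega)}\le c\,h^{1/2}\,\|e_h\|_{H^1(\Omega)}\,\|e_h\|_{L^2(\Gamma)}$, where only $h^{1/2}$ is gained because the dual solution is merely $H^{3/2}(\Omega)$--regular. A crude bound $\|e_h\|_{H^1(\Omega)}\le c\,h\,\|\bar u\|_{W^{1,q}(\Gamma)}$ via Lemma~\ref{lem:Sh_lipschitz} would only give the suboptimal order $h^{3/2}$. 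To reach $h^{2-1/q}$ I instead measure the interpolation error in a negative boundary norm, reusing the computation from the proof of Theorem~\ref{thm:convergence_fully_discrete}: since $R_h\bar u\ge u_a>0$, testing the difference of the discrete state equations by $e_h$ and discarding the nonnegative zero-order boundary term gives $\|e_h\|_{H^1(\Omega)}\le c\,\|\bar u-R_h\bar u\|_{H^{-s}(\Gamma)}\,\|S_h(\bar u)\|_{H^1(\Gamma)}$ for $s\in(0,1/2)$. Choosing $s=1/2-1/q$ and invoking the negative-norm superconvergence of the midpoint interpolant --- a consequence of the element-wise cancellation of $\int_E(\bar u-R_h\bar u)$ quantified in Lemma~\ref{lem:local_est_midpoint} --- namely $\|\bar u-R_h\bar u\|_{H^{-s}(\Gamma)}\le c\,h^{3/2-1/q}(1+\|\bar u\|_{W^{1,q}(\Gamma)}+\|\bar u\|_{H^{2-1/q}(\mathcal K_2)})$, the first term becomes of order $h^{2-1/q}$. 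Dividing by $\|e_h\|_{L^2(\Gamma)}$ completes the proof for $S_h$.

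Finally, the estimate for $Z_h$ follows the same scheme applied to $q_h:=Z_h(\bar u)-Z_h(R_h\bar u)$, which by symmetry of $a_{\bar u}$ and the additional change of the right-hand side of the discrete adjoint equation satisfies
\begin{equation*}
  a_{\bar u}(v_h,q_h)=(e_h,v_h)_{L^2(\Omega)}+((R_h\bar u-\bar u)\,Z_h(R_h\bar u),v_h)_{L^2(\Gamma)}\qquad\forall v_h\in V_h.
\end{equation*}
The boundary term is treated exactly as above, with $S_h$ replaced by $Z_h$ and its $H^1(\Gamma)$--stability taken from Lemma~\ref{lem:stability_ShZh}. The new volume term contributes, after duality and interpolation, at most $c\,\|e_h\|_{L^2(\Omega)}\,\|q_h\|_{L^2(\Gamma)}$, and inserting the $L^2(\Omega)$--estimate of Lemma~\ref{lem:postprocessing_second_aux} shows that this is again of the required order. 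I expect the delicate point throughout to be the sharp negative-norm control of the midpoint interpolation error in the first duality term, as all remaining steps are routine adaptations of the volume analysis.
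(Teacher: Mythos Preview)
Your overall architecture---boundary duality, splitting $w=(w-\Pi_hw)+\Pi_hw$, handling the $\Pi_hw$ piece via Lemma~\ref{lem:int_error_Rh} and the $\mathcal K_1/\mathcal K_2$ decomposition, and feeding Lemma~\ref{lem:postprocessing_second_aux} into the additional volume term for $Z_h$---coincides with the paper's proof. The one genuinely different step is how you upgrade the bound on $\|e_h\|_{H^1(\Omega)}$ from $O(h)$ to $O(h^{3/2-1/q})$, which is what the coarse duality term $a_{\bar u}(e_h,w-\Pi_hw)\le c\,h^{1/2}\|e_h\|_{H^1(\Omega)}\|e_h\|_{L^2(\Gamma)}$ demands.

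You invoke a negative-norm superconvergence estimate $\|\bar u-R_h\bar u\|_{H^{-s}(\Gamma)}\le c\,h^{3/2-1/q}(\ldots)$ with $s=1/2-1/q$. This is plausible, but it is neither proved in the paper nor an immediate consequence of Lemma~\ref{lem:local_est_midpoint}: for $s<1/2$ the test functions $\varphi\in H^s(\Gamma)$ are not in $L^\infty(\Gamma)$, so Lemma~\ref{lem:int_error_Rh} does not apply directly, and a separate argument (splitting via $Q_h$, using $\|\varphi-Q_h\varphi\|_{L^2(\Gamma)}\le c\,h^s\|\varphi\|_{H^s(\Gamma)}$, and treating $\mathcal K_1$, $\mathcal K_2$ element by element) would still have to be carried out. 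The reference to Theorem~\ref{thm:convergence_fully_discrete} is not enough either, since there the $H^{-s}$ norm is used only qualitatively.

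The paper avoids this detour entirely by exploiting that $e_h\in V_h$ is discrete. It tests the difference equation with $e_h$ itself and applies the same midpoint cancellation estimate \eqref{eq:superapprox_decomp} with $\Pi_hw$ replaced by $e_h$, obtaining
\[
\|e_h\|_{H^1(\Omega)}^2\le c\,h^{2-1/q}(\ldots)\bigl(\|e_h\|_{H^1(\Gamma)}+\|e_h\|_{H^1(\Omega)}\bigr),
\]
and then removes the $H^1(\Gamma)$ norm via the inverse inequality $\|e_h\|_{H^1(\Gamma)}\le c\,h^{-1/2}\|e_h\|_{H^{1/2}(\Gamma)}\le c\,h^{-1/2}\|e_h\|_{H^1(\Omega)}$. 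This yields $\|e_h\|_{H^1(\Omega)}\le c\,h^{3/2-1/q}(\ldots)$ directly, without any negative-norm machinery. The same mechanism is reused verbatim for $\|q_h\|_{H^1(\Omega)}$ in the $Z_h$ case (with the extra volume term $(e_h,q_h)_{L^2(\Omega)}$ absorbed via Lemma~\ref{lem:postprocessing_second_aux}), a point your sketch leaves implicit when you say ``follows the same scheme''.
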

\begin{proof}
  We will only prove the second estimate as the first one follows from the same technique
  and is even easier as the right-hand sides of the equations defining $S_h(\bar u)$ and $S_h(R_h\bar u)$ coincide. This is not the case for the control--to--adjoint operator.
  
  To shorten the notation we write $e_h:=Z_h(\bar u) - Z_h(R_h \bar u)$.
  As in the previous Lemma we rewrite the error by a duality argument using a dual problem
  similar to \eqref{eq:dual_problem} with solution $w\in H^1(\Omega)$, more precisely,
  \begin{equation*}
    a_{\bar u}(v,w) = (e_h,v)_{L^2(\Gamma)}\qquad \forall v\in H^1(\Omega).
  \end{equation*}
  This yields
  \begin{equation}\label{eq:superapprox_begin}
    \|e_h\|_{L^2(\Gamma)}^2 = a_{\bar u}(e_h,w-\Pi_h w) + a_{\bar u}(e_h, \Pi_h w).
  \end{equation}
  We rewrite the second expression in \eqref{eq:superapprox_begin} and get
  analogous to \eqref{eq:superapprox_omega_second_0}
  \begin{align}\label{eq:superapprox_first}
    &a_{\bar u}(e_h,\Pi_h w)\nonumber\\
    &\quad= a_{\bar u}(Z_h(\bar u), \Pi_h w) \pm a_{R_h\bar u}(Z_h(R_h \bar u), \Pi_h w) 
      - a_{\bar u}(Z_h(R_h \bar u), \Pi_h w)\nonumber\\
    &\quad = (S_h(\bar u) - S_h(R_h\bar u), \Pi_h w)_{L^2(\Omega)} 
      + ((R_h \bar u - \bar u)\, Z_h(R_h \bar u), \Pi_h w)_{L^2(\Gamma)}.
  \end{align}
  Note that the first term would not appear when deriving estimates for $S_h$ instead of $Z_h$ as 
  the equations defining $S_h(\bar u)$ and $S_h(R_h \bar u)$ have the same right-hand side.

  The first term can be treated with the Cauchy-Schwarz inequality,
  Lemma~\ref{lem:postprocessing_second_aux} and the estimate
  $\|\Pi_h w\|_{L^2(\Omega)} \le c\,\|w\|_{H^1(\Omega)}\le
  c\,\|e_h\|_{L^2(\Gamma)}$ which can be deduced from
  \eqref{eq:int_error} and Lemma~\ref{lem:lax_milgram} with $g=e_h$.
  These ideas lead to
  \begin{align}\label{eq:superapprox_Zhw1}
    &(S_h(\bar u) - S_h(R_h\bar u), \Pi_h w)_{L^2(\Omega)} \nonumber\\
    &\qquad \le c\,h^{2-1/q}\, \left(1 + \|\bar u\|_{W^{1,q}(\Gamma)} + \|\bar u\|_{H^{2-1/q}(\mathcal K_2)}\right)\,\|e_h\|_{L^2(\Gamma)}.
  \end{align}
  For the second term on the right-hand side of \eqref{eq:superapprox_first}
  we apply the same steps as for \eqref{eq:superapprox_omega_second}
  with the only modification that the a priori estimate
  $\|w\|_{H^1(\Gamma)} \le c \|e_h\|_{L^2(\Gamma)}$ from
  Lemma~\ref{lem:props_S}a) has to be employed. From this we infer
   \begin{align}\label{eq:superapprox_decomp}
     & ((R_h \bar u - \bar u)\,Z_h(R_h \bar u), \Pi_h w)_{L^2(\Gamma)}
       \nonumber\\
     &\quad \le c\, h^{2-1/q}\left(1+\|\bar u\|_{W^{1,q}(\Gamma)}
       + \|\bar u\|_{H^{2-1/q}(\mathcal K_2)}\right)
       \|Z_h(R_h \bar u)\|_{H^1(\Gamma)}\,\|\Pi_h w\|_{H^1(\Gamma)}\nonumber\\
     &\quad \le c\, h^{2-1/q}\left(1+\|\bar u\|_{W^{1,q}(\Gamma)}
       + \|\bar u\|_{H^{2-1/q}(\mathcal K_2)}\right)
       \|e_h\|_{L^2(\Gamma)}.
   \end{align}
   In the last step we used the boundedness of $Z_h(R_h \bar u)$, see Lemma \ref{lem:stability_ShZh}.
   Insertion of \eqref{eq:superapprox_Zhw1} and \eqref{eq:superapprox_decomp}
   into \eqref{eq:superapprox_first} 
   leads to
   \begin{equation}\label{eq:superapprox_Zhw}
     a_{\bar u}(e_h,\Pi_h w) \le c\,h^{2-1/q}\left(1 + \|\bar u\|_{W^{1,q}(\Gamma)}+ \|\bar u\|_{H^{2-1/q}(\mathcal K_2)}\right) \|e_h\|_{L^2(\Gamma)}.
   \end{equation}
   It remains to discuss the first term on the right-hand side of \eqref{eq:superapprox_begin}.
   We obtain with the boundedness of $a_{\bar u}$, the interpolation error estimate
   \eqref{eq:int_error} and Lemma~\ref{lem:props_S}a)
   \begin{equation}\label{eq:superapprox_third}
     a_{\bar u}(e_h,w-\Pi_h w) \le c\,h^{1/2}\,\|e_h\|_{H^1(\Omega)}\, \|w\|_{H^{3/2}(\Omega)}
     \le\,c\,h^{1/2}\,\|e_h\|_{H^1(\Omega)}\,\|e_h\|_{L^2(\Gamma)}.
   \end{equation}
   An estimate for the expression $\|e_h\|_{H^1(\Omega)}$ follows from the equality
   \begin{equation*}
     \|e_h\|_{H^1(\Omega)}^2 + (\bar u\,Z_h(\bar u) - R_h \bar u\, Z_h(R_h \bar u), e_h)_{L^2(\Gamma)} = (S_h(\bar u) - S_h(R_h\bar u), e_h)_{L^2(\Omega)}
   \end{equation*}
   which can be deduced by subtracting the equations for $Z_h(\bar u)$ and
   $Z_h(R_h \bar u)$ from each other.
   Rearranging the terms yields
   \begin{align*}
     \|e_h\|_{H^1(\Omega)}^2 &\le ((R_h \bar u - \bar u)\,Z_h(R_h \bar u),
                               e_h)_{L^2(\Gamma)} \\
     &- (\bar u \,e_h,e_h)_{L^2(\Gamma)} + \|S_h(\bar u) - S_h(R_h\bar u)\|_{L^2(\Omega)} \|e_h\|_{L^2(\Omega)}.
   \end{align*}
   The second term on the right-hand side can be bounded by zero as $\bar u\ge 0$.
   An estimate for the last term is proved in Lemma \ref{lem:postprocessing_second_aux}.
   For the first term we apply the estimate \eqref{eq:superapprox_decomp} with
   $\Pi_h w$ replaced by $e_h$. All together, we obtain
   \begin{equation}\label{eq:error_eh_H1}
     \|e_h\|_{H^1(\Omega)}^2 \le c\, h^{2-1/q}\left(1 + \|\bar
       u\|_{W^{1,q}(\Gamma)} + \|\bar u\|_{H^{2-1/q}(\mathcal K_2)}\right)
     \left(\|e_h\|_{H^1(\Gamma)} + \|e_h\|_{H^1(\Omega)}\right).
   \end{equation}
   Moreover, with an inverse inequality and a trace theorem we get
   \begin{equation*}
     \|e_h\|_{H^1(\Gamma)} \le c\,h^{-1/2}\,\|e_h\|_{H^{1/2}(\Gamma)} \le c\,h^{-1/2}\,\|e_h\|_{H^1(\Omega)}.
   \end{equation*}   
   Consequently, we deduce from \eqref{eq:error_eh_H1}
   \begin{equation*}
     \|e_h\|_{H^1(\Omega)} \le c\,h^{3/2-1/q}\left(1 + \|\bar
       u\|_{W^{1,q}(\Gamma)} + \|\bar u\|_{H^{2-1/q}(\mathcal K_2)}\right).
   \end{equation*}   
   Insertion into \eqref{eq:superapprox_third}  leads to
   \begin{equation*}
     a_{\bar u}(e_h,w-\Pi_h w) \le c\,h^{2-1/q}\,\left(1 + \|\bar
       u\|_{W^{1,q}(\Gamma)} + \|\bar u\|_{H^{2-1/q}(\mathcal K_2)}\right)\|e_h\|_{L^2(\Gamma)}.
   \end{equation*}
   Together with \eqref{eq:superapprox_Zhw} and \eqref{eq:superapprox_begin} we conclude the
   desired estimate for $Z_h$.
 \end{proof}

 \begin{lemma}\label{lem:postprocessing_third}
   Under the assumption \eqref{eq:assumption} there holds the estimate
   \begin{equation*}
     \|R_h \bar u - \bar u_h\|_{L^2(\Gamma)} \le c\,h^{2-2/q}\,\lnh
   \end{equation*}
   with \[c = c\left(\|\bar u\|_{H^{2-1/q}(\mathcal K_2)},\|\bar u\|_{W^{1,q}(\Gamma)},
       \|\bar y\|_{W^{2,q}(\Omega)},\|\bar p\|_{W^{2,q}(\Omega)}\right).\]
 \end{lemma}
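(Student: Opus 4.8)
The plan is to combine the discrete second-order coercivity of Lemma~\ref{lem:discrete_coerc} with the two first-order conditions and the superconvergence of the midpoint interpolant. Write $\delta u:=R_h\bar u-\bar u_h$ and observe that $R_h\bar u\in U_h^{ad}$, since the midpoint values of $\bar u$ inherit the bounds $u_a\le\bar u\le u_b$; moreover $R_h\bar u\to\bar u$ and $\bar u_h\to\bar u$, so for small $h$ both, and hence the whole segment $\xi=\bar u_h+t\delta u$, lie in $B_\varepsilon(\bar u)$. Starting from Lemma~\ref{lem:discrete_coerc} and the mean value theorem I would write
\[
\frac\delta4\|\delta u\|_{L^2(\Gamma)}^2\le j_h''(\xi)(\delta u,\delta u)=\big(j_h'(R_h\bar u)-j_h'(\bar u_h)\big)(\delta u),
\]
and then discard the second contribution: testing the discrete variational inequality \eqref{eq:opt_cond_discrete} with $w_h=R_h\bar u$ gives $j_h'(\bar u_h)(\delta u)\ge0$, whence
\[
\frac\delta4\|\delta u\|_{L^2(\Gamma)}^2\le j_h'(R_h\bar u)(\delta u)=\big(\alpha R_h\bar u-S_h(R_h\bar u)\,Z_h(R_h\bar u),\delta u\big)_{L^2(\Gamma)}.
\]

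Next I would insert the continuous product $\bar y\,\bar p=S(\bar u)Z(\bar u)$ and split the right-hand side into a finite element part and an interpolation/variational part,
\[
j_h'(R_h\bar u)(\delta u)=\big(\bar y\bar p-S_h(R_h\bar u)Z_h(R_h\bar u),\delta u\big)_{L^2(\Gamma)}+\big(\alpha R_h\bar u-\bar y\bar p,\delta u\big)_{L^2(\Gamma)}.
\]
For the first term a Leibniz-type decomposition $S(\bar u)Z(\bar u)-S_h(R_h\bar u)Z_h(R_h\bar u)=(S(\bar u)-S_h(R_h\bar u))Z(\bar u)+S_h(R_h\bar u)(Z(\bar u)-Z_h(R_h\bar u))$, routed through $S_h(\bar u)$ and $Z_h(\bar u)$, allows me to apply the $L^2(\Gamma)$ finite element estimates of Lemma~\ref{lem:postprocessing_first}, the superapproximation estimates of Lemma~\ref{lem:postprocessing_second} and the stability bounds of Lemma~\ref{lem:stability_ShZh}; this produces the leading contribution $c\,h^{2-2/q}\,\lnh\,\|\delta u\|_{L^2(\Gamma)}$, which dictates the final rate.

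The heart of the proof is the second term, which I would control through the pointwise structure of the projection formula \eqref{eq:proj_formula}. Setting $d:=\alpha\bar u-\bar y\bar p$ I distinguish the elements entirely contained in $\mathcal A$ or $\mathcal I$ (forming $\mathcal K_2$) from the switching elements $\mathcal K_1$. On an element $E\subset\mathcal A$ the control is constant, hence $R_h\bar u=\bar u$ and $\alpha R_h\bar u-\bar y\bar p=d$, while $\delta u=\bar u-\bar u_h$; since $\bar u_h$ respects the same active bound, the projection formula forces $d$ and $\bar u-\bar u_h$ to have opposite signs, so this contribution is $\le0$. On an element $E\subset\mathcal I$ one has $d\equiv0$, so $\alpha R_h\bar u-\bar y\bar p=\alpha(R_h\bar u-\bar u)$; pairing the piecewise constant $\delta u$ with the midpoint quadrature error and invoking the local estimates of Lemma~\ref{lem:local_est_midpoint} together with a discrete Cauchy--Schwarz inequality bounds these terms by $c\,h^{2-1/q}\|\bar u\|_{H^{2-1/q}(\mathcal K_2)}\|\delta u\|_{L^2(\Gamma)}$.

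The remaining and genuinely delicate part is the switching region $\mathcal K_1$, where the reduced regularity of $\bar u$ is concentrated, and which I expect to be the main obstacle. Here I would again split $\delta u=(R_h\bar u-\bar u)+(\bar u-\bar u_h)$: on each switching element $d$ still keeps a fixed sign, as it vanishes on the inactive portion and is signed on the active portion, so the $(\bar u-\bar u_h)$ piece remains $\le0$ and can be dropped, while the $(R_h\bar u-\bar u)$ piece and the midpoint error are estimated by means of the measure bound $|\mathcal K_1|\le ch$ from \eqref{eq:assumption}, the boundedness $|\delta u|\le u_b-u_a$ coming from the control constraints, the local midpoint estimates of Lemma~\ref{lem:local_est_midpoint}, and crucially the already established a priori rate $\|\bar u-\bar u_h\|_{L^2(\Gamma)}\le ch$ of Theorem~\ref{thm:convergence_full_disc}; together these render the $\mathcal K_1$ contribution of higher order than $h^{2-2/q}\lnh$. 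Collecting all terms I arrive at $\frac\delta4\|\delta u\|_{L^2(\Gamma)}^2\le c\,h^{2-2/q}\lnh\,\|\delta u\|_{L^2(\Gamma)}+(\text{higher order})$, and a final application of Young's inequality to absorb $\tfrac\delta8\|\delta u\|_{L^2(\Gamma)}^2$ yields the claimed estimate.
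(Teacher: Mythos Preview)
Your overall architecture---discrete coercivity, the mean value theorem, dropping $j_h'(\bar u_h)(\delta u)\ge 0$, and splitting off the finite-element part $\bar y\bar p-S_h(R_h\bar u)Z_h(R_h\bar u)$ via Lemmata~\ref{lem:postprocessing_first} and~\ref{lem:postprocessing_second}---is exactly what the paper does, and your analysis on fully active and fully inactive elements is sound. The genuine gap is in your handling of the switching region $\mathcal K_1$.

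After you drop the sign-favorable piece $(d,\bar u-\bar u_h)_{\mathcal K_1}\le 0$, the residual contributions on $\mathcal K_1$---namely $(\alpha(R_h\bar u-\bar u),\delta u)_{\mathcal K_1}$ and $(d,R_h\bar u-\bar u)_{\mathcal K_1}$---can at best be bounded by $c\,h^{3/2-1/q}\|\delta u\|_{L^2(\Gamma)}$: one has $\|R_h\bar u-\bar u\|_{L^2(\mathcal K_1)}\le c|\mathcal K_1|^{1/2}\|R_h\bar u-\bar u\|_{L^\infty(\Gamma)}\le c\,h^{3/2-1/q}$ from Lemma~\ref{lem:local_est_midpoint} and \eqref{eq:assumption}, and the same order for $\|d\|_{L^2(\mathcal K_1)}$ since $d$ vanishes at the switching point but is only $W^{1,q}$. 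None of the ingredients you list (the measure bound, $|\delta u|\le u_b-u_a$, or the first-order rate from Theorem~\ref{thm:convergence_full_disc}) improves this exponent. Consequently your argument caps the rate at $h^{3/2-1/q}$, i.e.\ essentially $h^{3/2}$, not $h^{2-2/q}\lnh$.

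The paper avoids this entirely by a different idea: it evaluates the \emph{pointwise} optimality condition at the interpolation nodes of $R_h$, which yields $(\alpha R_h\bar u - R_h(\bar y\bar p),\,R_h\bar u-\bar u_h)_{L^2(\Gamma)}\le 0$ directly, without any casewise analysis. Subtracting this from $j_h'(R_h\bar u)(\delta u)$ leaves, in addition to the finite-element part you already treated, only the interpolation error $(R_h(\bar y\bar p)-\bar y\bar p,\delta u)_{L^2(\Gamma)}$ of the \emph{globally smooth} product $\bar y\bar p\in H^{2-1/q}(\Gamma)$, for which Lemma~\ref{lem:int_error_Rh} gives the full rate $h^{2-1/q}\|\delta u\|_{L^2(\Gamma)}$ uniformly on $\Gamma$. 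The point is that the kinks of $\bar u$ are irrelevant once the midpoint error is shifted from $\bar u$ to $\bar y\bar p$.
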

 \begin{proof}
   We observe that each function $\xi:=t\, R_h \bar u + (1-t)\, \bar u_h$ for 
   $t\in [0,1]$ satisfies 
   \begin{equation*}
     \|\bar u - \xi\|_{L^2(\Gamma)} \le t \|\bar u - R_h\bar u\|_{L^2(\Gamma)} + (1-t)\|\bar u - \bar u_h\|_{L^2(\Gamma)} < \varepsilon,
   \end{equation*}
   for arbitrary $\varepsilon>0$ provided that $h$ is sufficiently small.
   This follows from the convergence of the midpoint interpolant, see Lemma \ref{lem:local_est_midpoint}, and
   convergence of $\bar u_h$ towards $\bar u$, see Theorem \ref{thm:convergence_fully_discrete}.
   Hence, with the coercivity of $j_h''$
   proved in Lemma \ref{lem:discrete_coerc} and the mean value theorem we conclude
   \begin{align*}
     \frac\delta4\,\|R_h \bar u - \bar u_h\|_{L^2(\Gamma)}^2
     &\le j_h''(\xi)(R_h \bar u - \bar u_h)^2 \\
     &= j_h'(R_h \bar u)(R_h \bar u - \bar u_h) - j_h'(\bar u_h)(R_h \bar u - \bar u_h).     
   \end{align*}
   For the latter term we exploit the discrete optimality condition and the fact that
   the continuous optimality condition holds even pointwise. This implies the inequality
   \begin{equation*}
     j_h'(\bar u_h)(R_h\bar u - \bar u_h) \ge 0 \ge (\alpha\,R_h \bar u -
     R_h(\bar y\,\bar p),R_h \bar u - \bar u_h)_{L^2(\Gamma)}.
   \end{equation*}
   Insertion into the estimate above implies
   \begin{align}\label{eq:supercloseness_begin}
     &\frac\delta4\|R_h \bar u - \bar u_h\|_{L^2(\Gamma)}^2 \nonumber\\
%     &\quad \le \left(R_h(\bar y\,\bar p)-S_h(R_h \bar u)\,Z_h(R_h \bar u), R_h \bar u - \bar u_h\right)_{L^2(\Gamma)} \nonumber\\
     &\quad \le \left(R_h(\bar y\,\bar p)-\bar y\,\bar p
       +\bar y\,\bar p -S_h(R_h \bar u)\,Z_h(R_h \bar u), R_h \bar u - \bar u_h\right)_{L^2(\Gamma)}.
   \end{align}
   The right-hand side can be decomposed into two parts.
   With appropriate intermediate functions we obtain for the latter one
   \begin{align*}
     &(\bar y\, \bar p - S_h(R_h \bar u)\,Z_h(R_h \bar u), R_h \bar u - \bar u_h)_{L^2(\Gamma)} \\
     &\quad = ((\bar y-S_h(R_h\bar u))\, \bar p + S_h(R_h\bar u)\,(\bar p-Z_h(R_h\bar u)), R_h\bar u-\bar u_h)_{L^2(\Gamma)} \\
     &\quad \le c\,\big(\|\bar y-S_h(R_h\bar u)\|_{L^2(\Gamma)}\,\|\bar p\|_{L^\infty(\Gamma)} \\
       &\qquad + \|\bar p-Z_h(R_h\bar u)\|_{L^2(\Gamma)}\, \|S_h(R_h\bar u)\|_{L^\infty(\Gamma)}
       \big)\,\|R_h\bar u-\bar u_h\|_{L^2(\Gamma)}.
   \end{align*}
   Moreover, we apply the triangle inequality and the estimates from
   Lemmata \ref{lem:postprocessing_first} and \ref{lem:postprocessing_second} 
   to deduce 
   \begin{align*}
     \|\bar y-S_h(R_h\bar u)\|_{L^2(\Gamma)}
     &\le \|\bar y-S_h(\bar u)\|_{L^2(\Gamma)} + \|S_h(\bar u)-S_h(R_h\bar u) \|_{L^2(\Gamma)} \\
     &\le c\,h^{2-2/q}\,\lnh.
     % \left(1+\|\bar u\|_{H^{2-1/q}(\mathcal K_2)} + \|\bar u\|_{W^{1,q}(\Gamma)}
     %  + \|\bar y\|_{W^{2,q}(\Omega)}\right).
   \end{align*}
   Analogously, one can derive an estimate for the term $\|\bar p - Z_h(R_h\bar u)\|_{L^2(\Gamma)}$.
   Moreover, we apply Lemmata \ref{lem:stability_SZ} and \ref{lem:stability_ShZh} to bound the norms
   of $p=Z(\bar u)$ and $S_h(R_h \bar u)$, respectively.
   All together we obtain the estimate
   \begin{align}\label{eq:supercloseness_first}
     &(\bar y\,\bar p-S_h(R_h \bar u)\,Z_h(R_h \bar u), R_h \bar u - \bar
       u_h)_\Gamma %\nonumber\\&\qquad
       \le c\,h^{2-2/q}\,\lnh\,\|R_h\bar u-\bar u_h\|_{L^2(\Gamma)}.
       % \nonumber\\
     % &\quad\qquad \times 
     %   \left(\|\bar u\|_{H^{2-1/q}(\mathcal K_2)} + \|\bar u\|_{W^{1,q}(\Gamma)}
     %   + \|\bar y\|_{W^{2,q}(\Omega)} + \|\bar p\|_{W^{2,q}(\Omega)} \right)
   \end{align}
   Next we discuss that part of \eqref{eq:supercloseness_begin} which involves 
   the term $R_h(\bar y\,\bar p)-\bar y\,\bar p$ in the first argument.
   With an application of the local estimate from Lemma \ref{lem:int_error_Rh}
   we obtain
   \begin{align}\label{eq:supercloseness_second}
     (R_h (\bar y\,\bar p) - \bar y\,\bar p), R_h \bar u - \bar u_h)_{L^2(\Gamma)}
     & = \sum_{E\in\mathcal E_h} [R_h\bar u - \bar u_h]_E\int_E
       \left(\bar y\,\bar p-R_h (\bar y\,\bar p))\right) \nonumber\\
     &\le c\,h^{2-1/q}\,\|R_h\bar u- \bar u_h\|_{L^2(\Gamma)}\,\|\bar y\,\bar p\|_{H^{2-1/q}(\Gamma)}.
   \end{align}
   With \cite[Theorem 1.4.4.2]{Gri85} and a trace theorem we conclude
   \begin{align*}
     \|\bar y\,\bar p\|_{H^{2-1/q}(\Gamma)}
     \le c\,\|\bar y\|_{W^{2-1/q,q}(\Gamma)}\,\|\bar p\|_{W^{2-1/q,q}(\Gamma)}
     \le c\,\|\bar y\|_{W^{2,q}(\Omega)}\,\|\bar p\|_{W^{2,q}(\Omega)}.
   \end{align*}
   Insertion of the estimates \eqref{eq:supercloseness_first} and \eqref{eq:supercloseness_second}
   into \eqref{eq:supercloseness_begin}, and dividing the resulting estimate by
   $\|R_h u - u_h\|_{L^2(\Gamma)}$, leads to the desired result.
 \end{proof}
 Now we are in the position to state the main result of this section.
 \begin{theorem}\label{thm:postprocessing}
   Let $(\bar y,\bar u,\bar p)$ be a local solution of \eqref{eq:opt_cond}
   satisfying Assumption \ref{eq:assumption}. Moreover, let $\{\bar u_h\}_{h>0}$ be a sequence of
   local solutions of \eqref{eq:opt_cond_discrete} such that for sufficiently small $\varepsilon,h_0>0$
   the property 
   \[\|\bar u - \bar u_h\|_{L^2(\Gamma)} < \varepsilon\qquad\forall h < h_0\]
   holds. Then, the error estimate
   \begin{equation*}
     \|\bar u - \tilde u_h\|_{L^2(\Gamma)} \le c\,h^{2-2/q}\,\lnh
   \end{equation*}
   is satisfied, where
   $c=c(\|\bar u\|_{W^{1,q}(\Gamma)},\|\bar u\|_{H^{2-1/q}(\mathcal K_2)},\|\bar y\|_{W^{2,q}(\Omega)}, \|\bar p\|_{W^{2,q}(\Omega)})$.
 \end{theorem}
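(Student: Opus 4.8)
The plan is to exploit that the exact control and its postprocessed approximation are both obtained by a \emph{pointwise} application of the projection formula, combined with the non-expansivity of $\Pi_{ad}$ in $L^2(\Gamma)$. Since the projection formula \eqref{eq:proj_formula} holds pointwise almost everywhere on $\Gamma$ and $\tilde u_h$ is defined by the same pointwise operation applied to the discrete states in \eqref{eq:def_postprocessing}, the Lipschitz continuity of $\Pi_{ad}$ with constant one immediately gives
\begin{equation*}
  \|\bar u - \tilde u_h\|_{L^2(\Gamma)}
  \le \frac1\alpha\,\|\bar y\,\bar p - \bar y_h\,\bar p_h\|_{L^2(\Gamma)}.
\end{equation*}
Thus the whole estimate reduces to a sharp $L^2(\Gamma)$-bound for the product $\bar y\,\bar p - \bar y_h\,\bar p_h$, the parameter $\alpha>0$ being absorbed into the generic constant.

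First I would split this product error by the Leibniz-type identity $\bar y\,\bar p - \bar y_h\,\bar p_h = (\bar y-\bar y_h)\,\bar p + \bar y_h\,(\bar p-\bar p_h)$ and apply the Cauchy--Schwarz inequality on $\Gamma$. Bounding $\|\bar p\|_{L^\infty(\Gamma)}$ and $\|\bar y_h\|_{L^\infty(\Gamma)}$ by the stability estimates from Lemmata \ref{lem:stability_SZ} and \ref{lem:stability_ShZh} (the positive control bounds furnish the $L^p(\Gamma)$-regularity of $\bar u$ and $\bar u_h$ required there, and also guarantee that the coercivity assumption \eqref{eq:ass_coercivity} is met by every control in play), this reduces matters to the two boundary state errors $\|\bar y-\bar y_h\|_{L^2(\Gamma)}$ and $\|\bar p-\bar p_h\|_{L^2(\Gamma)}$, where $\bar y_h=S_h(\bar u_h)$ and $\bar p_h=Z_h(\bar u_h)$.

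Next I would estimate each boundary state error through the three-term decomposition \eqref{eq:postprocessing_basic} using the midpoint interpolant $R_h\bar u$ as an intermediate control. The first term $\|\bar y-S_h(\bar u)\|_{L^2(\Gamma)}$ is the genuine finite element error controlled by Lemma \ref{lem:postprocessing_first} at the rate $h^{2-2/q}\lnh$; the second term $\|S_h(\bar u)-S_h(R_h\bar u)\|_{L^2(\Gamma)}$ is the superconvergence contribution bounded by the strictly higher rate $h^{2-1/q}$ in Lemma \ref{lem:postprocessing_second}; and the third term $\|S_h(R_h\bar u)-\bar y_h\|_{L^2(\Gamma)}=\|S_h(R_h\bar u)-S_h(\bar u_h)\|_{L^2(\Gamma)}$ I would handle by passing from $L^2(\Gamma)$ to $H^1(\Omega)$ via a trace theorem, invoking the Lipschitz property of $S_h$ from Lemma \ref{lem:Sh_lipschitz}, and finally applying the supercloseness estimate $\|R_h\bar u-\bar u_h\|_{L^2(\Gamma)}\le c\,h^{2-2/q}\lnh$ of Lemma \ref{lem:postprocessing_third}. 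The adjoint error $\|\bar p-\bar p_h\|_{L^2(\Gamma)}$ is treated identically with $S_h$ replaced by $Z_h$ throughout. Collecting the three contributions gives $\|\bar y-\bar y_h\|_{L^2(\Gamma)}+\|\bar p-\bar p_h\|_{L^2(\Gamma)}\le c\,h^{2-2/q}\lnh$, and reinsertion into the reduction above yields the claimed rate.

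Because every ingredient has already been established, the theorem is essentially a synthesis; the genuine difficulty resides in the supercloseness result of Lemma \ref{lem:postprocessing_third}, which itself rests on the discrete coercivity of $j_h''$ (Lemma \ref{lem:discrete_coerc}), on testing the continuous and discrete optimality conditions against $R_h\bar u-\bar u_h$, and on the delicate local interpolation estimates for $R_h$ on curved elements. The one point requiring care in the assembly is to verify that all constants remain finite: this relies on the elevated regularity $\bar u\in W^{1,q}(\Gamma)\cap H^{2-1/q}(\mathcal K_2)$ and $\bar y,\bar p\in W^{2,q}(\Omega)$ provided by Lemma \ref{lem:regularity_improved} under \ref{ass:domain1b}, together with the switching-set hypothesis \eqref{eq:assumption}, so that the dominant terms accumulate precisely to $h^{2-2/q}\lnh$ rather than to a lower power of $h$.
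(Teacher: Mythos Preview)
Your proposal is correct and follows essentially the same route as the paper: non-expansivity of $\Pi_{ad}$ reduces the estimate to $\|\bar y\,\bar p-\bar y_h\,\bar p_h\|_{L^2(\Gamma)}$, a Leibniz splitting together with the $L^\infty$-stability of Lemmata~\ref{lem:stability_SZ} and~\ref{lem:stability_ShZh} leaves the two boundary state errors, and these are handled by the three-term decomposition~\eqref{eq:postprocessing_basic} with Lemmata~\ref{lem:postprocessing_first}, \ref{lem:postprocessing_second}, and~\ref{lem:postprocessing_third}. Your explicit treatment of the third term via the trace theorem and the Lipschitz estimate of Lemma~\ref{lem:Sh_lipschitz} is precisely the mechanism the paper leaves implicit when it cites Lemma~\ref{lem:postprocessing_third} for that contribution.
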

 \begin{proof}
   With the projection formulas \eqref{eq:proj_formula} and \eqref{eq:def_postprocessing}, respectively, the non-expansivity of the operator $\Pi_{ad}$ and the triangle inequality we obtain
   \begin{align*}
     \|\bar u - \tilde u_h\|_{L^2(\Gamma)}
     &\le c\,\|\Pi_{ad} \left(\frac1\alpha\,\bar y\, \bar p\right)
       - \Pi_{ad} \left(\frac1\alpha\,\bar y_h\, \bar p_h\right)\|_{L^2(\Gamma)} \\
     &\le \frac{c}\alpha\,\left(\|\bar y - \bar y_h\|_{L^2(\Gamma)}\,\|\bar p\|_{L^\infty(\Omega)}
       + \|\bar y_h\|_{L^\infty(\Omega)}\,\|\bar p - \bar p_h\|_{L^2(\Gamma)}\right).
   \end{align*}
   The assertion follows after insertion of \eqref{eq:postprocessing_basic} together with
   the estimates obtained in Lemmata \ref{lem:postprocessing_first}, \ref{lem:postprocessing_second}
   and \ref{lem:postprocessing_third}, as well as the stability estimates of $Z$ and $S_h$
   from Lemmata \ref{lem:props_S} and \ref{lem:stability_ShZh}, respectively.
 \end{proof}

 \section{Numerical experiments}\label{sec:experiments}
 
 It is the purpose of this last section to confirm the theoretical results
 by numerical experiments. To this end, we reformulate the discrete optimality condition
 \eqref{eq:deriv_jh} and use the equivalent projection formula
 \begin{equation}\label{eq:proj_formula_discrete}
   u_h = \Pi_{ad}\left(\frac1\alpha R_h^{\text{Simp}}(S_h(u_h)\,Z_h(u_h))\right).
 \end{equation}
 Here, $R_h^{\text{Simp}}\colon C(\Gamma)\to U_h$ is a projection operator based on the Simpson rule,
 this is, 
 \begin{equation*}
   [R_h^{\text{Simp}}(v)]_{E} = \frac16\left(v(x_{E_1}) + 4v(x_{E}) + v(x_{E_2})\right),
 \end{equation*}
 where $x_{E_1}$ and $x_{E_2}$ are the endpoints of the boundary edge $E\in\mathcal E_h$
 and $x_E$ its midpoint.
 The numerical solution of \eqref{eq:proj_formula_discrete} is computed by a semismooth Newton-method.

 The input data of the considered benchmark problem is chosen as follows.
 The computational domain is the unit square $\Omega:=(0,1)^2$.
 We define the exact Robin parameter $\tilde u$ by
 \[
   \tilde u(x_1,x_2):=
   \begin{cases}
     \max(-0.01,\ 1-30(x_1-0.5)^2), &\mbox{if}\ x_1=0, \\
     -0.01, &\mbox{otherwise},
   \end{cases}
 \]
 and use the desired state $y_d = S_h(\tilde u)$
 and the right-hand side $f\equiv 0$. Moreover, the regularization
 parameter $\alpha = 10^{-2}$ and the control bounds $u_a=0$, $u_b=\infty$ are used.
 
 We compute the numerical solution of our benchmark problem on a sequence of meshes
 starting with $\mathcal T_{h_0}$, $h_0=\sqrt{2}$, 
 consisting of two rectangular triangles only. The remaining grids $\mathcal T_{h_i}$,
 $i=1,2,\ldots,$ are obtained by a double bisection through the longest edge of each element
 applied to the previous mesh. This guarantees
 $h_i = \frac12 h_{i-1}$. In order to compute the discretization error we use the solution
 on the mesh $\mathcal T_{h_{11}}$ as an approximation of the exact solution, this means,
 \begin{equation*}
   \|\bar u-\bar u_{h_i}\|_{L^2(\Gamma)} \approx \|\bar u_{h_{11}} - \bar u_{h_i}\|_{L^2(\Gamma)},\quad i=0,1,\ldots,10.
 \end{equation*}
 Analogously, we compute the error for the approximation obtained by the postprocessing strategy.
 However, in this case the exact solution is approximated by
 $\bar u\approx \Pi_{ad}(\frac1\alpha \bar y_{h_{11}}\,\bar p_{h_{11}})$. The error norms
 $\|\Pi_{ad}(\frac1\alpha \bar y_{h_{11}}\,\bar p_{h_{11}}) -
 \Pi_{ad}(\frac1\alpha \bar y_{h_{i}}\,\bar p_{h_{i}})\|_{L^2(\Gamma)}$, $i=0,\ldots,11$, are computed element-wise by the Simpson quadrature formula
 with the modification that elements $E$ are split at those points where
 $\bar y_{h_i}\,\bar p_{h_i}$ or $\bar y_{h_{11}}\,\bar p_{h_{11}}$ change its sign.

 \begin{figure}
   \begin{center}
     \includegraphics[width=.8\textwidth]{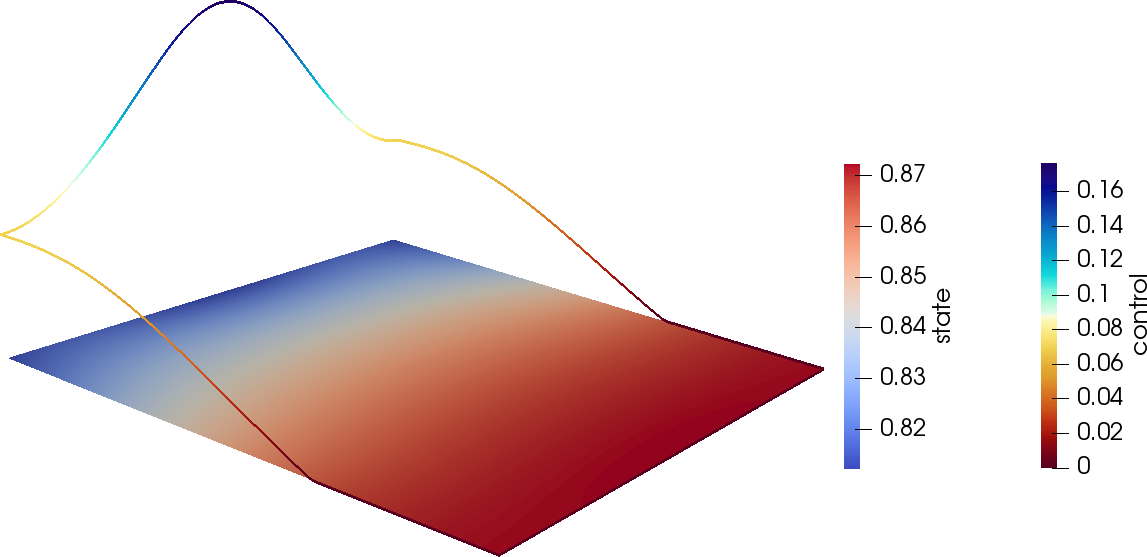}     
   \end{center}
   \caption{Optimal state (surface) and the optimal control (boundary curve) for
     the benchmark problem.}
   \label{fig:control_state}
 \end{figure}
 The optimal control and corresponding state of our benchmark problem is illustrated in
 Figure~\ref{fig:control_state} and the measured discretization errors as well as the experimentally
 computed convergence rates are summarized in Table \ref{tab:experiment}.
 As we have proven in Theorem \ref{thm:convergence_full_disc} the numerical solutions obtained by a
 full discretization using a piecewise constant control approximation converge with the optimal
 convergence rate $1$. Moreover, it is confirmed that the solution obtained with a
 postprocessing step, see Theorem \ref{thm:postprocessing}, converges with order $2$.
 Note that we actually proved the results for the case that the boundary is smooth which is
 indeed not the case in our example. However, the corner singularities contained in the solution are
 for a $90^\circ$-corner comparatively mild so that the regularity results from Lemma
 \ref{lem:regularity_improved} remain valid.
 \begin{table}[htb]
   \begin{center}
     \begin{tabular}{lrrrr}
       \toprule
       \multicolumn{1}{l}{$i$} & \multicolumn{1}{l}{DOF} & \multicolumn{1}{l}{BD DOF} & \multicolumn{1}{l}{$\|u-u_{h_i}\|_{L^2(\Gamma)}$ (eoc)} & \multicolumn{1}{l}{$\|u-\tilde u_{h_i}\|_{L^2(\Gamma)}$ (eoc)} \\  \midrule
       $3$ & 113 & 32 & 1.60e-2 (1.06) & 1.81e-2 (1.15) \\ 
       $4$ & 353 & 64 & 5.81e-3 (1.46) & 4.43e-3 (2.03) \\ 
       $5$ & 1217 & 128 & 2.56e-3 (1.18) & 1.03e-3 (2.11) \\ 
       $6$ & 4481 & 256 & 1.24e-3 (1.05) & 1.65e-4 (2.64) \\ 
       $7$ & 17153 & 512 & 6.17e-4 (1.00) & 7.52e-5 (1.13) \\ 
       $8$ & 67073 & 1024 & 3.06e-4 (1.01) & 1.79e-5 (2.07) \\ 
       $9$ & 265217 & 2048 & 1.49e-4 (1.04) & 4.31e-6 (2.05) \\ 
       $10$ & 1054716 & 4096 & 6.67e-5 (1.16) & 8.50e-7 (2.34) \\
       \bottomrule
     \end{tabular}
     \caption{Experimentally computed errors for the full discretization and the postprocessing approach with the corresponding experimental convergence rates (in parentheses)}
     \label{tab:experiment}
   \end{center}
\end{table}

\begin{appendix}
  \section{Proof of Theorem \ref{thm:fe_error_linfty}}
  \label{app:maximum_norm_estimate}
  The proof of the maximum norm estimate presented in Theorem \ref{thm:fe_error_linfty}
  follows basically from the arguments of \cite{FR76,Sco76}.
  For the convenience of the reader we want to repeat the proof 
  as the result of Theorem~\ref{thm:fe_error_linfty} is, for our specific situation, not
  directly available in the literature. The novelty of the present proof is that
  it includes curved elements as well as Robin boundary conditions.
  In the aforementioned articles, a representation of
  the error term based on a regularized Dirac function is used. This function forms
  the right-hand side of a dual problem whose solution is an approximation of 
  Green's function. The main difficulty is to bound this solution in appropriate norms.

  To this end, we denote by $T^*\in\mathcal T_h$ the element where $|y-y_h|$ attains its maximum.
  The regularized Dirac function is defined by
  $\delta^h(x) := |T^*|^{-1}\sgn(y(x)-y_h(x))$ if $x\in T^*$, and $\delta^h(x):=0$ if $x\not\in T^*$.
  The corresponding Green's function denoted by $g^h$ solves the problem
  \begin{equation}\label{eq:reg_green}
    -\Delta g^h + g^h = \delta^h \quad\mbox{in}\quad\Omega,\qquad \partial_n g^h + u\,g^h = 0\quad\mbox{on}\quad\Gamma.
  \end{equation}
  The Dirac function satisfies the properties
  \begin{equation}\label{eq:prop_dirac}
    \|\delta^h\|_{L^1(\Omega)} \le c,\qquad \|\delta^h\|_{L^2(\Omega)} \le c\,h^{-1}.
  \end{equation}
  
  We start our considerations with some a priori estimates for the solution $g^h$.
  \begin{lemma}\label{lem:prop_gh}
    The following a priori estimates hold:
    \begin{align*}
      &(i)&  \|g^h\|_{H^1(\Omega)} &\le c\,\lnh^{1/2}   &(ii)&& \|g^h\|_{H^2(\Omega)} &\le c\,h^{-1} \\
      &(iii)& \|g^h\|_{L^\infty(\Omega)} &\le c\,\lnh &&&&
    \end{align*}
  \end{lemma}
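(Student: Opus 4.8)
The plan is to establish the three bounds in the order $(ii)$, $(iii)$, $(i)$, because the energy estimate $(i)$ is an immediate consequence of the maximum-norm bound $(iii)$, which itself is the genuinely technical part. Throughout I would work with the weak form of \eqref{eq:reg_green}, namely $a_u(g^h,v)=(\delta^h,v)_{L^2(\Omega)}$ for all $v\in H^1(\Omega)$, and repeatedly use that under \ref{ass:domain1b} the fixed coefficient $u\in H^{1/2}(\Gamma)$ is non-negative, so that the Robin term $(u\,g^h,g^h)_{L^2(\Gamma)}$ is $\ge 0$ and the constants in Lemma~\ref{lem:props_S} depending on $\|u\|_{H^{1/2}(\Gamma)}$ stay harmless. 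The bound $(ii)$ is then obtained directly from the $H^2$-regularity statement Lemma~\ref{lem:props_S}c) applied with right-hand side $f=\delta^h\in L^2(\Omega)$ and $g=0$: it gives $\|g^h\|_{H^2(\Omega)}\le c\,\|\delta^h\|_{L^2(\Omega)}$, and the second property in \eqref{eq:prop_dirac} yields $\|g^h\|_{H^2(\Omega)}\le c\,h^{-1}$.

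The hard part is the maximum-norm estimate $(iii)$, for which I would follow the weighted-norm technique of Frehse and Rannacher \cite{FR76,Sco76}. Let $x_0$ be the barycenter of $T^*$ and introduce the regularized distance $\sigma(x)=(|x-x_0|^2+h^2)^{1/2}$. First I would test \eqref{eq:reg_green} against carefully chosen weighted multiples of $g^h$, that is powers of $\sigma$ times $g^h$, discarding the favorable sign of the Robin term, to derive weighted energy estimates for $g^h$ and $\nabla g^h$; the cross terms generated by $\nabla(\sigma^\beta g^h)$ then have to be absorbed. A weighted Sobolev inequality converts these into a pointwise bound for $\|g^h\|_{L^\infty(\Omega)}$, and the single logarithm enters through the elementary computation $\int_\Omega\sigma^{-2}\,\mathrm dx\sim\lnh$, which reflects the $\ln$-singularity of the two-dimensional fundamental solution of $-\Delta+1$. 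Together with the bounds \eqref{eq:prop_dirac} on $\delta^h$ this gives $\|g^h\|_{L^\infty(\Omega)}\le c\,\lnh$.

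The main obstacle, and the reason the classical references do not apply verbatim, is twofold: the weighted local regularity must be carried out up to the boundary for the homogeneous \emph{Robin} condition rather than in the interior or for Dirichlet/Neumann data, and the curved boundary elements force this analysis into boundary-fitted coordinates, controlled through assumption \ref{ass:domain2} and the global $C^3$-smoothness in \ref{ass:domain1b}. Retaining a single power of $\lnh$, rather than a higher power or an algebraic factor $h^{-\varepsilon}$, requires sharp bookkeeping of the weighted norms across the dyadic shells around $x_0$; this is where I expect the bulk of the effort to lie. Finally, estimate $(i)$ follows at once: testing the weak form with $v=g^h$ and using the non-negativity of the Robin term gives $\|g^h\|_{H^1(\Omega)}^2\le a_u(g^h,g^h)=(\delta^h,g^h)_{L^2(\Omega)}\le\|\delta^h\|_{L^1(\Omega)}\,\|g^h\|_{L^\infty(\Omega)}\le c\,\lnh$ by \eqref{eq:prop_dirac} and $(iii)$, whence $\|g^h\|_{H^1(\Omega)}\le c\,\lnh^{1/2}$.
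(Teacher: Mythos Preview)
Your treatment of $(ii)$ and the deduction of $(i)$ from $(iii)$ match the paper. For $(iii)$, however, the paper takes a much shorter route than the weighted-norm machinery you outline. Rather than testing against $\sigma^\beta g^h$ and tracking weighted energies over dyadic shells, the paper introduces the Ritz projection $g_h^h\in V_h$ of $g^h$ and splits
\[
\|g^h\|_{L^\infty(\Omega)}\le \|g^h-g_h^h\|_{L^\infty(\Omega)}+c\,\lnh^{1/2}\,\|g_h^h\|_{H^1(\Omega)},
\]
the second term coming from the discrete Sobolev inequality. The stability $\|g_h^h\|_{H^1(\Omega)}\le c\,\|g^h\|_{H^1(\Omega)}$ is standard, and the first term is bounded by a constant via an inverse inequality, the Aubin--Nitsche $L^2$-estimate, and the already-proved $(ii)$: $\|g^h-g_h^h\|_{L^\infty(\Omega)}\le c\,h\,\|g^h\|_{H^2(\Omega)}\le c$. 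Feeding this back into the coercivity estimate $\gamma_u\|g^h\|_{H^1(\Omega)}^2\le c\,\|g^h\|_{L^\infty(\Omega)}$ (your own starting point for $(i)$) yields, after Young's inequality and a kick-back, first $(i)$ and then $(iii)$ as a by-product of the same intermediate bounds. So the paper's logical order is $(ii)\Rightarrow(i)\Rightarrow(iii)$, with no weights appearing in this lemma at all; the weighted technique is deferred to the subsequent lemma on $\|\sigma\nabla^2 g^h\|_{L^2(\Omega)}$. Your approach would succeed in principle, but it front-loads technical effort that the paper avoids entirely here by exploiting the finite-element space through $g_h^h$.
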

  \begin{proof}
    (ii) To show the estimate in the $H^2(\Omega)$-norm we apply the a priori estimate from
    Lemma \ref{lem:props_S}c) and $\|\delta^h\|_{L^2(\Omega)} \le c h^{-1}$.\\[.5em]    
    (i) The weak form of \eqref{eq:reg_green} and the property \eqref{eq:prop_dirac} imply 
    \begin{align}\label{eq:H1_to_Linfty}
      \gamma_u\,\|g^h\|_{H^1(\Omega)}^2
      &\le a_u(g^h,g^h) = (\delta^h,g^h)_{L^2(\Omega)} \le c\,\|g^h\|_{L^\infty(\Omega)}
        \nonumber\\
      &\le c \left(\|g^h-g_h^h\|_{L^\infty(\Omega)} + \lnh^{1/2}\,\|g_h^h\|_{H^1(\Omega)}\right),
    \end{align} 
    where the discrete Sobolev inequality was applied in the last step.
    The function $g_h^h\in V_h$ is the Ritz-projection of $g^h$ and
    satisfies the usual stability estimate
    \begin{equation}\label{eq:linfty_gh}
      \|g_h^h\|_{H^1(\Omega)} \le c\,\|g^h\|_{H^1(\Omega)}.
    \end{equation}
    Next, we derive a suboptimal error estimate for the finite-element error in the $L^\infty(\Omega)$-norm.
    Using an inverse inequality, estimates for the interpolant $\Pi_h$ from \eqref{eq:int_error},
    the Aubin-Nitsche trick and the a priori estimate shown already in the $H^2(\Omega)$-norm 
    we deduce 
    \begin{align}\label{eq:linfty_gh-ghh}
      \|g^h-g^h_h\|_{L^\infty(\Omega)}
      &\le \|g^h - \Pi_h g^h\|_{L^\infty(\Omega)}
        + c\,h^{-1}\left(\|g^h - \Pi_h g^h\|_{L^2(\Omega)} + \|g^h - g_h^h\|_{L^2(\Omega)}\right)\nonumber\\
      &\le c\,h\,\|g^h\|_{H^2(\Omega)} \le c.
    \end{align}
    Note that we hide the dependency on $u$, or more precisely on $\|u\|_{H^{1/2}(\Omega)}$
    and lower-order norms, in the generic constant to simplify the notation.
    Insertion of \eqref{eq:linfty_gh} and \eqref{eq:linfty_gh-ghh} into
    \eqref{eq:H1_to_Linfty}
    yields with Young's inequality
    \begin{equation*}
      \gamma_u\,\|g^h\|_{H^1(\Omega)}^2 \le c\,\lnh + \frac12\,\gamma_u\,\|g^h\|_{H^1(\Omega)}^2.
    \end{equation*}
    The desired estimate follows form a kick-back-argument.\\[.5em]    
    (iii) The $L^\infty(\Omega)$-estimate follows directly from \eqref{eq:H1_to_Linfty},
    \eqref{eq:linfty_gh} and \eqref{eq:linfty_gh-ghh} using the inequality (i).
  \end{proof}
  Next, we show an a priori estimate for $g^h$ in a weighted norm.
  This is the key idea which allows us to bound second derivatives
  by a logarithmic factor only. The weight function we will use is defined by
  \begin{equation*}
    \sigma(x) := \sqrt{|x-x^*|_2^2 + c\,h^2},
  \end{equation*}
  with $x^*:=\argmax_{x\in T^*} |y-y_h|(x)$. This function satisfies
  \begin{equation}\label{eq:props_sigma}
    \|\sigma^{-1}\|_{L^2(\Omega)}\le c\,\lnh^{1/2},\qquad \|\sigma^{-1}\|_{L^2(\Gamma)} \le c\,h^{-1/2}\,\lnh^{1/4}.
  \end{equation}
  The first property follows from a simple computation.
  The second estimate is a consequence of the multiplicative trace theorem
  $
  \|\sigma^{-1}\|_{L^2(\Gamma)} \le c\,\|\sigma^{-1}\|_{L^2(\Omega)}^{1/2}\,\|\sigma^{-1}\|_{H^1(\Omega)}^{1/2}$, see \cite[Theorem~1.6.6]{BS02},
  and the chain rule $\nabla \sigma^{-1} = -\sigma^{-2} \nabla \sigma$ and it is
  simple to show $\|\nabla\sigma\|_{L^\infty(\Omega)} \le 1$
  and $\|\sigma^{-2}\|_{L^2(\Omega)} \le c\,h^{-1}$.
  \begin{lemma}
    Assume that $u\in H^{1/2}(\Gamma)$. There holds the estimate
    \begin{equation*}
      \|\sigma\,\nabla^2 g^h \|_{L^2(\Omega)} \le c\,\lnh^{1/2}.
    \end{equation*}
  \end{lemma}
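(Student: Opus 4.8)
The plan is to reduce the weighted Hessian bound to a weighted estimate for the Laplacian via a Rellich-type integration by parts, and then to exploit the equation \eqref{eq:reg_green} together with the a priori bounds already collected in Lemma \ref{lem:prop_gh}. Writing $w:=g^h$ for brevity, I would start from $\int_\Omega \sigma^2\,|\nabla^2 w|^2$ and integrate by parts twice, each time moving a derivative onto $\nabla w$. Since $\Delta w = w-\delta^h$, this produces the principal term $\int_\Omega \sigma^2\,(\Delta w)^2$, a family of interior remainders in which exactly one derivative falls on the weight, namely terms of the type $\int_\Omega \sigma\,\nabla\sigma\cdot(\nabla^2 w)\,\nabla w$ and $\int_\Omega \sigma\,\nabla\sigma\cdot\nabla w\,\Delta w$, and boundary integrals over $\Gamma$.

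The interior remainders are the easy part. Because $\|\nabla\sigma\|_{L^\infty(\Omega)}\le 1$, each such term is bounded by $c\,\|\sigma\nabla^2 w\|_{L^2(\Omega)}\,\|\nabla w\|_{L^2(\Omega)}$ or $c\,\|\sigma\,\Delta w\|_{L^2(\Omega)}\,\|\nabla w\|_{L^2(\Omega)}$, and the first of these is absorbed into the left-hand side by Young's inequality. For the principal term I would use $\Delta w=w-\delta^h$ and split $\|\sigma\,\Delta w\|_{L^2(\Omega)}\le \|\sigma\,w\|_{L^2(\Omega)}+\|\sigma\,\delta^h\|_{L^2(\Omega)}$. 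As $\sigma$ is bounded on $\overline\Omega$, the first summand is controlled by $\|w\|_{H^1(\Omega)}\le c\,\lnh^{1/2}$ from Lemma \ref{lem:prop_gh}(i); as $\delta^h$ is supported in $T^*$, where $\sigma\le c\,h$, and $\|\delta^h\|_{L^2(\Omega)}\le c\,h^{-1}$ by \eqref{eq:prop_dirac}, the second summand is $O(1)$. Together with $\|\nabla w\|_{L^2(\Omega)}\le c\,\lnh^{1/2}$ this shows that all non-boundary contributions are bounded by $c\,\lnh$.

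The boundary integrals are the genuine difficulty. After decomposing $\nabla w$ on $\Gamma$ into its normal and tangential parts and inserting the Robin condition $\partial_n w=-u\,w$ from \eqref{eq:reg_green}, these terms reduce to integrals over $\Gamma$ involving $\sigma^2$, the curvature of $\Gamma$ (bounded, since the boundary is $C^3$ under \ref{ass:domain1b}), the tangential derivatives of $w$, and the factor $u$. The crucial point is to organize the tangential integration by parts so that no derivative ever falls on $u$, moving all tangential derivatives onto $\sigma^2$ and onto $w$ instead; this is essential because only $u\in H^{1/2}(\Gamma)$ is available. The resulting expressions are then estimated by a weighted Cauchy--Schwarz inequality on $\Gamma$, splitting off a factor $\sigma^{-1}$ and using the weighted trace bound $\|\sigma^{-1}\|_{L^2(\Gamma)}\le c\,h^{-1/2}\,\lnh^{1/4}$ from \eqref{eq:props_sigma}, the embedding $H^1(\Gamma)\hookrightarrow L^\infty(\Gamma)$ together with $\|w\|_{L^\infty(\Omega)}\le c\,\lnh$ from Lemma \ref{lem:prop_gh}(iii), and a trace theorem for the boundary values of $\nabla w$, which are controlled through $\|w\|_{H^2(\Omega)}\le c\,h^{-1}$ from Lemma \ref{lem:prop_gh}(ii); the a priori estimate of Lemma \ref{lem:props_S}c) enters here as well.

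Collecting all contributions yields $\|\sigma\,\nabla^2 w\|_{L^2(\Omega)}^2\le c\,\lnh$, and taking square roots gives the claim. I expect the main obstacle to be precisely the boundary analysis: arranging the tangential integrations so that $u$ is never differentiated, tracking the curvature terms arising from the non-flat $C^3$ boundary, and checking that the $h^{-1/2}$ growth carried by $\|\sigma^{-1}\|_{L^2(\Gamma)}$ is compensated by the smallness $\sigma\lesssim h$ near $x^*$ and by the weight gained in the interior estimates, so that each boundary term is again $O(\lnh)$ and only the benign logarithmic factor survives.
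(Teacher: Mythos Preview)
Your direct Rellich-identity approach is genuinely different from the paper's, and the boundary analysis you yourself flag as ``the genuine difficulty'' is a real gap rather than a routine technicality. The paper sidesteps it entirely by the Frehse--Rannacher device: writing $\sigma^2=\xi_1^2+\xi_2^2+c\,h^2$ with $\xi_i:=|x_i-x_i^*|$, one has $\|\sigma\,\nabla^2 g^h\|_{L^2(\Omega)}^2=\sum_i\|\xi_i\,\nabla^2 g^h\|_{L^2(\Omega)}^2+c\,h^2\,\|\nabla^2 g^h\|_{L^2(\Omega)}^2$, and the reverse product rule reduces each $\|\xi_i\,\nabla^2 g^h\|_{L^2(\Omega)}$ to $\|\nabla^2(\xi_i g^h)\|_{L^2(\Omega)}+\|\nabla g^h\|_{L^2(\Omega)}$. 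The key observation is that $\xi_i g^h$ itself solves a Robin problem with interior right-hand side $-2\,\partial_{x_i}g^h+\xi_i\,\delta^h\in L^2(\Omega)$ of norm $O(\lnh^{1/2})$ (since $|\xi_i|\lesssim h$ on $\supp\delta^h$) and boundary datum $g^h n_i\in H^{1/2}(\Gamma)$. The $H^2$ shift theorem of Lemma~\ref{lem:props_S}c) then gives $\|\xi_i g^h\|_{H^2(\Omega)}\le c\,(1+\|g^h\|_{H^1(\Omega)})\le c\,\lnh^{1/2}$, and the remaining term $h^2\|\nabla^2 g^h\|_{L^2(\Omega)}^2$ is handled by Lemma~\ref{lem:prop_gh}(ii). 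All boundary difficulties are absorbed into the shift theorem; no integration by parts on $\Gamma$ is ever performed.

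In your scheme, by contrast, integrating $\int_\Omega\sigma^2|\nabla^2 w|^2$ by parts unavoidably produces boundary integrals of the type $\int_\Gamma\sigma^2\,\partial_\tau w\,\partial_\tau(\partial_n w)$. After inserting $\partial_n w=-u\,w$, either a tangential derivative lands on $u$ (available only in $H^{1/2}(\Gamma)$), or one integrates by parts once more and is left with $\int_\Gamma u\,w\,\partial_\tau(\sigma^2\,\partial_\tau w)$, which contains $\sigma^2\,\partial_\tau^2 w$. The trace $\partial_\tau^2 w|_\Gamma$ lies only in $H^{-1/2}(\Gamma)$ with norm of order $h^{-1}$ via $\|g^h\|_{H^2(\Omega)}\le c\,h^{-1}$, while the factor $\sigma^2$ is $O(1)$ on most of $\Gamma$ and provides no global smallness. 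Pairing against $u\,w\,\sigma^2\in H^{1/2}(\Gamma)$ therefore yields a bound of order $h^{-1}$ at best, not $\lnh$; the tools you list, $\|\sigma^{-1}\|_{L^2(\Gamma)}\le c\,h^{-1/2}\lnh^{1/4}$ and $\|g^h\|_{L^\infty}\le c\,\lnh$, do not close this gap. The compensation you hope for from ``$\sigma\lesssim h$ near $x^*$'' does not materialize because the boundary contribution is not localized near $x^*$. This is precisely the obstacle the paper's auxiliary-PDE trick is designed to avoid.
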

  \begin{proof}
    We introduce the functions $\xi_i:=|x_i-x_i^*|$, $i=1,2$, which allow us to write
    \begin{equation}\label{eq:max_est_weighted_H2}
      \|\sigma\,\nabla^2 g^h\|_{L^2(\Omega)}^2 = \sum_{i=1}^2\|\xi_i\,\nabla^2 g^h\|_{L^2(\Omega)}^2
      + c\,h^2\,\|\nabla^2 g^h\|_{L^2(\Omega)}^2.
    \end{equation}
    With the reverse product rule we obtain
    \begin{equation}\label{eq:H2_reg_gh_1}
      \|\xi_i\,\nabla^2 g^h\|_{L^2(\Omega)}^2 \le \|\nabla^2(\xi_i\,g^h)\|_{L^2(\Omega)}^2 + \|\nabla g^h \|_{L^2(\Omega)}^2.
    \end{equation}
    Moreover, we easily confirm that $\xi_i\,g^h$ is the solution of the problem
    \begin{equation*}
    \begin{aligned}
      -\Delta (\xi_i\,g^h) + \xi_i\, g^h &= -2\,\frac{\partial g^h}{\partial x_i} + \xi_i\,\delta^h &\mbox{in}\ \Omega,\\
      \partial_n (\xi_i\,g^h) + u\,\xi_i\,g^h &=  g^h\,n_i &\mbox{on}\ \Gamma,
    \end{aligned}
  \end{equation*}
  where $n_i$ is the $i$-th component of the outer unit normal vector on $\Gamma$.
  Lemma \ref{lem:props_S}c) using the property $ \|\xi_i\,\delta^h\|_{L^2(\Omega)} \le c$,
  which follows from a simple computation, leads to
    \begin{align}\label{eq:max_est_H2_aux}
      \|\nabla^2(\xi_i\,g^h)\|_{L^2(\Omega)}
      &\le c \left(1+\|u\|_{H^{1/2}(\Gamma)}\right)
        \left(1+\|\nabla g^h\|_{L^2(\Omega)} 
        + \|g^h\|_{H^{1/2}(\Gamma)}\right)\nonumber\\
      &\le c\left(1+\|g^h\|_{H^1(\Omega)}\right).
    \end{align}
    Insertion into \eqref{eq:H2_reg_gh_1} and using Lemma \ref{lem:prop_gh}(i) leads to
    \begin{equation*}
      \|\xi_i\,\nabla^2 g^h\|_{L^2(\Omega)} \le c\,\lnh^{1/2}, \quad i=1,2.
    \end{equation*}
    An estimate for the second term on the right-hand side of \eqref{eq:max_est_weighted_H2} 
    is derived in Lemma \ref{lem:prop_gh}(ii).
  \end{proof}
  Next, we derive some error estimates for the approximation $g_h^h$ in several norms.
  \begin{lemma}\label{lem:error_gh-ghh}
    Assume that $u\in H^{1/2}(\Gamma)$. Then, there hold the error estimates
    \begin{equation*}
      \begin{aligned}
        h^{-1}\,\|g^h-g_h^h\|_{L^2(\Omega)} + \|\nabla(g^h-g_h^h)\|_{L^2(\Omega)}
        &\le c,\\
        \|\sigma\nabla^2 (g^h-g_h^h)\|_{L^2_{\textup{pw}}(\Omega)} &\le c\,\lnh^{1/2}.
      \end{aligned}
    \end{equation*}
  \end{lemma}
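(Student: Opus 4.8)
The plan is to treat the two estimates separately: the unweighted bounds follow from standard Galerkin duality, while for the weighted bound I would exploit that the discrete Hessian vanishes on affine elements, reducing everything to the weighted second-derivative bound for $g^h$ established in the preceding lemma.

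\emph{First estimate.} First I would use that $g_h^h$ is the Ritz projection of $g^h$, i.e. $a_u(g^h - g_h^h, v_h) = 0$ for all $v_h \in V_h$. Coercivity and boundedness of $a_u$ (Lemma \ref{lem:lax_milgram}) yield the quasi-best approximation property, so with the interpolation estimate \eqref{eq:int_error} and the a priori bound $\|g^h\|_{H^2(\Omega)} \le c\,h^{-1}$ from Lemma \ref{lem:prop_gh}(ii) I obtain
\begin{equation*}
  \|\nabla(g^h - g_h^h)\|_{L^2(\Omega)} \le c\,\|g^h - \Pi_h g^h\|_{H^1(\Omega)} \le c\,h\,\|g^h\|_{H^2(\Omega)} \le c.
\end{equation*}
The $L^2(\Omega)$ part then follows from the Aubin--Nitsche trick: since \ref{ass:domain1b} guarantees $H^2$-regularity of the (symmetric) adjoint problem via Lemma \ref{lem:props_S}c), duality gives $\|g^h - g_h^h\|_{L^2(\Omega)} \le c\,h\,\|\nabla(g^h - g_h^h)\|_{L^2(\Omega)} \le c\,h$, hence $h^{-1}\|g^h - g_h^h\|_{L^2(\Omega)} \le c$.

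\emph{Weighted estimate.} For the second bound I would start from the triangle inequality
\begin{equation*}
  \|\sigma\,\nabla^2(g^h - g_h^h)\|_{L^2_{\textup{pw}}(\Omega)} \le \|\sigma\,\nabla^2 g^h\|_{L^2(\Omega)} + \|\sigma\,\nabla^2 g_h^h\|_{L^2_{\textup{pw}}(\Omega)}.
\end{equation*}
The first term is precisely the quantity controlled in the previous lemma, hence bounded by $c\,\lnh^{1/2}$. For the second term I would exploit that $g_h^h$ is piecewise linear: on every affine element its classical Hessian vanishes, so only the curved boundary elements contribute to the piecewise norm, and on those everything must be estimated geometrically.

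\emph{Curved elements --- the main obstacle.} On a curved element $T$ the identity $g_h^h|_T = \hat v \circ F_T^{-1}$ with $\hat v \in \mathcal P_1(\hat T)$ shows, by the chain rule, that $\nabla^2 g_h^h$ originates solely from the non-affine part $\Phi_T$ of the element map. Using assumption \ref{ass:domain2}, namely $\|D^2\Phi_T\| \le c\,h^2$ together with $\|\tilde B_T^{-1}\| \le c\,h^{-1}$, one verifies that the factor $h^{-1}$ appearing in $D^2(F_T^{-1})$ is compensated by the factor $h$ relating $\hat\nabla\hat v$ to $\nabla g_h^h$, which yields the element-local bound $\|\nabla^2 g_h^h\|_{L^2(T)} \le c\,\|\nabla g_h^h\|_{L^2(T)}$. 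Summing over the curved elements and using $\sigma \le c$ on $\Omega$ gives
\begin{equation*}
  \|\sigma\,\nabla^2 g_h^h\|_{L^2_{\textup{pw}}(\Omega)} \le c\,\|\nabla g_h^h\|_{L^2(\Omega)} \le c\,\bigl(\|g^h\|_{H^1(\Omega)} + \|\nabla(g^h - g_h^h)\|_{L^2(\Omega)}\bigr) \le c\,\lnh^{1/2},
\end{equation*}
where the last step combines Lemma \ref{lem:prop_gh}(i) with the first estimate. Collecting both contributions proves the weighted bound. The delicate point is exactly this geometric estimate on curved elements: one must confirm, via \ref{ass:domain2}, that the isoparametric perturbation introduces no negative power of $h$, so that the logarithmic factor inherited from $g^h$ is preserved; on the affine part of the mesh the argument is exact, since there the piecewise Hessian of $g_h^h$ is identically zero.
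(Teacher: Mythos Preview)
Your proof is correct. The first estimate is handled exactly as in the paper: C\'ea-type quasi-optimality combined with the interpolation estimate and the bound $\|g^h\|_{H^2(\Omega)}\le c\,h^{-1}$, followed by the Aubin--Nitsche trick for the $L^2$ part.

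For the weighted estimate you take a slightly different route from the paper. You split by the triangle inequality into $\|\sigma\nabla^2 g^h\|$ (controlled by the preceding lemma) and $\|\sigma\nabla^2 g_h^h\|$, and then bound the discrete part directly via a chain-rule argument on curved elements, ending up with $\|\nabla^2 g_h^h\|_{L^2(T)}\le c\,\|\nabla g_h^h\|_{L^2(T)}$ and hence $c\,\|g^h\|_{H^1(\Omega)}\le c\,\lnh^{1/2}$. The paper instead keeps the difference $g^h-g_h^h$ together, pulls it back to the reference element via Bernardi's transformation lemma (using that $\hat g_h^h$ is affine on $\hat T$, so $\hat\nabla^2\hat g_h^h=0$), and obtains the local bound $\|\nabla^2(g^h-g_h^h)\|_{L^2(T)}\le c\bigl(\|\nabla^2 g^h\|_{L^2(T)}+h\,\|g^h-g_h^h\|_{H^1(T)}\bigr)$; after multiplying by $\sigma$ and summing, the first term gives $\lnh^{1/2}$ and the second is $O(h)$. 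Both arguments rest on the same geometric content of \ref{ass:domain2}---that the non-affine part of $F_T$ contributes no negative power of $h$ to second derivatives---so the difference is largely organizational: your split is more hands-on and avoids citing the transformation lemma, while the paper's version keeps the error function intact and makes the $O(h)$ gain on the lower-order term explicit.
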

  \begin{proof}
    The first estimate follows directly from the $H^1(\Omega)$-error estimate
    stated in Lemma \ref{lem:h1_l2_error} and the Aubin-Nitsche trick.
    Moreover, the a priori estimate for the $H^2(\Omega)$-norm of $g^h$
    from Lemma \ref{lem:prop_gh} has to be exploited.

    In the second estimate one observes that the discrete function $g_h^h$ would vanish
    except on curved elements (note that $g_h^h$ is affine on the reference element only, but not on $T$). With the transformation result \cite[Lemma 2.3]{Ber89} we obtain
    \begin{align*}
      \|\nabla^2(g^h - g_h^h)\|_{L^2(T)}
      &\le\,c\,|T|^{1/2}\,h^{-2}\sum_{k=0}^2 h^{4-2k}\,
        \|\hat\nabla^k(\hat g^h - \hat g_h^h)\|_{L^2(\hat T)} \\
      &\le c\left(\|\nabla^2 g^h\|_{L^2(T)} + h\,\|g^h - g_h^h\|_{H^1(T)}\right),
    \end{align*}
    where $g^h = \hat g^h\circ F_T^{-1}$, $g_h^h=\hat g_h^h\circ F_T^{-1}$.
    Taking into account $\inf_{x\in T}\sigma(x) \sim \sup_{x\in T} \sigma(x)$, which holds due to the assumed shape-regularity, and $|\sigma(x)| \le c$ for all $x\in\Omega$, we obtain
    \begin{equation*}
      \|\sigma\,\nabla^2 (g^h-g_h^h)\|_{L^2_{\textup{pw}}(\Omega)}
      \le c \left(\|\sigma\,\nabla^2 g^h\|_{L^2(\Omega)} + h\,\|g^h - g_h^h\|_{H^1(\Omega)}\right).
    \end{equation*}
    The first term has been discussed in the previous Lemma and the last term
    has been considered in the present Lemma already.
  \end{proof}

  Now we are in the position to prove Theorem \ref{thm:fe_error_linfty}.\\
  \begin{proof}
    With an inverse inequality and the H\"older inequality, the definition of $\delta^h$
    and a maximum norm estimate for the interpolant $\Pi_h$, see e.\,g.\ \cite[Theorem 4.1]{Ber89},
    we obtain
    \begin{align}\label{eq:max_est_start}
      \|y-y_h\|_{L^\infty(\Omega)} = \|y-y_h\|_{L^\infty(T^*)}
      &\le c \left(\|y-\Pi_h y\|_{L^\infty(T^*)} + |T^*|^{-1}\,\|\Pi_h y-y_h\|_{L^1(T^*)}\right)\nonumber\\
      &\le c \left(\|y-\Pi_h y\|_{L^\infty(T^*)} + (\delta^h,y-y_h)_{L^2(\Omega)}\right)\nonumber\\
      &\le c \left(h^{2-2/q}\,\|y\|_{W^{2,q}(\Omega)} + a_u(y-y_h,g^h)\right),
    \end{align}
    where $g_h^h\in V_h$ denotes the Ritz projection of $g^h$.
    
    For the latter part on the right-hand side of \eqref{eq:max_est_start}
    we get with the Galerkin orthogonality, the H\"older inequality,
    a trace theorem for the boundary integral term
    as well as $\|u\|_{L^\infty(\Gamma)} \le c$
    \begin{align}\label{eq:max_est_orthogonality}
      & a_u(y-y_h,g^h) = a_u(y-\Pi_h y,g^h- g_h^h)\nonumber\\
      &\quad \le c\,\|y-\Pi_hy\|_{W^{1,\infty}(\Omega)}\,\|g^h-g_h^h\|_{W^{1,1}(\Omega)}.
    \end{align}
    An estimate for the interpolation error is deduced 
    in \cite{Ber89}.
    The $L^1(\Omega)$-norms can be replaced by weighted $L^2(\Omega)$-norms
    involving the weighting function $\sigma$. Taking into account the
    properties \eqref{eq:props_sigma} we obtain
    \begin{equation}\label{eq:max_est_L1_est_1}
      \|g^h-g_h^h\|_{W^{1,1}(\Omega)} 
      \le c\,\lnh^{1/2}\, \left(\|\sigma\,\nabla (g^h-g_h^h)\|_{L^2(\Omega)}
        + \|g^h-g_h^h\|_{L^2(\Omega)}\right).
    \end{equation}
    In the following we will show that the expressions on the right-hand side
    of \eqref{eq:max_est_L1_est_1} are bounded by $c\,h\,\lnh^{1/2}$.
    Therefore, we apply the reverse product rule and get
    \begin{align*}
      &\|\sigma\, \nabla (g^h-g_h^h)\|_{L^2(\Omega)}^2\\
      &\qquad = (\nabla(\sigma^2\, (g^h -g_h^h)), \nabla(g^h-g_h^h))_{L^2(\Omega)}
        - ((g^h-g_h^h)\, \nabla\sigma^2, \nabla(g^h-g_h^h))_{L^2(\Omega)}.
    \end{align*}
    From this we conclude 
    \begin{align}\label{eq:max_est_milestone}
      & \Theta^2:= \|\sigma \nabla (g^h-g_h^h)\|_{L^2(\Omega)}^2  + \|g^h-g_h^h\|_{L^2(\Omega)}^2 \nonumber\\
      &\qquad \le a_u(\sigma^2(g^h-g_h^h),g^h-g_h^h) - ((g^h-g_h^h) \nabla\sigma^2, \nabla(g^h-g_h^h))_{L^2(\Omega)}.
    \end{align}
    Here, we exploited that $(u\,\sigma^2\,(g^h-g_h^h),g^h-g_h^h)_{L^2(\Gamma)}\ge 0$
    due to $u\ge u_a\ge 0$.
    Next, we introduce the abbreviation $z:=\sigma^2\,(g^h-g_h^h)$. The
    Galerkin orthogonality of $g^h-g_h^h$, Young's inequality and the trace
    theorem taking into account $|\sigma| + |\nabla \sigma|\le c$ yield
    \begin{equation*}
      \|\sigma\,v\|_{L^2(\Gamma)} \le c\left(\|\sigma\,v\|_{L^2(\Omega)} +
      \|\nabla(\sigma\,v)\|_{L^2(\Omega)}\right)
      \le c\left(\|v\|_{L^2(\Omega)} + \|\sigma\,\nabla v\|_{L^2(\Omega)}\right)
    \end{equation*}
    and thus,
    \begin{align}\label{eq:max_est_interpolation_start}
      &a_u(\sigma^2(g^h-g_h^h),g^h-g_h^h) = a_u(z-\Pi_h z,g^h-g_h^h) \nonumber\\
      &\qquad\le \frac14\, \Theta^2 + c\,\Big(\|\sigma^{-1}\,\nabla(z-\Pi_h
        z)\|_{L^2(\Omega)}^2  \nonumber\\
      &\qquad \phantom{\frac12 \Theta^2 + c\le}+\|\sigma^{-1}\,(z-\Pi_h
        z)\|_{L^2(\Omega)}^2 +
        \|\sigma^{-1}\,u\,(z-\Pi_h z)\|_{L^2(\Gamma)}^2\Big).
    \end{align}
    Next, we derive local interpolation error estimates. 
    In the following we use the notation $\underline\sigma_T:=\inf_{x\in T} \sigma(x)$ and $\overline\sigma_T :=\sup_{x\in T} \sigma(x)$. Due to the assumed shape-regularity there holds 
    $\underline\sigma_T\sim\overline\sigma_T$ for all $T\in \mathcal T_h$, and hence,
    \begin{equation*}
      \|\sigma^{-1}\,\nabla(z-\Pi_h z)\|_{L^2(T)} + h^{-1}\,\|\sigma^{-1}\,(z-\Pi_h z)\|_{L^2(T)}
      \le c\,\underline\sigma_T^{-1}\, h\,
      \|\sigma^2\,(g^h-g_h^h)\|_{H^2(\tilde T)},
    \end{equation*}
    where $\tilde T$ is the patch of all elements adjacent to $T$ (note that
    $\Pi_h$ is a quasi-interpolant).
    The Leibniz rule and the properties $|\nabla\sigma^2|\le\sigma$ and
    $|\nabla^2 \sigma^2|\le c$ imply
    \begin{equation*}
      \|\sigma^2\,(g^h-g_h^h)\|_{H^2(\tilde T)} \le \|g^h-g_h^h\|_{L^2(\tilde T)}
      + \|\sigma\,\nabla(g^h-g_h^h)\|_{L^2(\tilde T)} + \|\sigma^2\,\nabla^2
      (g^h-g_h^h)\|_{L^2(\tilde T)}.
    \end{equation*}
    Next, we combine the two estimates above and take into account the properties
    $h\,\underline\sigma_T^{-1} \le c$ and $\overline \sigma_T\sim \overline\sigma_{\tilde T}$
    which follows from the assumed quasi-uniformity.
    Summation over all $T\in\mathcal T_h$
    and an application of Lemma \ref{lem:error_gh-ghh} yields
    \begin{align}\label{eq:max_est_interpolation_global}
      &\|\sigma^{-1}\nabla(z-\Pi_h z)\|_{L^2(\Omega)} + h^{-1}\,\|\sigma^{-1}(z-\Pi_h z)\|_{L^2(\Omega)}\nonumber\\
      &\quad \le c \left(\|g^h-g_h^h\|_{L^2(\Omega)} + h\,\|\nabla(g^h-g_h^h)\|_{L^2(\Omega)} + h\,\|\sigma\,\nabla^2 (g^h-g_h^h)\|_{L^2_{\textup{pw}}(\Omega)}\right) \nonumber\\
      &\quad\le c\,h\,\lnh^{1/2}.
    \end{align} 

    It remains to discuss the third term on the right-hand side of
    \eqref{eq:max_est_interpolation_start}.
    With interpolation error estimates for $\Pi_h$ on the boundary, compare
    also \eqref{eq:int_error_boundary}, and $u\in L^\infty(\Gamma)$ we obtain
    \begin{align}\label{eq:max_est_3rd_term}
      \|\sigma^{-1}\,u\,(z-\Pi_h z)\|_{L^2(E)}
      &\le c\,h\,\underline\sigma_E^{-1} \|\nabla z\|_{L^2(\tilde E)}\nonumber\\
      &\le c\,h\left(\|g^h-g_h^h\|_{L^2(E)} +
        \|\sigma\,\nabla(g^h-g_h^h)\|_{L^2(\tilde E)}\right),
    \end{align}
    where we exploited the product rule and the property $\nabla\sigma^2\le 2\sigma\vec1$
    in the last step.
    With a trace theorem and Lemma \ref{lem:error_gh-ghh} we conclude
    \begin{equation*}
      \|g^h-g_h^h\|_{L^2(\Gamma)} \le c\,\|g^h-g_h^h\|_{H^1(\Omega)} \le c,
    \end{equation*}
    and with a multiplicative trace theorem, Young's inequality, the product
    rule and the estimates from Lemma \ref{lem:error_gh-ghh} we obtain
    \begin{align*}
      \|\sigma\,\nabla(g^h-g_h^h)\|_{L^2(\Gamma)}
      &\le c \left(\|\sigma\,\nabla(g^h-g_h^h)\|_{L^2(\Omega)} + \|\nabla
        (\sigma\,\nabla(g^h-g_h^h))\|_{L_{\textup{pw}}^2(\Omega)} \right)\\
      &\le c \left(\|\nabla(g^h-g_h^h))\|_{L^2(\Omega)} + \|\sigma\,\nabla^2(g^h-g_h^h)\|_{L_{\textup{pw}}^2(\Omega)}\right)\\
      &\le c\,\lnh^{1/2}.
    \end{align*}
    The estimate \eqref{eq:max_est_3rd_term} then simplifies to
    \begin{equation}\label{eq:max_est_3rd_term_final}
      \|\sigma^{-1}\,(z-\Pi_h z)\|_{L^2(\Gamma)}
      \le c\,h\,\lnh^{1/2}.
    \end{equation}

    Insertion of \eqref{eq:max_est_interpolation_global} and \eqref{eq:max_est_3rd_term_final} into \eqref{eq:max_est_interpolation_start} leads to the estimate
    \begin{equation}\label{eq:max_est_part1}
      a(\sigma^2(g^h-g_h^h),g^h-g_h^h) \le \frac14 \Theta^2 + c\,h^2\,\lnh.
    \end{equation}
    It remains to show an estimate for the second term on the right-hand side 
    of \eqref{eq:max_est_milestone}.
    Due to $|\nabla\sigma^2| \le 2\sigma\vec 1$, Young's inequality
    and the $L^2(\Omega)$-error estimate from Lemma \ref{lem:error_gh-ghh}
    we get
    \begin{align}\label{eq:max_est_part2}
      ((g^h-g_h^h) \nabla\sigma^2, \nabla(g^h-g_h^h))_{L^2(\Omega)}
      &\le c\,\|g^h-g_h^h\|_{L^2(\Omega)}^2 + \frac14\,\|\sigma\,\nabla(g^h-g_h^h)\|_{L^2(\Omega)}^2\nonumber\\
      &\le c\,h^2 + \frac14\,\Theta^2.
    \end{align}
    Insertion of \eqref{eq:max_est_part1} and \eqref{eq:max_est_part2} into
    \eqref{eq:max_est_milestone}
    yields
    \begin{equation}\label{eq:final_est_theta}
      \Theta^2 \le \frac12\,\Theta^2 + c\,h^2\,\lnh
    \end{equation}
    and with a kick-back-argument we conclude $\Theta^2=c\,h^2\,\lnh$.
    Finally, we collect up the previous estimates. To this end, we insert
    \eqref{eq:final_est_theta} into \eqref{eq:max_est_L1_est_1}, the resulting estimate into
    \eqref{eq:max_est_orthogonality} and this into \eqref{eq:max_est_start}.
  \end{proof}
  \section{Local estimates for the midpoint interpolant and the $L^2(\Gamma)$-projection}\label{app:midpoint}
  To the best of the author's knowledge there are no error estimates for the midpoint interpolant defined on a curved boundary available in the literature. Thus, we prove the following Lemmata which are needed in the proof of Lemma \ref{lem:postprocessing_second}.

  Consider a single boundary element $E\subset \bar T$ with corresponding element $T\in\mathcal T_h$.
  A parametrization of the boundary element is given by $E:=\{\gamma_E(\xi) := F_T(\xi,0),\ \xi\in(0,1)\}$ when assuming that the edge of $\hat T$ with endpoints $(0,0)$, $(1,0)$ is mapped onto $E$.
  In the following we denote the length of a boundary element $E\in\mathcal E_h$
  by $L_E = \int_0^1 |\dot\gamma_E(\xi)|\,\mathrm d\xi$.

  \begin{lemma}\label{lem:local_est_midpoint}
    For each function $u\colon \Gamma\to\mathbb R$ 
    there exists some piecewise constant function $R_h u\in U_h$
    satisfying the local estimates
    \begin{align*}
      \left\vert\int_E (u - R_h u)\right\vert &\le c\,h^{5/2}\left(\|\nabla u\|_{L^2(E)} + \|\nabla^2 u\|_{L^2(E)}\right),\\
      \|u-R_h u\|_{L^\infty(E)} &\le c\,h^{1-1/q}\,\|\nabla u\|_{L^q(E)},\quad
                                  q\in (1,\infty],
    \end{align*}
    for all $E\in\mathcal E_h$, provided that $u$ possesses the regularity demanded by the right-hand
    side.
  \end{lemma}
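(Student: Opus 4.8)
The plan is to define the midpoint interpolant through the parametrization: on each boundary element $E$ I set $R_h u|_E$ equal to the constant value $u(\gamma_E(1/2))$, i.e.\ the value of $u$ at the image of the parametric midpoint $\xi=1/2$. Writing $\hat u(\xi):=u(\gamma_E(\xi))$ and abbreviating the one-dimensional Jacobian by $J_E(\xi):=|\dot\gamma_E(\xi)|$, all quantities are pulled back to the reference interval $(0,1)$. Two scaling facts will be used repeatedly. First, the chain rule gives the pointwise bounds $|\hat u'(\xi)|\le|\dot\gamma_E(\xi)|\,|\nabla u|\le c\,h\,|\nabla u|$ and $|\hat u''(\xi)|\le c\,h^2(|\nabla^2 u|+|\nabla u|)$, where the estimate $|\ddot\gamma_E|\le c\,h^2$ on the second derivative of the parametrization follows from Assumption \ref{ass:domain2}. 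Second, since $J_E(\xi)\sim h$ uniformly, the change of variables $\mathrm ds=J_E\,\mathrm d\xi$ yields $\|\,\cdot\,\|_{L^p(0,1)}^p\le c\,h^{-1}\|\,\cdot\,\|_{L^p(E)}^p$ for the pulled-back integrands.

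For the $L^\infty$-estimate I would argue directly. Since $R_h u|_E$ is constant and equals $\hat u(1/2)$, the fundamental theorem of calculus gives $|\hat u(\xi)-\hat u(1/2)|\le\int_0^1|\hat u'(t)|\,\mathrm dt\le\|\hat u'\|_{L^q(0,1)}$ by H\"older's inequality on the unit interval. Inserting the pointwise bound $|\hat u'|\le c\,h\,|\nabla u|$ together with the change-of-variables scaling gives $\|\hat u'\|_{L^q(0,1)}\le c\,h\,h^{-1/q}\|\nabla u\|_{L^q(E)}=c\,h^{1-1/q}\|\nabla u\|_{L^q(E)}$, which is the claimed estimate after taking the supremum over $\xi$.

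The first, superconvergent estimate is the technical core. After pull-back one has $\int_E(u-R_h u)=\int_0^1(\hat u(\xi)-\hat u(1/2))\,J_E(\xi)\,\mathrm d\xi$; a Taylor expansion of $\hat u$ about $\xi=1/2$ splits this into a leading term $\hat u'(1/2)\int_0^1(\xi-\tfrac12)J_E(\xi)\,\mathrm d\xi$ and an integral remainder controlled by $\|\hat u''\|_{L^2(0,1)}$. The remainder is harmless: using $J_E\le c\,h$ and the scaled bound $\|\hat u''\|_{L^2(0,1)}\le c\,h^{3/2}(\|\nabla^2 u\|_{L^2(E)}+\|\nabla u\|_{L^2(E)})$ it is already of order $h^{5/2}$. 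The delicate point, and the main obstacle, is the leading term: on a straight element $J_E$ is constant and $\int_0^1(\xi-\tfrac12)\,\mathrm d\xi=0$, so it vanishes by the symmetry of the midpoint rule, but on a curved element $J_E$ is no longer constant. Here I would exploit that $J_E'(\xi)=\dot\gamma_E\cdot\ddot\gamma_E/|\dot\gamma_E|$ satisfies $|J_E'|\le|\ddot\gamma_E|\le c\,h^2$ by Assumption \ref{ass:domain2}, so that $|J_E(\xi)-J_E(\tfrac12)|\le c\,h^2$ and therefore $\int_0^1(\xi-\tfrac12)J_E(\xi)\,\mathrm d\xi=\int_0^1(\xi-\tfrac12)(J_E(\xi)-J_E(\tfrac12))\,\mathrm d\xi$ is of order $h^2$. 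Combining this with the one-dimensional Sobolev bound $|\hat u'(1/2)|\le\|\hat u'\|_{L^\infty(0,1)}\le c(\|\hat u'\|_{L^2(0,1)}+\|\hat u''\|_{L^2(0,1)})\le c\,h^{1/2}(\|\nabla u\|_{L^2(E)}+\|\nabla^2 u\|_{L^2(E)})$ produces the remaining factor $h^{1/2}\cdot h^2=h^{5/2}$ and closes the estimate. The whole argument hinges on transferring the exact moment cancellation of the midpoint rule on simplices to curved elements, which is precisely what the bound on $D^2\Phi_T$ in Assumption \ref{ass:domain2} makes possible.
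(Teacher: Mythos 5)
Your proof is correct, but it takes a genuinely different route from the paper. The paper does not use the parametric midpoint at all: it \emph{defines} the evaluation point as the arc-length barycenter
$\xi_E := L_E^{-1}\int_0^1 \xi\,|\dot\gamma_E(\xi)|\,\mathrm d\xi$, chosen precisely so that the moment cancellation $\int_E(p-R_hp)=0$ holds \emph{exactly} for every $p$ with $\hat p\in\mathcal P_1$, even on curved elements. The first estimate then follows in one stroke from the Bramble--Hilbert lemma applied to $\hat u-\hat p$, with the powers of $h$ supplied by the chain rule ($|\dot\gamma_E|\le c\,h$, $|\ddot\gamma_E|\le c\,h^2$) and the bound $\|DF_T^{-1}\|\le c\,h^{-1}$; no Taylor expansion and no analysis of a near-cancellation is needed. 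You instead keep the naive point $\xi=1/2$, accept that the first moment $\int_0^1(\xi-\tfrac12)J_E(\xi)\,\mathrm d\xi$ no longer vanishes, and show that its defect is $O(h^2)$ because $|J_E'|\le|\ddot\gamma_E|\le c\,h^2$ by \ref{ass:domain2}, pairing this with $|\hat u'(1/2)|\le c\,h^{1/2}(\|\nabla u\|_{L^2(E)}+\|\nabla^2 u\|_{L^2(E)})$ from the one-dimensional Sobolev embedding; your scalings ($J_E\sim h$, the $h^{-1/q}$ from the change of variables, the $h^{3/2}$ bound on $\|\hat u''\|_{L^2(0,1)}$) are all correct, and your remainder and $L^\infty$ arguments close as claimed. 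What each approach buys: the paper's construction shifts all the curvature difficulty into the definition of the operator, making the error proof a standard polynomial-invariance argument, at the price of a slightly unusual, parametrization-weighted evaluation point; your argument is more elementary and explicit (Taylor plus fundamental theorem of calculus) and keeps the natural midpoint, but it invokes the $O(h^2)$ smallness of $D^2\Phi_T$ in two separate places and would degrade if the mesh were only regular of order $1$, whereas the paper's exact cancellation is insensitive to that. Both constructions are point evaluations of $u$, so both inherit $R_hu\in U_h^{ad}$ for $u\in U_{ad}$ and both serve identically in the downstream arguments (the proof of Lemma~\ref{lem:int_error_Rh} and the supercloseness estimate in Lemma~\ref{lem:postprocessing_third} only use that $R_hu|_E$ is a value of $u$ on $E$ together with the two estimates proved here), so your operator is a full substitute for the paper's.
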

  \begin{proof}
    Let us first construct a suitable interpolation operator. To obtain the desired second-order accuracy we have to guarantee that the property $\int_E p = \int_E R_h p$ holds for all functions $p(\gamma_E(\xi)) = \hat p(\xi)$ with some first-order polynomial $\hat p(\xi):=a +b\,\xi$. The transformation to $\hat E:=(0,1)\times \{0\}$ yields
    \begin{equation*}
      \int_E(p(x)-R_h p) \mathrm ds_x = \int_0^1 (\hat p(\xi) - \hat p(\xi_E))
      \,|\dot\gamma_E(\xi)|\,\mathrm d\xi= b\,\int_0^1(\xi - \xi_E)\,|\dot\gamma_E(\xi)|\,\mathrm d\xi = 0.
    \end{equation*}
    The latter step holds true when choosing
    \begin{equation*}
      \xi_E := \frac{1}{L_E}\int_0^1 \xi\,|\dot\gamma_E(\xi)|\,\mathrm d\xi 
    \end{equation*}
    To this end, we define our operator by means of $R_h u|_E= \hat u(\xi_E)$.
    Obviously, the definition of $R_h$ depends on the transformations $F_T$.

    To show the interpolation error estimates we apply the property $\int_E(p-R_h p)=0$ for arbitrary
    $p\in\mathcal P_1$, the stability of the interpolant $\hat R_h \hat u|_E = \hat u(\xi_E)$,
    the properties \eqref{eq:props_trafo} of the transformation $F_T$ and the
    Bramble-Hilbert Lemma. This yields
    \begin{align*}
      \int_E(u(x)-R_h u)\mathrm ds_x
      &= \int_0^1(I-\hat R_h)(\hat u - \hat p)(\xi)\,|\dot\gamma_E(\xi)|\,\mathrm d\xi \\
      &\le c\,h\,\|\hat u - \hat p\|_{L^\infty(\hat E)}
        \le c\,h\,\|\partial_{\xi\xi}\hat u\|_{L^2(\hat E)}.
    \end{align*}
    For the transformation back to the world element $E$ we apply the chain
    rule \[\partial_{\xi\xi} \hat u(\xi) = \dot\gamma_E(\xi)^\top\, \nabla^2
    u(\gamma_E(\xi)) \,\dot\gamma_E(\xi) + \ddot\gamma_E(\xi)^\top\,\nabla u(\gamma_E(\xi))
    \]
  and the properties \eqref{eq:props_trafo} to arrive at
    \begin{align*}
      \|\partial_{\xi\xi}\hat u\|_{L^2(\hat E)}
      & \le c\,h^2 \left(\max_{\xi}|\dot\gamma_E(\xi)|^{-1} \sum_{|\alpha|=1}^2\int_0^1 (D^\alpha u(\gamma_E(\xi)))^2\,|\dot\gamma_E(\xi)|\,\mathrm d\xi \right)^{1/2} \\
      & \le c\,h^2 \left(\min_{\xi}|\dot\gamma_E(\xi)|\right)^{-1/2} \left(\|\nabla u\|_{L^2(E)} + \|\nabla^2 u\|_{L^2(E)}\right).
    \end{align*}
    Finally, the norm of $\dot\gamma_E$ can be bounded by means of
    \begin{equation}\label{eq:est_gamma_-1}
      |\dot\gamma_E(\xi)|^{-1} = |DF_T(\xi,0) (1,\,0)^\top|^{-1} \le \left(\min_{\|x\|=1} |DF_T(\xi,0) x|\right)^{-1}
      = \|DF_T(\xi,0)^{-1}\|.
    \end{equation}
    Note, that the last step is valid for the spectral norm only.
    % see https://de.wikipedia.org/wiki/Nat%C3%BCrliche_Matrixnorm#Inverse
    An application of Lemma 2.2 from \cite{Ber89} which provides $\sup_{\hat x\in \hat T}\|DF_T(\hat x)^{-1}\|\le c h^{-1}$ leads to the first estimate.

    The second estimate follows with similar arguments. For an arbitrary constant $\hat p$ we
    then obtain 
    \begin{equation*}
      \|u-R_h u\|_{L^\infty(E)} \le \|(I-\hat R_h)(\hat u-\hat p)\|_{L^\infty(\hat E)}
      \le c\, |\hat u|_{W^{1,q}(\hat E)} \le c\, h\,|E|^{-1/q}\,\|\nabla u\|_{L^q(E)}.
    \end{equation*}
  \end{proof}
  A further operator that is needed in Section \ref{sec:optimal_control} is
  the $L^2(\Gamma)$-projection onto $U_h$.
  In case of curved boundaries, this operator
  %depends on again on the
  %parametrizations $\gamma_E$ of the boundary curves $E\in\mathcal E_h$.
  %The precise definition
  reads
  \begin{equation*}
    [Q_h v]|_E := \frac1{L_E}\int_0^1 v(\gamma_E(\xi))\,|\dot\gamma_E(\xi)|\,\mathrm d\xi
  \end{equation*}
  for each $E\in\mathcal E_h$.
  Note that this definition implies the orthogonality property
  \begin{equation}\label{eq:orthogonality}
    (u-Q_h u, v_h)_{L^2(\Gamma)} = 0 \qquad\forall v_h\in U_h.
  \end{equation}
  With similar arguments as in the previous Lemma we obtain the following local estimate
  which is standard in case of a boundary consisting of straight edges.
  \begin{lemma}\label{lem:local_est_l2_proj}
    Assume that $u\in H^1(\Gamma)$. Then the estimate
    \begin{equation*}
      \|u-Q_h u\|_{L^2(E)} \le c\,h\,\|\nabla u\|_{L^2(E)}
    \end{equation*}
    is fulfilled for all $E\in\mathcal E_h$.
  \end{lemma}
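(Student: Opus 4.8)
The plan is to mimic the argument of Lemma~\ref{lem:local_est_midpoint}: transform everything to the reference edge $\hat E=(0,1)$, exploit a best-approximation property of the projection to reduce matters to an arbitrary constant, invoke the Bramble--Hilbert lemma, and finally transform back via the chain rule together with the two-sided bound on $\dot\gamma_E$. Writing $u(\gamma_E(\xi))=\hat u(\xi)$ and $\|\hat v\|_w:=\big(\int_0^1 \hat v^2\,|\dot\gamma_E|\,\mathrm d\xi\big)^{1/2}$, the first observation I would make is that the value $Q_h u|_E=L_E^{-1}\int_0^1\hat u\,|\dot\gamma_E|\,\mathrm d\xi$ is exactly the orthogonal projection of $\hat u$ onto the constants with respect to the weighted inner product with weight $|\dot\gamma_E|$. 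In particular it reproduces constants and is the $\|\cdot\|_w$-best constant approximation, so that by the change of variables $\mathrm ds_x=|\dot\gamma_E|\,\mathrm d\xi$,
\begin{equation*}
  \|u-Q_h u\|_{L^2(E)}=\|\hat u-Q_h u|_E\|_w\le \|\hat u-\hat p\|_w
\end{equation*}
for every constant $\hat p$.

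Next I would pass from the weighted norm to the plain $L^2(\hat E)$-norm using the two-sided bound $|\dot\gamma_E(\xi)|\sim h$. The upper bound $|\dot\gamma_E|\le c\,h$ follows from $\|\tilde B_T\|\le c\,h_T$ together with \eqref{eq:props_trafo}, and the lower bound $|\dot\gamma_E|\ge c^{-1}h$ from \eqref{eq:est_gamma_-1} and Lemma~2.2 of \cite{Ber89}, exactly as in the proof of Lemma~\ref{lem:local_est_midpoint}. This gives $\|\hat u-\hat p\|_w\le c\,h^{1/2}\,\|\hat u-\hat p\|_{L^2(\hat E)}$. Choosing $\hat p$ to be the (unweighted) mean of $\hat u$ over $\hat E$ and applying the Bramble--Hilbert lemma (i.e.\ the Poincaré inequality on $\hat E$) then yields $\|\hat u-\hat p\|_{L^2(\hat E)}\le c\,\|\partial_\xi\hat u\|_{L^2(\hat E)}$.

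Finally I would transform back to $E$ by the chain rule $\partial_\xi\hat u(\xi)=\dot\gamma_E(\xi)^\top\nabla u(\gamma_E(\xi))$, which gives $|\partial_\xi\hat u|^2\le|\dot\gamma_E|^2\,|\nabla u(\gamma_E)|^2$, and then use $|\dot\gamma_E|\le c\,h$ once more to extract a single factor and restore the surface measure:
\begin{equation*}
  \|\partial_\xi\hat u\|_{L^2(\hat E)}^2\le c\,h\int_0^1|\nabla u(\gamma_E)|^2\,|\dot\gamma_E|\,\mathrm d\xi=c\,h\,\|\nabla u\|_{L^2(E)}^2.
\end{equation*}
Collecting the three estimates gives $\|u-Q_h u\|_{L^2(E)}^2\le c\,h\,\|\hat u-\hat p\|_{L^2(\hat E)}^2\le c\,h\,\|\partial_\xi\hat u\|_{L^2(\hat E)}^2\le c\,h^2\,\|\nabla u\|_{L^2(E)}^2$, and taking square roots yields the claim. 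The only genuinely delicate point is the bookkeeping of the arc-length weight $|\dot\gamma_E|$ when moving between the weighted norm natural for the projection and the unweighted reference norm natural for Bramble--Hilbert; the required comparison $|\dot\gamma_E|\sim h$ is, however, already furnished by the estimates used in Lemma~\ref{lem:local_est_midpoint}, so no new geometric input is needed.
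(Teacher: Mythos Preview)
Your proof is correct and follows essentially the same approach as the paper. The paper introduces the auxiliary projection $[\tilde Q_h u]|_E:=\int_0^1 u(\gamma_E(\xi))\,\mathrm d\xi$, which is precisely your choice $\hat p=$ unweighted mean of $\hat u$, then uses the best-approximation property of $Q_h$ in $L^2(E)$, the Poincar\'e inequality on $(0,1)$, and the chain rule with $|\dot\gamma_E|\sim h$ exactly as you do; the only difference is that you spell out the weighted-to-unweighted norm comparison more explicitly.
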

  \begin{proof}
    We introduce a further projection onto $U_h$, namely
    $[\tilde Q_h u]|_E:=\int_0^1 u(\gamma_E(\xi))\,\mathrm d\xi$.
    Using \eqref{eq:orthogonality}, the transformation to the reference element as in the previous Lemma and the Poincar\'e inequality we obtain
    \begin{align*}
      \|u-Q_h u\|_{L^2(E)}^2
      &\le \|u-\tilde Q_h u\|_{L^2(E)}^2 \\
      & = \int_0^1 \left(u(\gamma_E(\xi)) - \int_0^1 u(\gamma_E(\xi'))\,\mathrm d\xi'\right)^2
        |\dot\gamma_E(\xi)|\,\mathrm d\xi \\
      &\le c\,h\,\|\partial_\xi u(\gamma_E(\cdot))\|_{L^2(0,1)}^2 \le c\,h^2\,\|\nabla u\|_{L^2(E)}^2
    \end{align*}
    where the last step is a consequence of the chain rule
    $\partial_\xi u(\gamma_E(\xi)) = \nabla u(\gamma_E(\xi))\cdot\dot\gamma_E(\xi)$ and
    $|\dot\gamma_E(\xi)|\sim h$ for all $\xi\in(0,1)$, see also \eqref{eq:est_gamma_-1}.
  \end{proof}

  We conclude this section with an estimate for an expression which is need in Lemma
  \ref{lem:postprocessing_second}.  
  \begin{lemma}\label{lem:int_error_Rh}
    Assume that the functions $u$ and $v$ belong to $\in H^1(\Gamma)$.
    Then the inequality
    \begin{equation*}
      (u-R_h u, v)_{L^2(\Gamma)}
      \le c\,h^2\,\|\nabla u\|_{L^2(\Gamma)}\,\|\nabla v\|_{L^2(\Gamma)} +
      c\,\|v\|_{L^\infty(\Gamma)} \sum_{E\in\mathcal E_h} \left\vert\int_E (u-R_h u)\right\vert 
    \end{equation*}
    is valid.      
  \end{lemma}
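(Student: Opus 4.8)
The plan is to exploit the two structural facts at our disposal: the midpoint interpolant $R_h u$ is piecewise constant and satisfies the local $L^\infty$-bound from Lemma~\ref{lem:local_est_midpoint}, while the $L^2(\Gamma)$-projection $Q_h v$ is piecewise constant with the local approximation property from Lemma~\ref{lem:local_est_l2_proj}. First I would expand the inner product elementwise, writing $(u-R_h u,v)_{L^2(\Gamma)}=\sum_{E\in\mathcal E_h}\int_E(u-R_h u)\,v$, and on each $E$ insert the piecewise constant projection $Q_h v$, splitting
\begin{equation*}
  \int_E(u-R_h u)\,v = \int_E(u-R_h u)\,(v-Q_h v) + [Q_h v]|_E\int_E(u-R_h u).
\end{equation*}
The first summand will produce the $h^2$-term and the second the sum over the local integrals.

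For the second summand, since $[Q_h v]|_E$ is a (weighted) average of $v$ over $E$, it obeys $|[Q_h v]|_E|\le\|v\|_{L^\infty(E)}\le\|v\|_{L^\infty(\Gamma)}$. Summing over all boundary elements immediately yields the bound $c\,\|v\|_{L^\infty(\Gamma)}\sum_{E\in\mathcal E_h}\big|\int_E(u-R_h u)\big|$, which is the second term on the right-hand side.

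For the first summand I would apply the Cauchy-Schwarz inequality on each $E$. The factor $\|v-Q_h v\|_{L^2(E)}\le c\,h\,\|\nabla v\|_{L^2(E)}$ is exactly Lemma~\ref{lem:local_est_l2_proj}. For the remaining factor I would pass from the $L^\infty$-estimate in Lemma~\ref{lem:local_est_midpoint} (taken with $q=2$) to an $L^2$-estimate: using $\|u-R_h u\|_{L^2(E)}\le|E|^{1/2}\,\|u-R_h u\|_{L^\infty(E)}$ together with $|E|\sim h$ (a consequence of shape-regularity and quasi-uniformity) gives $\|u-R_h u\|_{L^2(E)}\le c\,h^{1/2}\cdot h^{1/2}\,\|\nabla u\|_{L^2(E)}=c\,h\,\|\nabla u\|_{L^2(E)}$. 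Hence on each element $\big|\int_E(u-R_h u)(v-Q_h v)\big|\le c\,h^2\,\|\nabla u\|_{L^2(E)}\,\|\nabla v\|_{L^2(E)}$. A discrete Cauchy-Schwarz inequality in the summation over $E$ then collects these local contributions into $c\,h^2\,\|\nabla u\|_{L^2(\Gamma)}\,\|\nabla v\|_{L^2(\Gamma)}$, completing the estimate.

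I do not expect a genuine obstacle here: the argument is the standard superconvergence splitting into a ``mean'' part and a ``fluctuation'' part. The only point requiring mild care is deriving the local $L^2$-bound for $u-R_h u$ from the available $L^\infty$-bound and confirming that the Poincar\'e/projection constant in Lemma~\ref{lem:local_est_l2_proj} is $h$-independent on curved boundary elements; both follow from the shape-regularity assumptions and the transformation properties \eqref{eq:props_trafo} already invoked in the preceding lemmata.
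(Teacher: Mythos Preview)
Your proof is correct and follows essentially the same ``mean plus fluctuation'' strategy as the paper, but the splitting is mirrored: you insert $Q_h$ into the \emph{second} argument, writing
\[
  (u-R_h u,v)_{L^2(\Gamma)} = (u-R_h u,\,v-Q_h v)_{L^2(\Gamma)} + \sum_E [Q_h v]|_E\int_E(u-R_h u),
\]
whereas the paper inserts $Q_h$ into the \emph{first} argument via the orthogonality~\eqref{eq:orthogonality}, obtaining
\[
  (u-R_h u,v)_{L^2(\Gamma)} = (u-Q_h u,\,v-Q_h v)_{L^2(\Gamma)} + (Q_h(u-R_h u),\,v)_{L^2(\Gamma)}.
\]
The second terms are handled identically in both versions. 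The difference shows up in the first term: the paper bounds $\|u-Q_h u\|_{L^2(E)}$ directly by Lemma~\ref{lem:local_est_l2_proj} (applied symmetrically to $u$ and $v$), while you need $\|u-R_h u\|_{L^2(E)}\le c\,h\,\|\nabla u\|_{L^2(E)}$ and obtain it by the detour $\|u-R_h u\|_{L^2(E)}\le |E|^{1/2}\|u-R_h u\|_{L^\infty(E)}$ together with the $q=2$ case of Lemma~\ref{lem:local_est_midpoint}. Both routes are valid; the paper's is marginally more economical since it avoids invoking the $L^\infty$-estimate for $R_h$ altogether.
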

  \begin{proof}
    First, we split the term under consideration using the $L^2(\Gamma)$-projection onto
    $U_h$ and obtain
    \begin{equation*}
      (u-R_h u, v)_{L^2(\Gamma)} = (u-Q_h u, v - Q_h v)_{L^2(\Gamma)} + (Q_h (u-R_h u), v)_{L^2(\Gamma)}.
    \end{equation*} 
    The first term on the right-hand side can be treated with the local estimate from Lemma \ref{lem:local_est_l2_proj} which yields
    \begin{equation*}
      (u-Q_h u, v - Q_h v)_{L^2(\Gamma)}
      \le c\,h^2\, \|\nabla u\|_{L^2(\Gamma)}\,\|\nabla v\|_{L^2(\Gamma)}.
    \end{equation*}
    For the second term we exploit the definition of $Q_h$ and $R_h$ on the reference element.
    For each $E\in\mathcal E_h$ we then obtain
    \begin{align*}
      \|Q_h(u-R_h u)\|_{L^1(E)}
      &= \int_0^1 \left\vert  \frac1{L_E}\int_0^1(u(\gamma_E(\xi)) - u(\gamma_E(\xi_E)))\,
        |\dot\gamma_E(\xi)|\,\mathrm d\xi\right\vert\,|\dot\gamma_E(\xi')|\,\mathrm d\xi' \\
      &= \left\vert\int_0^1(u(\gamma_E(\xi))-u(\gamma_E(\xi_E)))\,|\dot\gamma_E(\xi)|\,\mathrm d\xi\right\vert
        = \left\vert\int_E (u-R_h u)\right\vert,
    \end{align*}
    where we used $\int_0^1|\dot\gamma_E(\xi')|\,\mathrm d\xi' = L_E$ in the second step.
    Consequently, we obtain
    \begin{equation*}
      (Q_h (u-R_h u), v)_{L^2(\Gamma)} \le c\,\|v\|_{L^\infty(\Gamma)}\sum_{E\in\mathcal E_h} \left\vert\int_E (u-R_h u)\right\vert 
    \end{equation*}
    and conclude the assertion.
  \end{proof}
  
\end{appendix}

\printbibliography

\end{document}